\newtheorem{thm}{Theorem}[section]
\newtheorem{prop}[thm]{Proposition}
\newtheorem{lemma}[thm]{Lemma}
\newtheorem{cor}[thm]{Corollary}
\newtheorem{conj}[thm]{Conjecture}
\newtheorem{question}[thm]{Question}
\theoremstyle{definition}
\newtheorem{example}[thm]{Example}
\theoremstyle{remark}
\newtheorem{rmk}[thm]{Remark}
\numberwithin{equation}{section}
\newcommand{\Q}{\mathbb Q}
\newcommand{\F}{\mathbb F}
\newcommand{\Z}{\mathbb Z}
\newcommand{\cl}{\overline}
\newcommand{\set}[1]{\left\{#1\right\}}
\renewcommand{\phi}{\varphi}
\newcommand{\on}[1]{\operatorname{#1}}
\newcommand{\ang}[1]{\langle{#1}\rangle}
\title{On the Massey Vanishing Conjecture and Formal Hilbert 90}
\address{Department of Mathematics\\
	University of California\\
	Los Angeles, CA 90095\\ 
	United States of America}
\author{Alexander Merkurjev}
\email{merkurev@math.ucla.edu}
\author{Federico Scavia}
\email{scavia@math.ucla.edu}
\date{August 2023}
\subjclass[2020]{12G05; 55S30, 20E18, 20J06}
\begin{document}

	\begin{abstract}
	Let $p$ be a prime number, let $G$ be a profinite group, let $\theta\colon G\to \Z_p^{\times}$ be a continuous character, and for all $n\geq 1$ write $\Z/p^n\Z(1)$ for the twist of $\Z/p^n\Z$ by the $G$-action. Suppose that $(G,\theta)$ satisfies a formal version of Hilbert's Theorem 90: for all open subgroups $H\subset G$ and every $n\geq 1$, the map $H^1(H,\Z/p^n\Z(1))\to H^1(H,\Z/p\Z(1))$ is surjective. We show that the Massey Vanishing Conjecture for triple Massey products and some degenerate fourfold Massey products holds for $G$. A key step in our proof is the construction of a Hilbert 90 module for $(G,\theta)$: a discrete $G$-module $M$ which plays the role of the Galois module $F_{\text{sep}}^\times$ for the absolute Galois group of a field $F$ of characteristic different from $p$.
	\end{abstract}
	
	\maketitle

	\section{Introduction}
Let $G$ be a profinite group and $p$ be a prime number. The cohomology ring $H^*(G,\Z/p\Z)$ carries a multiplication, called cup product, as well as higher-order operations, called Massey products. When $G=\Gamma_F$ is the absolute Galois group of a field $F$ containing a primitive $p$-th root of unity, the Norm-Residue Isomorphism Theorem of Voevodsky and Rost \cite{haesemeyer2019norm} describes the cup product on $H^*(\Gamma_F,\Z/p\Z)$ by giving a presentation by generators and relations.

If $\chi_1,\dots,\chi_n\in H^1(G,\Z/p\Z)$, the Massey product of $\chi_1,\dots,\chi_n$ is a certain subset $\ang{\chi_1,\dots,\chi_n}\subset H^2(G,\Z/p\Z)$; see \Cref{massey-section} for the definition. We say that $\ang{\chi_1,\dots,\chi_n}$ is defined if it is not empty, and that it vanishes if it contains $0$. Massey products are the object of the Massey Vanishing Conjecture, due to Min\'{a}\v{c}--T\^{a}n \cite{minac2017triple} and inspired by earlier work of Hopkins--Wickelgren \cite{hopkins2015splitting}.

\begin{conj}[Min\'{a}\v{c}--T\^{a}n]\label{massey-conj}
		For every field $F$, every prime $p$, every $n\geq 3$ and all $\chi_1,\dots,\chi_n\in H^1(\Gamma_F,\Z/p\Z)$, if the Massey product $\ang{\chi_1,\dots,\chi_n}\subset H^2(\Gamma_F,\Z/p\Z)$ is defined, then it vanishes.
\end{conj}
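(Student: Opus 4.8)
The plan is to convert the conjecture into a lifting problem for unipotent Galois representations and then attack that problem by induction on $n$, using the structural constraints that the Norm-Residue Isomorphism Theorem and Hilbert's Theorem 90 place on $\Gamma_F$. Fix $p$ and write $U = U_{n+1}(\F_p)$ for the group of upper unitriangular $(n+1)\times(n+1)$ matrices over $\F_p$, with center $Z\cong\F_p$ (the top-right entry) and quotient $\cl{U} = U/Z$. By Dwyer's dictionary between Massey products and unipotent representations, $\ang{\chi_1,\dots,\chi_n}$ is \emph{defined} exactly when there is a continuous homomorphism $\cl\rho\colon\Gamma_F\to\cl{U}$ whose superdiagonal entries are $\chi_1,\dots,\chi_n$, and it \emph{vanishes} exactly when $\cl\rho$ lifts to a homomorphism $\rho\colon\Gamma_F\to U$. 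So the conjecture is equivalent to the statement that every such $\cl\rho$ lifts. Since $1\to Z\to U\to\cl{U}\to 1$ is a central extension with $Z\cong\F_p$, this is a central embedding problem whose solvability is governed by a single obstruction class $\omega(\cl\rho)\in H^2(\Gamma_F,\F_p)$, and the entire conjecture comes down to proving $\omega(\cl\rho)=0$ whenever $\cl\rho$ exists.

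First I would organize the argument as an induction on $n$. Composing $\cl\rho$ with the two projections $\cl{U}_{n+1}\to\cl{U}_n$ onto the top-left and bottom-right $n\times n$ blocks produces homomorphisms realizing the consecutive sub-products $\ang{\chi_1,\dots,\chi_{n-1}}$ and $\ang{\chi_2,\dots,\chi_n}$. By the inductive hypothesis these lift to $U_n$, so I obtain a partial lift along each block and may normalize the accompanying defining system so that the two block lifts agree wherever their ranges overlap. The obstruction to assembling them into a single lift to $U_{n+1}$ then collapses to an explicit $2$-cocycle: an alternating sum of cup products $\chi_i\cup a_{j,k}$ of the superdiagonal characters with the lower entries supplied by the block lifts. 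The remaining task is to show that this concrete representative of $\omega(\cl\rho)$ is a coboundary.

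Two structural inputs would be used to carry out that splitting. From the Norm-Residue Isomorphism Theorem one knows (for $\mu_p\subset F$) that $H^*(\Gamma_F,\Z/p\Z)$ is generated in degree $1$ with relations generated in degree $2$; the quadratic relations, together with the relations among $\chi_1,\dots,\chi_n$ that the mere existence of $\cl\rho$ forces, should let me rewrite the cup-product expression for $\omega(\cl\rho)$ as a combination of terms that the relations annihilate. From Hilbert's Theorem 90 — in the formal guise of this paper, the surjectivity of $H^1(H,\Z/p^n\Z(1))\to H^1(H,\Z/p\Z(1))$, and concretely through the Hilbert 90 module $M$ playing the role of $F_{\mathrm{sep}}^\times$ — I would extract the freedom to adjust the off-diagonal entries of $\cl\rho$ by classes lifting to higher torsion. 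It is this flexibility that provides the $1$-cochains witnessing that the residual part of $\omega(\cl\rho)$ bounds.

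The step I expect to be the genuine obstacle is the last one: proving that the normalized obstruction $\omega(\cl\rho)$ really does lie in the subgroup of $H^2(\Gamma_F,\F_p)$ that the Norm-Residue relations and Hilbert 90 can jointly kill. For $n=3$ the obstruction degenerates to a single symbol whose triviality follows directly from the defined hypothesis, and in the degenerate $n=4$ configurations enough block entries vanish that the same mechanism closes the induction — this is exactly the range where the formal Hilbert 90 axiom suffices. For general $n$ the obstruction is a genuinely higher-order combination, and splitting it appears to demand an additional principle beyond purely local cochain manipulation: over number fields one can invoke Poitou--Tate duality and a local-global analysis in the spirit of Harpaz--Wittenberg, and more generally one may seek a rational point on a Hopkins--Wickelgren splitting variety attached to the embedding problem. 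Supplying such an input over an arbitrary field is the crux that the present framework does not yet resolve, and surmounting it is what a complete proof of the conjecture would require.
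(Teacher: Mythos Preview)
The statement is \Cref{massey-conj}, an open conjecture that the paper does not prove; the paper's results are the special cases $n=3$ (\Cref{main-h90}) and the degenerate mod-$2$ fourfold case (\Cref{main-h90-degenerate}). So there is no proof in the paper to compare against, and your proposal is not a proof either --- you acknowledge as much in your final paragraph.

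Beyond that, the inductive scaffolding you set up carries less weight than you suggest. You claim to use the inductive hypothesis to lift the two block projections of $\cl\rho$ from $\cl{U}_n$ to $U_n$, but no hypothesis is needed: the center $Z_{n+1}$ lives in the $(1,n+1)$ entry and is killed by either block projection $U_{n+1}\to U_n$, so $\cl{U}_{n+1}\to U_n$ already exists as a group homomorphism. Thus any $\cl\rho\colon G\to\cl{U}_{n+1}$ automatically yields honest maps $G\to U_n$ on both blocks, and the induction transmits nothing. The entire problem is the single obstruction class $\omega(\cl\rho)\in H^2(G,\Z/p\Z)$, and your outline offers no mechanism that simplifies it as $n$ grows. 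For $n=3$ the paper kills $\omega$ not by invoking Milnor $K$-theory relations but by an explicit construction inside a Hilbert 90 module (\Cref{cup-norm}, \Cref{omega-lemma}, \Cref{cyclicext-module}) that produces correction terms $e,f\in M^G$ with $\omega=\chi_a\cup\chi_f+\chi_e\cup\chi_c$; this is then absorbed into the defining system. That argument is specific to $n=3$ and does not visibly generalize, which is why the conjecture remains open.
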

\Cref{massey-conj} is motivated by the Profinite Inverse Galois Problem, that is, the open problem of determining which profinite groups are absolute Galois groups of fields. Indeed, the Conjecture predicts a new restriction for a profinite group to be an absolute Galois group.

\Cref{massey-conj} has attracted a lot of attention in recent years, and has been established in a number of cases. It is known when $n=3$, by Hopkins--Wickelgren \cite{hopkins2015splitting} when $F$ is a number field and $p=2$, and by work of Efrat--Matzri and Min\'{a}\v{c}--T\^{a}n \cite{matzri2014triple, efrat2017triple, minac2016triple} in general. It is also known when $n=4$ and $p=2$: when $F$ is a number field this is due to Guillot--Min\'{a}\v{c}--Topaz--Wittenberg \cite{guillot2018fourfold}, and when $F$ is arbitrary to \cite{merkurjev2023massey}. When $F$ is a number field, the conjecture has been proved for all $n\geq 3$ and all primes $p$ by Harpaz--Wittenberg \cite{harpaz2019massey}.

In this paper, we investigate the Massey Vanishing Conjecture for all profinite groups which satisfy a formal version of Hilbert's Theorem 90. As we now explain, this class of profinite groups contains all absolute Galois groups but is substantially larger.

Let $p$ be a prime number, and denote by $\Z_p$ the topological ring of $p$-adic integers, with the $p$-adic topology. Let $(G,\theta)$ be a $p$-oriented profinite group, that is, a profinite group $G$ equipped with a continuous group homomorphism $\theta\colon G\to \Z_p^{\times}$. For all $n\geq 1$, we write $\Z/p^n\Z(1)$ for the $G$-module with underlying abelian group $\Z/p^n\Z$ and where $G$ acts by multiplication via $\theta$. We say that $(G,\theta)$ satisfies formal Hilbert 90 if, for all $n\geq 1$ and all open subgroups $H\subset G$, the reduction map
\[H^1(H,\Z/p^n\Z(1))\to H^1(H,\Z/p\Z(1))\]
is surjective. In the literature, the $p$-oriented profinite groups $(G,\theta)$ satisfying formal Hilbert 90 are sometimes called $1$-cyclotomic; see e.g. \cite[Definition 4.1]{declercq2022lifting} or \cite[Definition 2.9(ii)]{blumer2023groups}.

It is a consequence of Hilbert's Theorem 90 that for every field $F$ of characteristic different from $p$, letting $\Gamma_F$ be the absolute Galois group of $F$ and $\theta_F\colon \Gamma_F\to \Z_p^\times$ be the canonical $p$-orientation (the cyclotomic character), the pair $(\Gamma_F,\theta_F)$ is a $p$-oriented profinite group which satisfies formal Hilbert 90; see \Cref{field-formal-h90}. Other profinite groups admitting a $p$-orientation satisfying formal Hilbert 90 are:  \'etale fundamental groups of semilocal $\Z[1/p]$-schemes, affine $\F_p$-schemes, and smooth curves over algebraically closed fields; see \cite[Propositions 4.9, 4.10, 4.11]{declercq2022lifting}. 

\begin{question}\label{massey-question}
		Let $p$ be a prime number, let $n\geq 3$ be an integer, and let $G$ be a profinite group. Suppose that there exists a $p$-orientation $\theta$ of $G$ such that $(G,\theta)$ satisfies formal Hilbert 90. Is it true that, for all $\chi_1,\dots,\chi_n\in H^1(G,\Z/p\Z)$, if the Massey product $\ang{\chi_1,\dots,\chi_n}\subset H^2(G,\Z/p\Z)$ is defined, then it vanishes?
\end{question}
  
 We show that the answer to \Cref{massey-question} is affirmative when $n=3$.
 
    \begin{thm}\label{main-h90}
    Let $p$ be a prime number, let $(G,\theta)$ be a $p$-oriented profinite group satisfying formal Hilbert 90 and let $\chi_1,\chi_2,\chi_3\in H^1(G,\Z/p\Z)$. The following are equivalent:
    \begin{enumerate}
        \item $\chi_1\cup\chi_2=\chi_2\cup\chi_3=0$ in $H^2(G,\Z/p\Z)$;
        \item the mod $p$ Massey product $\ang{\chi_1,\chi_2,\chi_3}$ is defined;
        \item the mod $p$ Massey product $\ang{\chi_1,\chi_2,\chi_3}$ vanishes.
    \end{enumerate}
    \end{thm}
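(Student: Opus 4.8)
The plan is to prove the equivalence of (1), (2), (3) via the implications (3) $\Rightarrow$ (2) $\Rightarrow$ (1) $\Rightarrow$ (3), where only the last is substantive. The implication (3) $\Rightarrow$ (2) is immediate from the definitions: a Massey product that vanishes contains $0$ and in particular is nonempty, hence defined. For (2) $\Rightarrow$ (1), I would recall the standard fact that if $\ang{\chi_1,\chi_2,\chi_3}$ is defined then there exist cochains $a_{12},a_{23}$ with $da_{12}=\chi_1\cup\chi_2$ and $da_{23}=\chi_2\cup\chi_3$ (using a suitable normalized cochain model, e.g. the inhomogeneous bar complex, and identifying $\chi_i$ with $1$-cocycles); the existence of $a_{12}$ forces $\chi_1\cup\chi_2=0$ in $H^2$, and similarly $\chi_2\cup\chi_3=0$. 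So the heart of the matter is (1) $\Rightarrow$ (3): assuming $\chi_1\cup\chi_2=\chi_2\cup\chi_3=0$, produce a defining system whose value is $0$.

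For the key implication I would exploit the Hilbert 90 module $M$ for $(G,\theta)$ whose existence is advertised in the abstract: a discrete $G$-module that behaves like $F_{\mathrm{sep}}^\times$, so that in particular $H^1(G,M)=H^1(H,M)=0$ for all open $H$ (by the module's defining property mimicking Hilbert 90), there is a Kummer-type short exact sequence $0\to \Z/p\Z(1)\to M\xrightarrow{p} M\to 0$, and the connecting map identifies $H^1(G,\Z/p\Z(1))\cong M(G)/p$ where $M(G)=M^G$. The strategy is then the cohomological/arithmetic translation of the field-theoretic proof of the $n=3$ Massey vanishing by Matzri, Efrat--Matzri, and Min\'{a}\v{c}--T\^{a}n: one reduces the vanishing of the triple Massey product to solving an embedding problem, namely lifting the homomorphism $(\chi_1,\chi_3)\colon G\to (\Z/p\Z)^2$ to the unipotent upper-triangular group $U_4(\Z/p\Z)$ of $4\times 4$ matrices modulo its center. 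Here the character $\chi_2$ controls the middle entry and the hypotheses $\chi_1\cup\chi_2=\chi_2\cup\chi_3=0$ are exactly the obstructions to partially filling in the relevant entries of the matrix; the remaining obstruction to completing the lift to $U_4$ lives in $H^2(G,\Z/p\Z)$ and, crucially, one shows it can be killed using the vanishing of $H^1$ with coefficients in $M$ (formal Hilbert 90) to adjust the partial defining system. Concretely, I expect the argument to run through the ``equivalent of fields'' results: formal Hilbert 90 lets one replace every appeal to $F^\times$, $F(\sqrt[p]{a})$, corestriction/transfer, and the surjectivity of $K_2^M(F)/p$-type statements by their module-theoretic analogues over $M(G)$, exactly as in the Merkurjev--Scavia framework the paper is building.

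The main obstacle, and where the bulk of the work lies, is establishing that the standard field-theoretic proof of triple Massey vanishing genuinely only uses consequences of Hilbert's Theorem 90 (together with formal properties of Galois cohomology that hold for any profinite group), and not any deeper arithmetic input specific to absolute Galois groups. Several steps in the Efrat--Matzri/Min\'{a}\v{c}--T\^{a}n proof are phrased in terms of elements of $F^\times$ and finite extensions of $F$, and transcribing them requires the Hilbert 90 module $M$ to support the analogues of: (i) taking ``$p$-th roots'' inside $M$ along classes in $H^1(H,\Z/p\Z(1))$; (ii) a well-behaved transfer/corestriction for open subgroups compatible with the module structure; and (iii) the vanishing of certain ``norm'' obstructions, which is precisely what formal Hilbert 90 (surjectivity of $H^1(H,\Z/p^n\Z(1))\to H^1(H,\Z/p\Z(1))$ for all $n$ and all open $H$) provides. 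Once the dictionary is in place, the proof of (1) $\Rightarrow$ (3) follows the classical template: pick lifts realizing $\chi_1\cup\chi_2=0$ and $\chi_2\cup\chi_3=0$, assemble them into a partial defining system, identify the top obstruction as a class which becomes trivial after a controlled modification permitted by the vanishing of $H^1$ with $M$-coefficients, and conclude that $0\in\ang{\chi_1,\chi_2,\chi_3}$. I would expect the write-up to isolate the module-theoretic input as a separate lemma (``formal Hilbert 90 $\Rightarrow$ the $M$-analogue of Hilbert 90 for all open subgroups'') and then give the triple-product argument essentially verbatim from the field case with $F^\times$ replaced by $M(G)$.
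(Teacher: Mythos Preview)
Your plan is correct and follows essentially the same route as the paper: invoke the existence of a Hilbert 90 module $M$ (the paper's \Cref{hilbert-90-equiv}), use Dwyer's criterion to recast vanishing as a lifting problem $G\to U_4$, and then transcribe the field-theoretic triple-Massey argument with $F^\times$ replaced by $M^G$. The details you defer are exactly what the paper supplies: a preliminary reduction to the cyclotomic case via the prime-to-$p$ subgroup $G'=\theta^{-1}(1+p\Z_p)$, the exact sequence $H^1(H,\Z/p\Z)\xrightarrow{\on{Cor}}H^1(G,\Z/p\Z)\xrightarrow{\cup\chi}H^2(G,\Z/p\Z)\xrightarrow{\on{Res}}H^2(H,\Z/p\Z)$ (this is where $H^1(H,M)=0$ actually enters), and---in the case $\chi_1,\chi_3$ linearly independent---an explicit construction of an element $e\in M^G$ using Hilbert 90 for the degree-$p$ cyclic ``extension'' $G_a\cap G_c\subset G_{a+c}$, after which the obstruction decomposes as $\chi_a\cup\chi_f+\chi_e\cup\chi_c$ and is absorbed into the defining system. (Minor slip: your lifting problem should be for $(\chi_1,\chi_2,\chi_3)\colon G\to(\Z/p\Z)^3$, not $(\chi_1,\chi_3)$.)
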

\Cref{main-h90} confirms a conjecture of Quadrelli \cite[Conjecture 1.5]{quadrelli2021chasing}; see \Cref{comments2}.

We also show that the answer to \Cref{massey-question} is affirmative for certain degenerate mod $2$ fourfold Massey products of the form $\ang{\chi_1,\chi_2,\chi_3,\chi_1}$. The importance of these Massey products stems from the fact that they need not be defined even when $\chi_1\cup\chi_2=\chi_2\cup\chi_3=\chi_3\cup\chi_1=0$ in $H^2(G,\Z/2\Z)$; see \cite[Example A.15]{guillot2018fourfold} and \cite[Theorem 1.4]{merkurjev2022degenerate}.

    \begin{thm}\label{main-h90-degenerate}
    Let $(G,\theta)$ be a $2$-oriented profinite group satisfying formal Hilbert 90 and let $\chi_1,\chi_2,\chi_3\in H^1(G,\Z/2\Z)$. If the mod $2$ Massey product $\ang{\chi_1,\chi_2,\chi_3,\chi_1}$ is defined, then it vanishes.    
    \end{thm}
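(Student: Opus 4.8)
The strategy is to reduce the vanishing of the degenerate fourfold Massey product $\langle\chi_1,\chi_2,\chi_3,\chi_1\rangle$ to the threefold case already handled by \Cref{main-h90}, using the Hilbert 90 module $M$ promised in the abstract as a substitute for $F_{\mathrm{sep}}^\times$. Concretely, I would first invoke the construction of a Hilbert 90 module $M$ for $(G,\theta)$, so that $H^1(H,M/p)\cong H^1(H,\Z/2\Z(1))\cong H^1(H,\Z/2\Z)$ behaves exactly as the Kummer sequence does for a field, and the analogue of the ``Kummer'' short exact sequence $0\to\Z/2\Z(1)\to M\xrightarrow{2}M\to 0$ and more generally $0\to M/2^{n-1}\to M/2^n\to\Z/2\Z\to 0$ holds. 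This lets one transport to the group-cohomological setting the field-theoretic machinery for degenerate fourfold Massey products developed in \cite{merkurjev2022degenerate}.

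**Main steps.** \emph{Step 1: Set up unipotent representations.} Recall from \Cref{massey-section} that $\langle\chi_1,\chi_2,\chi_3,\chi_1\rangle$ being defined means the homomorphism $(\chi_1,\chi_2,\chi_3,\chi_1)\colon G\to(\Z/2\Z)^4$ lifts, after passing to a quotient by the center, to a representation into the group $\mathbb U_5(\Z/2\Z)$ of upper-triangular unipotent $5\times 5$ matrices over $\Z/2\Z$ with the $(1,5)$ entry killed; the product vanishes if such a lift exists into all of $\mathbb U_5(\Z/2\Z)$. \emph{Step 2: Exploit the repetition $\chi_4=\chi_1$.} The key structural feature, as in \cite{merkurjev2022degenerate}, is that the $(1,5)$-corner obstruction can be analyzed by splitting the unipotent group into the ``lower'' $4\times4$ block realizing $\langle\chi_1,\chi_2,\chi_3\rangle$ and the ``upper'' $4\times4$ block realizing $\langle\chi_2,\chi_3,\chi_1\rangle$. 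By \Cref{main-h90}, if $\langle\chi_1,\chi_2,\chi_3,\chi_1\rangle$ is defined then in particular $\chi_1\cup\chi_2=\chi_2\cup\chi_3=\chi_3\cup\chi_1=0$, so both $\langle\chi_1,\chi_2,\chi_3\rangle$ and $\langle\chi_2,\chi_3,\chi_1\rangle$ are defined and therefore vanish. \emph{Step 3: Glue the two threefold splittings.} Choose lifts $G\to\mathbb U_4(\Z/2\Z)$ for each of the two triple products that vanish. The obstruction to assembling these into a $\mathbb U_5$-lift with vanishing $(1,5)$-corner is a class in $H^2(G,\Z/2\Z)$ which, using the defining datum that the fourfold product \emph{is} already defined, one shows lies in the image of a cup product / coboundary that can be killed using formal Hilbert 90 applied to the module $M$ — this is where one needs more than the mod $p$ cohomology and genuinely uses the surjectivity $H^1(H,M/2^n)\twoheadrightarrow H^1(H,M/2)$. \emph{Step 4: Conclude.} Solving for the corner entry using the Hilbert 90 property of $M$ produces a lift to $\mathbb U_5(\Z/2\Z)$, i.e. $0\in\langle\chi_1,\chi_2,\chi_3,\chi_1\rangle$.

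**Main obstacle.** The principal difficulty is Step 3: making the gluing of the two triple-Massey splittings canonical enough that the resulting corner obstruction can be identified with something killable by formal Hilbert 90. Over a field this is handled in \cite{merkurjev2022degenerate} by an explicit manipulation of symbols and the compatibility of the Kummer sequence with cup products; in the abstract setting one must verify that every such manipulation only uses the formal properties encoded by the Hilbert 90 module $M$ — namely the exactness of $0\to M/2^{n-1}\to M/2^n\to\Z/2\Z\to0$, the identification of the connecting maps with cup product by the Bockstein-type class attached to $\theta$, and the surjectivity in degree $1$ for all open subgroups. A secondary technical point is bookkeeping with the center of $\mathbb U_5$ and the passage between ``defined'' (a lift to the quotient $\mathbb U_5/Z$) and ``vanishes'' (a lift to $\mathbb U_5$), which in the degenerate case is more delicate than for non-repeated entries because the two copies of $\chi_1$ force extra compatibility constraints among the chosen cochains; I expect these to be dispatched by the same cocycle calculus used in \cite{merkurjev2022degenerate} once the module-theoretic input of the previous steps is in place.
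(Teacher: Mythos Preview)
Your outline has a genuine gap at Step~3, and Step~2 is largely a red herring.

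On Step~2: the invocation of \Cref{main-h90} is unnecessary. If $\langle\chi_1,\chi_2,\chi_3,\chi_1\rangle$ is defined, a lift $G\to\overline{U}_5$ already restricts to honest lifts $G\to U_4$ on both the upper-left and lower-right $4\times 4$ blocks, so $\langle\chi_1,\chi_2,\chi_3\rangle$ and $\langle\chi_2,\chi_3,\chi_1\rangle$ vanish automatically. The threefold theorem is never used in the paper's proof of \Cref{main-h90-degenerate}.

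On Step~3: this is the entire content of the theorem, and you have not supplied a mechanism. Saying that the corner obstruction ``lies in the image of a cup product / coboundary that can be killed using formal Hilbert 90'' is not a plan; it is a restatement of the goal. Moreover, the template you invoke is misleading: as the introduction explicitly records, \cite{merkurjev2022degenerate} proved the field case using Albert forms and the theory of quaternion algebras split by biquadratic extensions, \emph{not} by manipulations that reduce to Hilbert~90. The whole point of \Cref{main-h90-degenerate} is that a new argument is needed; porting the old one verbatim and hoping the non-Hilbert-90 ingredients evaporate will not work.

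The paper's route is structurally different from your two-$U_4$ gluing. It factors $U_5\twoheadrightarrow U_3\times U_3$ with abelian kernel $P$, and (\Cref{u5-criterion}) shows that, writing $H=\operatorname{Ker}(\chi_1)$, the fourfold product is \emph{defined} iff there exist $\varphi,\psi\in H^1(H,\Z/2\Z)$ with $\operatorname{Cor}^G_H(\varphi)=\chi_2$, $\operatorname{Cor}^G_H(\psi)=\chi_3$, $\varphi\cup\operatorname{Res}^G_H(\chi_3)=0$, and $\varphi\cup\psi$ lying in the image of restriction from $G$; it \emph{vanishes} iff one can moreover achieve $\varphi\cup\psi=0$. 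The passage from the former to the latter is carried out by an exact sequence (\Cref{pi-lambda-res})
\[
M^{(G_a)_{\alpha}}\oplus M^{G_b}\xrightarrow{\ \pi\ } M^{G_a}\xrightarrow{\ \lambda\ } H^2(G,M)\xrightarrow{\ \operatorname{Res}\ } H^2((G_b)_{\beta},M),
\]
obtained by applying $\operatorname{Hom}_{\Z}(-,M)$ to an explicit four-term resolution of $\Z$ as a $\Z[U_3]$-module and using that $H^1$ of $M$ vanishes on every open subgroup. Concretely, ``$\varphi\cup\psi$ comes from $G$'' forces $\lambda(\delta)=0$ for the element $\delta$ underlying $\psi$, whence $\delta$ lies in the image of $\pi$, and subtracting the norm-from-$G_b$ component yields a new $\psi'$ with $\varphi\cup\psi'=0$ and the same corestriction. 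This resolution-based step is the missing idea in your proposal; in particular, no ``$M/2^n\to M/2$'' surjectivity argument of the kind you describe appears in the proof.
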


In the case of absolute Galois groups, this had been proved in \cite[Theorem 1.3]{merkurjev2022degenerate} by using quadratic forms theory (in particular, the theory of Albert forms associated to biquaternion algebras) and a number of facts on quaternion algebras split by a biquadratic extension (see \cite[Appendix A]{merkurjev2022degenerate}). \Cref{main-h90-degenerate} generalizes \cite[Theorem 1.3]{merkurjev2022degenerate} and shows that the latter can be proved using Hilbert's Theorem 90 only.    

We proved \Cref{massey-conj} for $n=4$ and $p=2$ in  \cite[Theorem 1.2]{merkurjev2023massey}. The proof used tools from algebraic geometry which go beyond Hilbert 90. For this reason, we do not know whether the result can be generalized to $2$-oriented profinite groups satisfying formal Hilbert 90.

Our proofs of Theorems \ref{main-h90} and \ref{main-h90-degenerate} are based on the notion of Hilbert 90 module. This is a $p$-divisible $G$-module $M$ whose $p$-primary torsion subgroup is isomorphic to the colimit of the $\Z/p^n\Z(1)$, where $n\geq 1$, and such that $H^1(H,M)=0$ for every open subgroup $H$ of $G$. If $F$ is a field of characteristic not $p$, the $\Gamma_F$-module $F_{\on{sep}}^\times$ is a Hilbert 90 module for $(\Gamma_F,\theta_F)$. The definition of Hilbert 90 module is quite natural but, to our knowledge, had not yet appeared in the literature. 

If $(G,\theta)$ admits a Hilbert 90 module, then it is not difficult to show that it satisfies formal Hilbert 90. It turns out that the converse is also true.

\begin{thm}\label{hilbert-90-equiv}
		Let $(G,\theta)$ be a $p$-oriented profinite group. Then $(G,\theta)$ satisfies formal Hilbert 90 if and only if it admits a Hilbert 90 module.
\end{thm}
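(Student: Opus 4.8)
The plan is to prove the two implications separately; only the forward one requires real work, since if $(G,\theta)$ admits a Hilbert 90 module $M$ then formal Hilbert 90 follows quickly. Indeed, as $M$ is $p$-divisible there is a Kummer-type exact sequence $0\to M[p^n]\to M\xrightarrow{p^n}M\to 0$ of discrete $G$-modules, and $M[p^n]=(M[p^\infty])[p^n]\cong \Z/p^n\Z(1)$ because $M[p^\infty]\cong\Q_p/\Z_p(1)$. For every open $H\subseteq G$ the long exact sequence together with $H^1(H,M)=0$ gives a natural isomorphism $H^1(H,\Z/p^n\Z(1))\cong M^H/p^nM^H$; comparing the Kummer sequences for $p^n$ and $p$ shows that under these isomorphisms the reduction map $H^1(H,\Z/p^n\Z(1))\to H^1(H,\Z/p\Z(1))$ becomes the canonical surjection $M^H/p^nM^H\twoheadrightarrow M^H/pM^H$. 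Hence $(G,\theta)$ satisfies formal Hilbert 90.

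For the converse, assume $(G,\theta)$ satisfies formal Hilbert 90. A first reduction, using the long exact sequences associated with $0\to\Z/p^m\Z(1)\to\Q_p/\Z_p(1)\xrightarrow{p^m}\Q_p/\Z_p(1)\to0$ and with $0\to\Z/p^{n-1}\Z(1)\to\Z/p^n\Z(1)\to\Z/p\Z(1)\to0$, together with an induction on the order of a cohomology class, shows that formal Hilbert 90 for $(G,\theta)$ is equivalent to the assertion that $H^1(H,\Q_p/\Z_p(1))$ is $p$-divisible for every open $H\subseteq G$. Call a discrete $G$-module $N$ \emph{admissible} if it is $p$-divisible, its torsion subgroup $N_{\on{tors}}$ is $G$-equivariantly isomorphic to $\Q_p/\Z_p(1)$, and $N/N_{\on{tors}}$ is a $\Q$-vector space. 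For admissible $N$ the sequence $0\to\Q_p/\Z_p(1)\to N\to N/N_{\on{tors}}\to0$, combined with the vanishing of the cohomology of a profinite group with $\Q$-vector-space coefficients, shows that $H^1(H,N)$ is a quotient of $H^1(H,\Q_p/\Z_p(1))$, hence a divisible $p$-primary torsion group, for every open $H$.

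The heart of the argument is an enlargement lemma: given an admissible $N$, an open $H\subseteq G$ and $\alpha\in H^1(H,N)$, there is an admissible $N'\supseteq N$ with $\alpha\mapsto 0$ in $H^1(H,N')$. I would take $Q=\Q[G/H]=\on{Ind}_H^G\Q$, a torsion-free $\Q$-vector space, and use the Shapiro-type identification $\on{Ext}^1_G(Q,N)\cong\on{Ext}^1_H(\Q,N)$ together with the Milnor $\varprojlim^1$ sequence for $\Q=\varinjlim_n\tfrac1{n!}\Z$ to see that the image of $\on{Ext}^1_G(Q,N)\to\on{Ext}^1_G(\Z[G/H],N)=H^1(H,N)$ is exactly the set of classes admitting a coherent system of divisions; since $H^1(H,N)$ is divisible, every class is of this kind, so $\alpha$ lifts to an extension $0\to N\to N'\to Q\to0$ of discrete $G$-modules. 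The connecting map $H^0(H,Q)\to H^1(H,N)$ carries the image of the identity coset to $\alpha$, so $\alpha$ dies in $H^1(H,N')$; and $N'$ is admissible since $Q$ is torsion-free and a $\Q$-vector space, whence $N'_{\on{tors}}=N_{\on{tors}}\cong\Q_p/\Z_p(1)$. Granting the lemma, I would start from $M_0=\Q_p/\Z_p(1)$ and build an increasing chain $M_0\subseteq M_1\subseteq\cdots$ of admissible modules in which $M_{k+1}$ is obtained from $M_k$ by killing, one at a time along a well-ordering, every class in $\bigcup_{H}H^1(H,M_k)$ (over open $H$), taking colimits at limit ordinals; admissibility is preserved because it is stable under the filtered colimits that occur, whose transition maps have $\Q$-vector-space cokernels. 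Then $M:=\varinjlim_k M_k$ is admissible, and since cohomology of a profinite group commutes with filtered colimits of discrete modules and each map $H^1(H,M_k)\to H^1(H,M_{k+1})$ is zero, we get $H^1(H,M)=0$ for every open $H$; thus $M$ is a Hilbert 90 module.

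The main obstacle is to kill a cohomology class by enlarging a module \emph{without enlarging its $p$-primary torsion}: a naive enlargement by $\Z[G/H]$ kills the class but is not $p$-divisible, while a coinduced module is $p$-divisible and acyclic but carries far too much $p$-torsion. Making the enlargement keep the torsion under control is exactly what forces the reduction to $p$-divisibility of $H^1(H,\Q_p/\Z_p(1))$, and it is precisely there that the formal Hilbert 90 hypothesis enters; the bookkeeping ensuring that the transfinite iteration eventually annihilates \emph{all} classes — new ones appearing at each stage — is routine but must be arranged with some care.
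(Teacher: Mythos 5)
Your argument is correct in both directions, but your proof of the converse is organized quite differently from the paper's. You reformulate formal Hilbert 90 as the $p$-divisibility of $H^1(H,S)$ for every open $H$ (with $S\cong \Q_p/\Z_p(1)$ the colimit of the $\Z/p^n\Z(1)$), and then run a small-object argument: every class of $H^1(H,N)$, for $N$ ``admissible,'' is killed by an extension $0\to N\to N'\to \Q[G/H]\to 0$, whose existence you extract from the surjectivity of $\on{Ext}^1_H(\Q,N)\to \on{Ext}^1_H(\Z,N)=H^1(H,N)$ via the Milnor $\varprojlim^1$ sequence for $\Q=\varinjlim \tfrac{1}{n!}\Z$ together with divisibility of $H^1(H,N)$; a transfinite iteration and a passage to the colimit then kill everything. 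The paper instead works at the level of cocycles: \Cref{surjective-induction} upgrades formal Hilbert 90 to surjectivity of $Z^1(H,S[p^n])\to Z^1(H,S[p^{n-1}])$, so each $\sigma\in Z^1(H,S[p])$ admits a coherent tower of lifts $(\sigma_n)$, which is used to write down an explicit module $M_H=S\oplus\bigoplus_\sigma \Q u_\sigma$ in which every such $\sigma$ becomes a coboundary; after inducing, pushing out along the norm, and summing over all open $H$, a single module $M$ is produced in one step. The crucial structural difference is that the paper never needs to kill all of $H^1(H,M)$: it only arranges that $H^1(H,M[p])\to H^1(H,M)$ vanishes, and then deduces $H^1(H,M)=0$ from the exact sequence $H^1(H,S[p])\to H^1(H,M)\xrightarrow{p}H^1(H,M)$, the unique $p'$-divisibility of $M$, and the fact that $H^1$ of a profinite group is torsion --- this shortcut is what lets the paper avoid any iteration. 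Your route costs more homological machinery (Shapiro for $\on{Ext}$, the $\varprojlim^1$ sequence for Yoneda Ext of a sequential colimit in the category of discrete $G$-modules, and the transfinite bookkeeping), but it isolates cleanly where the hypothesis enters: your divisibility reformulation of formal Hilbert 90 is exactly the cohomological shadow of the paper's cocycle-lifting lemma, and your enlargement lemma would apply in any situation where one only knows that $H^1$ of the torsion submodule is divisible. The easy direction is proved the same way in both.
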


By working with a Hilbert 90 module, we are able to generalize many familiar properties of the Galois cohomology of fields to the cohomology of $p$-oriented profinite groups satisfying formal Hilbert 90.

We conclude the Introduction with a summary of the content of each section. In \Cref{section2}, we introduce $p$-oriented profinite groups, the formal Hilbert 90 property and Hilbert 90 modules, and we prove some basic properties. In \Cref{section3}, we prove that a standard Galois cohomology sequence extends to the context of $p$-oriented profinite groups admitting a Hilbert 90 module; see \Cref{cyclicext-module}. \Cref{section4} is devoted to the proof of \Cref{hilbert-90-equiv}. The definition and basic properties of Massey products are recalled in \Cref{massey-section}. We prove \Cref{main-h90} in  \Cref{section6} and \Cref{main-h90-degenerate} in \Cref{section7}. In the final \Cref{section-example}, we illustrate \Cref{main-h90} by a simple example; see \Cref{counterexample-formal-h90}. 

\subsection*{Notation}

Let $G$ be a profinite group. A topological $G$-module is an abelian Hausdorff topological group on which $G$ acts continuously on the left. A discrete $G$-module is a topological $G$-module which carries the discrete topology. In this article, the term $G$-module will always refer to a discrete $G$-module. 

If $A$ is a $G$-module, we write $A^G$ for the subgroup of $G$-invariant elements of $A$. For all $i\geq 0$ we write $C^i(G,A)$, (resp. $Z^i(G,A)$) for the group of mod $p$ non-homogeneous degree-$i$ continuous cochains (resp. cocycles) of $G$, and we write $H^i(G,A)$ for the $i$-th cohomology group of $A$. If $H\subset G$ is an open subgroup, we write $\on{Res}^G_H\colon H^i(G,A)\to H^i(H,A)$ for the restriction maps, $\on{Cor}^G_H\colon H^i(H,A)\to H^i(G,A)$ for the corestriction maps (for $i=0$, this is the norm map $N_{G/H}\colon A^H\to A^G$), and, when $H$ is also normal, $\on{Inf}^G_{G/H}\colon H^i(G/H,A^H)\to H^i(G,A)$ for the inflation maps.

If $H$ is a closed subgroup of $G$ and $B$ is an $H$-module, we write $\on{Ind}_H^G(B)$ for the $G$-module induced by $B$: it consists of all continuous maps $x\colon G\to B$ such that $x(hg)=hx(g)$ for all $g\in G$ and $h\in H$, and $g\in G$ acts on it by the formula $(g\cdot x)(g')\coloneqq x(g'g)$ for all $g'\in G$. If $A$ is a $G$-module, we have an injective homomorphism of $G$-modules
\begin{equation}\label{ind-inclusion}i\colon A\to \on{Ind}^G_H(A),\qquad (ia)(g)=ga\text{ for all $g\in G$}
\end{equation}
and, if $H$ is an open subgroup $G$, a homomorphism of $G$-modules
\begin{equation}\label{ind-norm}
    \nu\colon \on{Ind}^G_H(A)\to A,\qquad \nu(x)=\sum_{gH\in G/H}gx(g^{-1}).
\end{equation}
For any closed subgroup $H$ of $G$, the $G$-modules $\on{Ind}_H^G(A)$ and $\on{Hom}_{\Z}(\Z[G/H],A)$ are canonically isomorphic. 

If $A$ is an abelian group, $n\geq 1$ is an integer, and $p$ is a prime number, we denote by $A[n]$ the $n$-torsion subgroup of $A$ and by $A\{p\}$ be $p$-primary torsion subgroup of $A$. If $p$ is a prime number, we write $\Z_p$ for the ring of $p$-adic integers, and $\Z_{(p)}$ for the localization of $\Z$ at $p$.

	\section{The formal Hilbert 90 property and Hilbert 90 modules}\label{section2}

\subsection{Oriented profinite groups} Let $p$ be a prime number, let $G$ be a profinite group, and let $\theta\colon G\to \Z_p^{\times}$ be a continuous group homomorphism. We call $\theta$ a {\em $p$-orientation} of $G$ and the pair $(G,\theta)$ a {\em $p$-oriented profinite group}.

We write $\Z_p(1)$ for the topological $G$-module with underlying topological group $\Z_p$ and where $G$ acts via $\theta$, that is, $g\cdot v\coloneqq \theta(g)v$ for every $g\in G$ and every $v\in \Z_p$. For all $n\geq 0$, we set $\Z/p^n\Z(1)\coloneqq \Z_p(1)/p^n\Z_p(1)$.

We say that $(G,\theta)$ is {\em cyclotomic} if the image of $\theta$ is contained in $1+p\Z_p$. The pair $(G,\theta)$ is cyclotomic if and only if the $G$-action on $\Z/p\Z(1)$ is trivial. Every $2$-oriented profinite group is cyclotomic. 

\begin{example}\label{canonical-orientation}
    Let $F$ be a field and write $\Gamma_F$ for the absolute Galois group of $F$. We define the {\em canonical $p$-orientation} $\theta_F$ on $\Gamma_F$ as follows. If $\on{char}(F)\neq p$, we define $\theta_F$ as the continuous homomorphism $\theta_F\colon \Gamma_F\to \Z_p^{\times}$ given by $g(\zeta)=\zeta^{\theta_F(g)}$ for every root of unity $\zeta$ of $p$-power order. If $\on{char}(F)=p$, we let $\theta_F$ be the trivial homomorphism. The pair $(\Gamma_F,\theta_F)$ is a $p$-oriented profinite group.
    \end{example}

\subsection{The formal Hilbert 90 property} 
Let $(G,\theta)$ be a $p$-oriented profinite group. We say that $(G,\theta)$ {\em satisfies formal Hilbert 90} if for every open subgroup $H\subset G$ and all $n\geq 1$ the reduction map $H^1(H,\Z/p^n\Z(1))\to H^1(H,\Z/p\Z(1))$ is surjective.

    \begin{example}\label{field-formal-h90}
    Let $F$ be a field, let $\Gamma_F$ be the absolute Galois group of $F$, and let $\theta_F$ be the canonical orientation on $\Gamma_F$ defined in \Cref{canonical-orientation}. We now show that $(\Gamma_F,\theta_F)$ satisfies formal Hilbert 90. Let $H\subset \Gamma_F$ be an open subgroup, and set $L\coloneqq (F_{\on{sep}})^H$.
    
    (1) Suppose that $\on{char}(F)\neq p$. By Kummer Theory, for all $n\geq 1$ the reduction map $H^1(H,\Z/p^n\Z(1))\to H^1(H,\Z/p\Z(1))$ is identified with the projection map $L^{\times}/L^{\times p^n}\to L^{\times}/L^{\times p}$, and in particular it is surjective. 

    (2) Suppose that $\on{char}(F)=p$. For all $n\geq 1$, we have $H^2(H,\Z/p^{n-1}\Z)=0$ because $L$ has cohomological $p$-dimension $\leq 1$; see \cite[\S 2.2, Proposition 3]{serre2002galois}. The short exact sequence
    \[0\to \Z/p^{n-1}\Z\to \Z/p^n\Z\to \Z/p\Z\to 0\]
    now implies the surjectivity of the reduction map $H^1(H,\Z/p^n\Z)\to H^1(H,\Z/p\Z)$.
    \end{example}

The group $\Z_p^{\times}$ acts on the abelian group $\Q/\Z_{(p)}$ by multiplication. We let $S$ be the $G$-module whose underlying abelian group is $\Q/\Z_{(p)}$ and on which $G$ acts via $\theta$. For all $n\geq 1$, we have an isomorphism of $G$-modules $\Z/p^n\Z(1)\to S[p^n]$ given by $a+p^n\Z\mapsto a/p^n+\Z_{(p)}$. Therefore, $S$ is the colimit of the $\Z/p^n\Z(1)$ for $n\geq 1$.
    
For every $n\geq 1$, the map $S\to S$ given by multiplication by $p^{n-1}$ restricts to a surjective $G$-module homomorphism $S[p^n]\to S[p]$. Then $(G,\theta)$ satisfies formal Hilbert 90 if and only if for every open subgroup $H\subset G$ and all $n\geq 1$ the induced map $H^1(H,S[p^n])\to H^1(H,S[p])$ is surjective.

 \begin{lemma}\label{surjective-induction}
     Let $(G,\theta)$ be a $p$-oriented profinite group which satisfies formal Hilbert 90. Then for every open subgroup $H\subset G$ and every $n\geq 1$ the maps 
     \[Z^1(H,S[p^n])\to Z^1(H,S[p^{n-1}]),\qquad H^1(H,S[p^n])\to H^1(H,S[p^{n-1}]).\] are surjective.
 \end{lemma}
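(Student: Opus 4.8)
The map $S[p^n]\to S[p^{n-1}]$ in the statement is multiplication by $p$, which I denote $\pi_n$; since $S$ is $p$-divisible, $\pi_n$ is a surjection of $G$-modules with kernel $S[p]$, so there is a short exact sequence of $G$-modules
\[0\longrightarrow S[p]\longrightarrow S[p^n]\overset{\pi_n}{\longrightarrow}S[p^{n-1}]\longrightarrow 0,\]
and I write $\delta_n\colon H^1(H,S[p^{n-1}])\to H^2(H,S[p])$ for its connecting homomorphism. The plan is to reduce both assertions of the lemma to the single statement that $\delta_n=0$ for every open $H\subseteq G$ and every $n\geq 1$. For the map on $H^1$ this is just exactness of the long exact cohomology sequence. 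For the map on $Z^1$: given $c\in Z^1(H,S[p^{n-1}])$, lift it to a continuous $1$-cochain $\tilde c$ valued in $S[p^n]$ --- possible because $H$ is profinite and the modules discrete, so $c$ factors through a finite quotient of $H$ --- and note that $\pi_n(d\tilde c)=d(\pi_n\tilde c)=dc=0$, so $d\tilde c$ is valued in $\ker\pi_n=S[p]$ and, by construction of the connecting map, represents $\delta_n([c])$; if $\delta_n=0$, then $d\tilde c=d\sigma$ for some $\sigma\in C^1(H,S[p])$, and $\tilde c-\sigma\in Z^1(H,S[p^n])$ still lifts $c$, because $\sigma$ is valued in $S[p]=\ker\pi_n$.

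So it remains to prove that $(\pi_n)_*\colon H^1(H,S[p^n])\to H^1(H,S[p^{n-1}])$ is surjective for all open $H$ and all $n\geq 1$, which I would do by induction on $n$; the case $n=1$ is vacuous, as $S[p^0]=0$. For the inductive step, fix $n\geq 2$ and an open subgroup $H$. Multiplication by $p^{n-1}$ on $S[p^n]$ factors as $\pi_n$ followed by multiplication by $p^{n-2}$ on $S[p^{n-1}]$; hence the surjection $H^1(H,S[p^n])\to H^1(H,S[p])$ supplied by formal Hilbert 90 (applied to $H$, in the reformulation in terms of $S$ recalled just before the lemma) factors through $(\pi_n)_*$, and a diagram chase gives $H^1(H,S[p^{n-1}])=\operatorname{im}(\pi_n)_*+\ker\mu$, where $\mu\colon H^1(H,S[p^{n-1}])\to H^1(H,S[p])$ is induced by multiplication by $p^{n-2}$. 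By the long exact sequence of $0\to S[p^{n-2}]\to S[p^{n-1}]\to S[p]\to 0$ (second map multiplication by $p^{n-2}$), $\ker\mu$ is the image of the map $j_*\colon H^1(H,S[p^{n-2}])\to H^1(H,S[p^{n-1}])$ induced by the inclusion $j\colon S[p^{n-2}]\hookrightarrow S[p^{n-1}]$. Finally, $\pi_n$ is compatible with the inclusions: writing $\iota\colon S[p^{n-1}]\hookrightarrow S[p^n]$ for the inclusion, one has $\pi_n\circ\iota=j\circ\pi_{n-1}$, both sides being multiplication by $p$ on $S[p^{n-1}]$. So if $\zeta\in H^1(H,S[p^{n-2}])$, the induction hypothesis lets us write $\zeta=(\pi_{n-1})_*\zeta'$, whence $j_*\zeta=j_*(\pi_{n-1})_*\zeta'=(\pi_n)_*\iota_*\zeta'\in\operatorname{im}(\pi_n)_*$; thus $\operatorname{im}j_*\subseteq\operatorname{im}(\pi_n)_*$, and therefore $H^1(H,S[p^{n-1}])=\operatorname{im}(\pi_n)_*$, closing the induction.

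The heart of the argument is the inductive step: formal Hilbert 90 only yields surjectivity onto $H^1(H,S[p])$, that is, surjectivity of the composite $H^1(H,S[p^n])\to H^1(H,S[p^{n-1}])\to H^1(H,S[p])$, and this does not on its own make the first factor surjective. What closes the gap is the induction hypothesis, transported along the identity $\pi_n\circ\iota=j\circ\pi_{n-1}$, which shows that the only possible obstruction ($\ker\mu=\operatorname{im}j_*$) already lies in $\operatorname{im}(\pi_n)_*$. Everything else --- the reduction to the vanishing of $\delta_n$ and the compatibility identities between the multiplication and inclusion maps --- is routine; the one mild subtlety is that the cochain lift in the $Z^1$ statement can be chosen continuous, which is automatic since $H$ is profinite and the coefficients discrete.
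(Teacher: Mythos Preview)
Your proof is correct and follows essentially the same approach as the paper. The inductive argument for surjectivity on $H^1$ is the same diagram chase as the paper's (your identity $\pi_n\circ\iota=j\circ\pi_{n-1}$ is precisely the commutativity of the left square in the paper's diagram); for the $Z^1$ statement the paper instead takes a cocycle lift $\tau$ supplied by $H^1$-surjectivity and corrects it by a $0$-coboundary, whereas you take an arbitrary cochain lift and correct by a $1$-cochain in $S[p]$ using $\delta_n=0$, but these are interchangeable packagings of the same idea.
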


 \begin{proof}
     We first prove the surjectivity of $H^1(H,S[p^n])\to H^1(H,S[p^{n-1}])$ by induction on $n\geq 1$. The case $n=1$ is trivial. Now let $n\geq 2$, and suppose that the statement is true for $n-1$. We have a commutative diagram with exact rows
     \[
    \begin{tikzcd}
    0 \arrow[r] & S[p^{n-1}] \arrow[r] \arrow[d,"p"] & S[p^n] \arrow[r,"p^{n-1}"] \arrow[d,"p"] & S[p] \arrow[d,equal] \arrow[r] & 0 \\
    0 \arrow[r] & S[p^{n-2}] \arrow[r] & S[p^{n-1}] \arrow[r,"p^{n-2}"] & S[p] \arrow[r] & 0.
    \end{tikzcd}
     \]
    Passing to cohomology, we obtain the diagram
    \[
\begin{tikzcd}
    H^1(H,S[p^{n-1}]) \arrow[r] \arrow[d,->>] & H^1(H,S[p^n]) \arrow[r,->>] \arrow[d] & H^1(H,S[p]) \arrow[d,equal]\\
    H^1(H,S[p^{n-2}]) \arrow[r] & H^1(H,S[p^{n-1}]) \arrow[r] & H^1(H,S[p]),
    \end{tikzcd}
    \]
    where the rows are exact, the top-right horizontal map is surjective because $(G,\theta)$ satisfies formal Hilbert 90, and the left vertical map is surjective by inductive assumption. A diagram chase now shows that the middle vertical map is surjective. The conclusion follows by induction on $n$.  

    We now prove the surjectivity of the map $Z^1(H,S[p^n])\to Z^1(H,S[p^{n-1}])$ for all $n\geq 1$. Let $\sigma\in Z^1(H,S[p^{n-1}])$. Since the map $H^1(H,S[p^n])\to H^1(H,S[p^{n-1}])$ is surjective, there exist $\tau\in  Z^1(H,S[p^n])$ and $a\in S[p^{n-1}]$ such that for all $h\in H$ we have $p\tau(h)-\sigma(h)=(h-1)a$ in $S[p^{n-1}]$. Let $b\in S[p^{n}]$ be such that $a=pb$, and define $\widetilde{\sigma}\in Z^1(H,S[p^n])$ by $\widetilde{\sigma}(h)=\tau(h)-(h-1)b$ for all $h\in H$. Then $p\widetilde{\sigma}=\sigma$, as desired.    
 \end{proof}

\subsection{Hilbert 90 modules}
   Let $(G,\theta)$ be a $p$-oriented profinite group. A {\em Hilbert 90 module} for $(G,\theta)$ is a discrete $G$-module $M$ such that
   \begin{enumerate}[label=(\roman*)]
	\item $pM=M$,
	\item $M\{p\}\simeq S$ as $G$-modules, and
	\item $H^1(H,M)=0$ for any open subgroup $H\subset G$.
	\end{enumerate}
The definition of Hilbert 90 module is modeled on the following example.

    \begin{example}\label{example-h90-galois}
    Let $p$ be a prime number, let $F$ be a field of characteristic different from $p$, let $\Gamma_F$ be the absolute Galois group of $F$, and let $\theta$ be the canonical orientation on $\Gamma_F$; see \Cref{canonical-orientation}. It follows from Hilbert's Theorem 90 that $F_{\on{sep}}^{\times}$ is a Hilbert 90 module for $(\Gamma_F,\theta_F)$.
    \end{example}

\begin{lemma}\label{subgroup}
    Let $(G,\theta)$ be a $p$-oriented profinite group, and let $M$ be a Hilbert 90 module for $(G,\theta)$. Then $M$ is also a Hilbert 90 module for $(H,\theta|_H)$ for every closed subgroup $H$ of $G$.
\end{lemma}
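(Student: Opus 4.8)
The plan is to verify each of the three defining conditions (i), (ii), (iii) of a Hilbert 90 module for the pair $(H,\theta|_H)$, given that $M$ satisfies them for $(G,\theta)$. Conditions (i) and (ii) should be essentially immediate: the equality $pM=M$ is a statement about the underlying abelian group and does not reference the group action at all, so it transfers verbatim. Similarly, $M\{p\}$ is the same abelian subgroup regardless of which group acts, and a $G$-module isomorphism $M\{p\}\xrightarrow{\sim} S$ restricts to an $H$-module isomorphism, where on the $H$-side $S$ is understood as $\Q/\Z_{(p)}$ with $H$ acting via $\theta|_H$; this matches the definition of $S$ for $(H,\theta|_H)$ since the orientation of $H$ is precisely the restriction of $\theta$.

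The only point requiring an argument is condition (iii): that $H^1(U,M)=0$ for every open subgroup $U\subset H$. The key observation is that an open subgroup of $H$ need not be an open subgroup of $G$ (since $H$ itself may be merely closed, not open, in $G$), so we cannot simply quote condition (iii) for $(G,\theta)$ directly. Instead I would argue as follows. Let $U\subset H$ be open, so $U=V\cap H$ for some open subgroup $V\subset G$. Then $U$ is an open — hence closed — subgroup of $V$, and $V$ is an open subgroup of $G$, so by hypothesis $H^1(V,M)=0$. I would then reduce to showing $H^1(U,M)=0$ knowing $H^1(V,M)=0$ for the closed subgroup $U\subset V$. This follows from the standard fact that vanishing of $H^1$ of a module for all open subgroups of a profinite group implies vanishing for all closed subgroups as well: one writes $U$ as the intersection of the open subgroups of $V$ containing it, and uses that continuous cohomology commutes with this cofiltered limit, i.e. $H^1(U,M)=\varinjlim_{U\subset W\subset V,\,W\text{ open}} H^1(W,M)$, the colimit being over a system of zero groups. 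Alternatively, and perhaps more cleanly, I would first establish the vanishing for all closed subgroups of $G$ as a preliminary remark (this is a purely formal consequence of condition (iii) as stated, via the colimit argument), and then simply note that every open subgroup $U$ of $H$ is a closed subgroup of $G$, so $H^1(U,M)=0$ is immediate.

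The main obstacle — really the only subtlety — is the passage from open to closed subgroups, i.e. making precise the continuity/colimit argument for continuous cohomology of a discrete module. I expect this to be routine given the standard references (e.g. Serre's \emph{Galois Cohomology}), since $M$ is a discrete $G$-module and continuous cochain cohomology is well-behaved under cofiltered limits of profinite groups; in degree $1$ one can even argue very concretely with cocycles, since a continuous $1$-cocycle on a closed subgroup, being locally constant, factors through a finite quotient and hence extends over some open subgroup, while a coboundary bounding it over that open subgroup restricts to a coboundary over the closed subgroup. Everything else in the lemma is a matter of unwinding definitions.
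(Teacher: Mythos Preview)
Your proposal is correct and follows essentially the same approach as the paper. The paper also dismisses (i) and (ii) as clear and, for (iii), writes an open subgroup $H'\subset H$ as the intersection of the open subgroups $H'N\subset G$ (with $N$ ranging over open normal subgroups of $G$) and applies the colimit formula $H^1(H',M)=\varinjlim_N H^1(H'N,M)=0$; this is precisely your ``alternative'' argument that every open subgroup of $H$ is closed in $G$ and hence has vanishing $H^1$ by continuity of cohomology.
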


\begin{proof}
    Since properties (i) and (ii) of the definition of Hilbert 90 module are clearly preserved by passing to subgroups, we need only check (iii). Let $H'$ be an open subgroup of $H$. Then $H'$ is equal to the intersection of all the subgroups $H'N$, where $N$ ranges over all open normal subgroups of $G$. By \cite[Theorem 3.16]{koch2002galois}, we have
		\[
		H^1(H',M)= \underset{N}{\on{colim}}\, H^1(H'N,M)=0,
		\]
		where we have used the fact that if $N$ is open in $G$, then so is $H'N$.
\end{proof}

We conclude this section by generalizing two basic properties of Galois cohomology to profinite groups admitting a Hilbert 90 module. 

\begin{lemma}\label{omega-lemma}
    Let $(G,\theta)$ be a $p$-oriented profinite group, let $M$ a Hilbert 90 module for $(G,\theta)$, and let $a\in M^G$. For all $g\in G-G_a$, we have an exact sequence
    \[M^{G_a}\xrightarrow{g-1} M^{G_a}\xrightarrow{N_{G/G_a}} M^G.\]
\end{lemma}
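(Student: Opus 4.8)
The plan is to recognize the displayed three-term sequence as encoding the vanishing of a Tate cohomology group of the cyclic quotient $G/G_a$, and then to obtain that vanishing from axiom (iii) of a Hilbert 90 module. We may assume $G_a \subsetneq G$, since otherwise there is nothing to prove; then $G_a$ is an open normal subgroup of $G$ and $Q := G/G_a$ is cyclic of order $p$, generated by the image $\bar g$ of any $g \in G - G_a$. The relevant dictionary is: the corestriction $N_{G/G_a}$, restricted to $M^{G_a}$, is the norm map $N_Q$ of the finite group $Q$ acting on the $Q$-module $M^{G_a}$ (with target $(M^{G_a})^Q = M^G$), as one sees by taking $1, g, \dots, g^{p-1}$ for coset representatives; and the operator $g - 1$ on $M^{G_a}$ is $\bar g - 1$. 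With these identifications, exactness of the sequence at the middle term is precisely the assertion
\[
\hat H^{-1}\!\left(Q, M^{G_a}\right) \;=\; \ker(N_Q)\big/(\bar g - 1)M^{G_a} \;=\; 0 .
\]

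The inclusion $\on{im}(g-1) \subseteq \ker N_{G/G_a}$ is the one-line computation $N_{G/G_a}((g-1)x) = g^p x - x = 0$, valid because $g^p \in G_a$ and $x \in M^{G_a}$; this merely records that the two maps compose to zero. The content is the reverse inclusion, that is, the displayed vanishing. To obtain it I would use that $Q$ is finite cyclic: its Tate cohomology is $2$-periodic, so $\hat H^{-1}(Q, M^{G_a}) \cong \hat H^{1}(Q, M^{G_a})$, and $\hat H^1$ agrees with ordinary cohomology $H^1(Q, M^{G_a})$. The inflation-restriction exact sequence for the open normal subgroup $G_a \subseteq G$ then yields an injection $H^1(Q, M^{G_a}) \hookrightarrow H^1(G, M)$, and $H^1(G, M) = 0$ by property (iii) of a Hilbert 90 module applied to the open subgroup $H = G$. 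Stringing these together gives $\hat H^{-1}(Q, M^{G_a}) = 0$, completing the proof.

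The main obstacle here is conceptual rather than computational: it is recognizing that the elementary-looking sequence is governed by $\hat H^{-1}$ of the cyclic quotient, and recalling that for a cyclic group one may slide from degree $-1$ to degree $+1$ so as to bring the axiom $H^1(-, M) = 0$ to bear. Once this is set up the remainder is formal, and no property of $M$ beyond axiom (iii) enters. Indeed the argument is the abstract analogue of the classical fact that the kernel of the norm of a cyclic field extension $L/K$ consists of the elements $\sigma(y)/y$, with the usual input (linear independence of characters) replaced by the vanishing of $H^1(G, M)$. The only points to watch are that $g-1$ genuinely preserves $M^{G_a}$, which uses normality of $G_a$, and that $N_Q$ really equals the corestriction $N_{G/G_a}$, which is exactly the choice of coset representatives made above.
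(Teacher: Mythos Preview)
Your proposal is correct and follows essentially the same route as the paper's own proof: reduce to $G_a\neq G$, identify exactness at the middle term with the vanishing of $\hat H^{-1}(G/G_a,M^{G_a})$, use periodicity of Tate cohomology for the cyclic quotient to pass to $H^1(G/G_a,M^{G_a})$, and conclude via the injectivity of inflation $H^1(G/G_a,M^{G_a})\hookrightarrow H^1(G,M)=0$. The only cosmetic difference is that the paper cites specific references for each of these standard facts, whereas you sketch them directly.
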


This is a generalization of Hilbert's Theorem 90 for degree-$p$ cyclic extensions.

\begin{proof}
    We may assume that $G_a\neq G$. Let $C \coloneqq G/G_a$, let $g\in G-G_a$, and let $\sigma\coloneqq gG_a\in C$. Thus $C$ is a cyclic group of order $p$, generated by $\sigma$, and $M^{G_a}$ is naturally a $C$-module. By \cite[Proposition (1.2.6) and end of p. 23]{neukirch2008cohomology}, the homology of the complex in the statement is isomorphic to the Tate cohomology group $\hat{H}^{-1}(C,M^{G_a})$. It thus suffices to show that $\hat{H}^{-1}(C,M^{G_a})$ is trivial. 
    
    By \cite[Proposition (1.6.7)]{neukirch2008cohomology}, the inflation map $H^1(C,M^{G_a})\to H^1(G,M)$ is injective. Since $M$ is a Hilbert 90 module for $(G,\theta)$, the group $H^1(G,M)$ is trivial, and hence $H^1(C,M^{G_a})$ is also trivial. Since $C$ is a finite cyclic group, by \cite[Proposition (1.7.1)]{neukirch2008cohomology} the group $\hat{H}^{-1}(C,M^{G_a})$ is isomorphic to $H^1(C,M^{G_a})$, and hence it is also trivial, as desired.
\end{proof}

	Let $(G,\theta)$ be a cyclotomic $p$-oriented profinite group, and let $M$ be a Hilbert 90 module for $(G,\theta)$. We fix an injective homomorphism $\iota\colon \Z/p\Z\hookrightarrow M$. The short exact sequence
        \begin{equation}\label{fix-iota}0\to\Z/p\Z\xrightarrow{\iota} M\xrightarrow{p}M\to 0\end{equation}
        induces an exact sequence
\begin{equation}\label{connecting}
    M^G\xrightarrow{p} M^G\xrightarrow{\partial} H^1(G,\Z/p\Z)\to 0.
\end{equation}
For all $a\in M^G$, we set $\chi_a\coloneqq \partial(a)$ and $G_a\coloneqq \on{Ker}(\chi_a)$. The exactness of (\ref{connecting}) amounts to the following statement: Every $\chi\in H^1(G,\Z/p\Z)$ is of the form $\chi_a$ for some $a\in M^G$, uniquely determined up to an element of $p(M^G)$.

\begin{example}
    We maintain the notation of \Cref{example-h90-galois}. The pair $(\Gamma_F,\theta_F)$ is cyclotomic if and only if $F$ contains a primitive $p$-th root of unity. A choice of a primitive $p$-th root of unity $\zeta\in F$ determines an injection $\iota \colon \Z/p\Z\to F_{\on{sep}}^\times$. The map $\partial\colon  F^\times\to H^1(\Gamma_F,\Z/p\Z)$ of (\ref{connecting}) sends $a\in F^\times$ to the character $\chi_a\colon\Gamma_F\to \Z/p\Z$ determined by $g(a)=\zeta^{\chi_a(g)}a$ for all $a\in F_{\on{sep}}^\times$.
\end{example}

\begin{lemma}\label{serre-brauer}
    Let $(G,\theta)$ be a cyclotomic $p$-oriented profinite group, let $M$ be a Hilbert 90 module for $(G,\theta)$, and let $a,b\in M^G$. Let $\omega_a\in H^2(G,\Z)$ be the Bockstein of $\chi_a\in H^1(G,\Z/p\Z)$, that is, the image of $\chi_a$ under the connecting homomorphism $H^1(G,\Z/p\Z)\to H^2(G,\Z)$ associated to the short exact sequence of trivial $G$-modules
    \[0\to \Z\xrightarrow{\times p}\Z\to \Z/p\Z\to 0.\] Then the map $H^2(G,\Z/p\Z)\to H^2(G,M)$ sends $\chi_a\cup\chi_b$ to $b\cup \omega_a$.
\end{lemma}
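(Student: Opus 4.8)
The plan is to prove the equality $\iota_*(\chi_a\cup\chi_b)=b\cup\omega_a$ in $H^2(G,M)$ directly at the level of inhomogeneous cochains; it is the abstract counterpart of a classical identity in the Galois cohomology of fields relating the two standard descriptions of a symbol algebra. First I would make all the classes explicit. Using $pM=M$, pick $c_a,c_b\in M$ with $pc_a=a$ and $pc_b=b$. For every $g\in G$ one has $p(gc_a-c_a)=ga-a=0$, so $gc_a-c_a\in\iota(\Z/p\Z)=M[p]$, and by the definition of the connecting map in (\ref{connecting}) the continuous $1$-cocycle $g\mapsto\iota^{-1}(gc_a-c_a)$ represents $\chi_a$; similarly $g\mapsto\iota^{-1}(gc_b-c_b)$ represents $\chi_b$. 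Denote these cocycles by $\chi_a,\chi_b\colon G\to\Z/p\Z$ and fix a continuous set-theoretic lift $\widetilde{\chi}_a\colon G\to\Z$ of $\chi_a$; then $\omega_a$ is represented by the $2$-cocycle $(g,h)\mapsto p^{-1}\bigl(\widetilde{\chi}_a(g)+\widetilde{\chi}_a(h)-\widetilde{\chi}_a(gh)\bigr)$.

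Next I would write down both sides as $2$-cocycles with values in $M$. Since $(G,\theta)$ is cyclotomic, $G$ acts trivially on $\Z/p\Z$ — which is also precisely what makes $\iota$ a homomorphism of $G$-modules and forces $M[p]=\iota(\Z/p\Z)$ to be $G$-fixed — so $\chi_a\cup\chi_b$ is represented by $(g,h)\mapsto\chi_a(g)\chi_b(h)$, and therefore $\iota_*(\chi_a\cup\chi_b)$ by
\[(g,h)\longmapsto\iota\bigl(\chi_a(g)\chi_b(h)\bigr)=\widetilde{\chi}_a(g)\cdot\bigl(hc_b-c_b\bigr),\]
using that $\iota$ is $\Z$-linear and $\iota(\chi_b(h))=hc_b-c_b$. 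On the other hand, $b\cup\omega_a$ is the image of $\omega_a$ under the map $H^2(G,\Z)\to H^2(G,M)$ induced by the $G$-homomorphism $\Z\to M$, $n\mapsto nb$, hence is represented by $(g,h)\mapsto\omega_a(g,h)\,b=\bigl(\widetilde{\chi}_a(g)+\widetilde{\chi}_a(h)-\widetilde{\chi}_a(gh)\bigr)c_b$, the last equality because $b=pc_b$.

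It then remains to exhibit a $1$-cochain whose coboundary is the difference of these two $2$-cocycles, and I expect the continuous $1$-cochain
\[\phi(g)=-\widetilde{\chi}_a(g)\,c_b-\iota\bigl(\chi_a(g)\chi_b(g)\bigr)\]
to work: when one computes $d\phi(g,h)=g\phi(h)-\phi(gh)+\phi(g)$, the first summand of $\phi$ already produces the required difference up to the symmetric term $\iota\bigl(\chi_a(g)\chi_b(h)+\chi_a(h)\chi_b(g)\bigr)$, and this leftover term is exactly the coboundary $-d(\iota\circ(\chi_a\chi_b))$ contributed by the second summand, since $\chi_a,\chi_b$ are cocycles for the trivial action and hence $d(\chi_a\chi_b)(g,h)=-\bigl(\chi_a(g)\chi_b(h)+\chi_a(h)\chi_b(g)\bigr)$, while $\iota$ commutes with $d$. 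I expect the main (entirely routine) difficulty to be bookkeeping: each product $\widetilde{\chi}_a(g)\cdot x$ with $x\in M[p]$ depends only on $\widetilde{\chi}_a(g)\bmod p$, so one must keep careful track of the interaction between the $\Z$-valued lift $\widetilde{\chi}_a$ and the $M$-valued cochains, and of the sign conventions for the Bockstein and the cup product. A more structural proof via compatibility of cup products with connecting maps seems hard to carry out here, because $M\otimes_{\Z}\Z/p\Z=0$ whereas $\on{Tor}_1^{\Z}(\Z/p\Z,M)=M[p]\neq0$, so the sequences one would need to tensor fail to stay exact; this is why I would do the cochain computation above.
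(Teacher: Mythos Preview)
Your proof is correct and follows essentially the same route as the paper's: an explicit inhomogeneous cochain computation exhibiting a $1$-cochain whose coboundary is the difference of representatives for $\iota_*(\chi_a\cup\chi_b)$ and $b\cup\omega_a$. The one simplification the paper makes is to first invoke graded-commutativity $\chi_a\cup\chi_b=-\chi_b\cup\chi_a$ before pushing forward, so that $\iota_*(\chi_a\cup\chi_b)$ is represented by $(g,h)\mapsto -(gc_b-c_b)\,\widetilde{\chi}_a(h)$; with that representative the bounding cochain is simply $g\mapsto c_b\,\widetilde{\chi}_a(g)$, and your correction term $-\iota(\chi_a(g)\chi_b(g))$ is not needed.
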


    When $G$ is the absolute Galois group of a field of characteristic different from $p$, \Cref{serre-brauer} reduces to a well-known property of the Brauer group of a field; see \cite[XIV.\S 2 Proposition 5]{serre1979local}, whose proof adapts to our more general situation without essential changes.

\begin{proof}
     Let $\beta\in M$ be such that $p\beta=b$. By definition, $b\cup\omega_a$ is represented by the $2$-cocycle $G\to M$ which sends a pair $(g,h)\in G\times G$ to \[\beta\cup(\widetilde{\chi}_a(g)+\widetilde{\chi}_a(h)-\widetilde{\chi}_a(gh)),\] where $\widetilde{\chi}_a\colon G\to \Z$ is a function lifting $\chi_a$. The anticommutativity of the cup product gives $\chi_a\cup\chi_b=-\chi_b\cup\chi_a$. Using  the formula for the cup product of two $1$-cocycles of \cite[p. 221]{cartan1956homological}, the image of $-\chi_b\cup\chi_a$ in $H^2(G,M)$ may be represented by the $2$-cocycle which sends a pair $(g,h)\in G\times G$ to
    \[-(g\beta-\beta)\widetilde{\chi}_a(h).\]
    The difference of these two cocycles sends a pair $(g,h)\in G\times G$ to
    \[g\beta\cup\widetilde{\chi}_a(h)-\beta\cup\widetilde{\chi}_a(gh)+\beta\cup\widetilde{\chi}_a(g).\]
    This is the coboundary of the $1$-cochain which sends $g\in G$ to $\beta\cup\widetilde{\chi}_a(g)$.
\end{proof}

	\section{The standard exact sequence}\label{section3}

    \begin{lemma}\label{norm-cor}
        Let $(G,\theta)$ be a cyclotomic $p$-oriented profinite group, let $M$ be a Hilbert 90 module for $(G,\theta)$, and let $H$ be an open subgroup of $G$. We have a commutative square
        \[
        \begin{tikzcd}
        M^H \arrow[r,"\partial"] \arrow[d,"N_{G/H}"] & H^1(H,\Z/p\Z) \arrow[d,"\on{Cor}^G_H"] \\ 
        M^G \arrow[r,"\partial"] & H^1(G,\Z/p\Z), 
        \end{tikzcd}
        \]
        where the horizontal maps come from (\ref{connecting}).
    \end{lemma}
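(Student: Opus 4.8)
The strategy is to recognize the square as an instance of the compatibility of corestriction with connecting homomorphisms, together with the identification of degree-$0$ corestriction with the norm map.

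First I would note that the short exact sequence (\ref{fix-iota}) defining $\partial$ is a sequence of $G$-modules, and restricting the $G$-action to $H$ yields the short exact sequence of $H$-modules
\[0\to\Z/p\Z\xrightarrow{\iota} M\xrightarrow{p}M\to 0.\]
By \Cref{subgroup}, $M$ is a Hilbert 90 module for $(H,\theta|_H)$, and $(H,\theta|_H)$ is cyclotomic because $(G,\theta)$ is; hence the exact sequence (\ref{connecting}) for $(H,\theta|_H)$, formed with the injection $\iota$ restricted to $H$, is precisely the tail of the long exact cohomology sequence attached to the displayed short exact sequence of $H$-modules. In particular, the top horizontal map $\partial\colon M^H\to H^1(H,\Z/p\Z)$ of the stated square is the degree-$0$ connecting homomorphism for $H$, and likewise the bottom map is the degree-$0$ connecting homomorphism for $G$.

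Next I would invoke the fact that the corestriction maps $\on{Cor}^G_H\colon H^i(H,-)\to H^i(G,-)$ form a morphism of cohomological $\delta$-functors: they are natural in the module and commute with the connecting homomorphisms of any short exact sequence of $G$-modules (see \cite{neukirch2008cohomology}). Applying this to $0\to\Z/p\Z\to M\xrightarrow{p} M\to 0$ in degrees $0$ and $1$ gives a commutative square
\[
\begin{tikzcd}
H^0(H,M) \arrow[r,"\partial"] \arrow[d,"\on{Cor}^G_H"] & H^1(H,\Z/p\Z) \arrow[d,"\on{Cor}^G_H"] \\
H^0(G,M) \arrow[r,"\partial"] & H^1(G,\Z/p\Z).
\end{tikzcd}
\]
Finally, by definition $H^0(H,M)=M^H$, $H^0(G,M)=M^G$, and the corestriction in degree $0$ is the norm map $N_{G/H}\colon M^H\to M^G$; substituting these identifications turns the above square into exactly the one in the statement, completing the proof.

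There is no real obstacle here: the content is entirely the standard functoriality of corestriction with respect to long exact sequences, and the only point needing a word of care is the verification that the top map $\partial$ of the stated square is the $H$-cohomology connecting homomorphism (rather than some a priori different map), which is immediate from the construction of (\ref{connecting}) and \Cref{subgroup}. If a self-contained argument were preferred, one could instead unwind both sides on inhomogeneous cochains: lift $a\in M^H$ to $\beta\in M$ with $p\beta=a$, so that $\partial(a)$ is represented by $h\mapsto \iota^{-1}((h-1)\beta)$; then compute $\on{Cor}^G_H$ of this cocycle via a set of coset representatives for $G/H$ and compare with $\partial(N_{G/H}(a))$ computed from the lift $\sum_{gH\in G/H} g\beta$ of $N_{G/H}(a)$ — but citing the $\delta$-functor compatibility is cleaner.
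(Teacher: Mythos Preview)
Your proof is correct and follows essentially the same approach as the paper: both invoke the compatibility of corestriction with connecting homomorphisms and the identification of degree-$0$ corestriction with the norm map. Your extra care in checking that the top $\partial$ is the $H$-connecting homomorphism (via \Cref{subgroup}) is harmless but not strictly needed for the commutativity itself.
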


    \begin{proof}
    This follows from the fact that $\on{Cor}^G_H$ commutes with connecting homomorphisms (see \cite[Proposition (1.5.2)]{neukirch2008cohomology}) and the fact that $\on{Cor}^G_H$ coincides with $N_{G/H}$ in degree $0$ (see \cite[\S 5, 4. p.47]{neukirch2008cohomology}).
    \end{proof}
 
	\begin{prop}\label{cyclicext-module}
		Let $(G,\theta)$ be a $p$-oriented profinite group that admits a Hilbert 90 module. Let $\chi:G\to\Z/p\Z$ be a character and $H=\on{Ker}(\chi)$.
		Then the sequence
		\[
		H^1(H,\Z/p\Z)\xrightarrow{\on{Cor}} H^1(G,\Z/p\Z) \xrightarrow{\cup\chi} H^2(G,\Z/p\Z)\xrightarrow{\on{Res}} H^2(H,\Z/p\Z)
		\]
		is exact.
	\end{prop}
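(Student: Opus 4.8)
The plan is to reduce to the cyclotomic case and then transport the entire sequence into a Hilbert 90 module $M$, where both exactness assertions become consequences of inflation--restriction together with \Cref{serre-brauer} and \Cref{norm-cor}. First I would dispose of $\chi=0$, in which case $H=G$ and the sequence reads $H^1(G,\Z/p\Z)\xrightarrow{\on{id}}H^1(G,\Z/p\Z)\xrightarrow{0}H^2(G,\Z/p\Z)\xrightarrow{\on{id}}H^2(G,\Z/p\Z)$, which is exact. Assuming $\chi\neq 0$, so that $H$ is open and normal and $C:=G/H$ is cyclic of order $p$, set $G':=\theta^{-1}(1+p\Z_p)$: this is an open normal subgroup with $[G:G']$ dividing $p-1$ (so $G'=G$ when $p=2$), the pair $(G',\theta|_{G'})$ is cyclotomic, and $M$ is a Hilbert 90 module for it by \Cref{subgroup}. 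Since $[G:G']$ is prime to $[G:H]=p$ we get $G=G'H$, hence $\chi|_{G'}$ is surjective with kernel $H':=G'\cap H$ of index $p$ in $G'$. Granting the proposition in the cyclotomic case (established below, applied to $(G',\chi|_{G'})$), I would deduce it for $(G,\chi)$ by a transfer argument: the statements that successive maps compose to zero follow from $\on{Res}^G_H(\chi)=0$ together with multiplicativity of restriction and the projection formula, while for the reverse inclusions one restricts a class to $G'$ (using transitivity of restriction to see it lands in the appropriate kernel), applies exactness of the $G'$-sequence, corestricts the result back to $G$, and divides by $[G:G']$, which is invertible on the $p$-torsion groups $H^i(G,\Z/p\Z)$. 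Thus it suffices to treat the case where $(G,\theta)$ is cyclotomic.

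So assume $(G,\theta)$ cyclotomic and fix an injection $\iota\colon\Z/p\Z\hookrightarrow M$ as in \eqref{fix-iota}. Since $H^1(K,M)=0$ for every open $K\subseteq G$ by \Cref{subgroup}, the long exact cohomology sequence of $0\to\Z/p\Z\xrightarrow{\iota}M\xrightarrow{p}M\to 0$ shows two things: that $\partial\colon M^K\to H^1(K,\Z/p\Z)$ is surjective with kernel $pM^K$ (this is \eqref{connecting}), and that $\iota_*\colon H^2(K,\Z/p\Z)\to H^2(K,M)$ is injective with image $H^2(K,M)[p]$, compatibly with restriction. Consequently $\on{Res}\colon H^2(G,\Z/p\Z)\to H^2(H,\Z/p\Z)$ is identified, through $\iota_*$, with the restriction $H^2(G,M)\to H^2(H,M)$ on $p$-torsion, and it remains to compute $\ker\!\big(\on{Res}\colon H^2(G,M)\to H^2(H,M)\big)$ and to recognize it as a cup product.

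Here the Hochschild--Serre spectral sequence for $H\triangleleft G$ with coefficients in $M$ enters: because $H^1(H,M)=0$ it yields an exact sequence $0\to H^2(C,M^H)\xrightarrow{\on{Inf}}H^2(G,M)\xrightarrow{\on{Res}}H^2(H,M)$, so the kernel in question equals $\on{Inf}(H^2(C,M^H))$ and $\on{Inf}$ is injective. As $C$ is cyclic of order $p$, periodicity gives $H^2(C,M^H)\cong\hat H^0(C,M^H)=M^G/N_{G/H}(M^H)$. The crucial step I would carry out is to check that, for $a\in M^G$ with $\chi_a=\chi$ (which exists since $\partial$ is onto) and any $b\in M^G$, the inflation of the class of $b$ equals $b\cup\omega_a$ in $H^2(G,M)$: indeed $\omega_a=\on{Inf}^G_{G/H}(\bar\omega)$ for a generator $\bar\omega$ of $H^2(C,\Z)\cong\Z/p\Z$, the class $b$ lives in $H^0(C,M^H)=M^G$, cup product is compatible with inflation, and cup product with $\bar\omega$ is the periodicity isomorphism $H^0(C,M^H)\xrightarrow{\sim}H^2(C,M^H)$, which for a cyclic group is the natural projection onto $M^G/N_{G/H}(M^H)$. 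Combined with \Cref{serre-brauer}, which gives $\iota_*(\chi_a\cup\chi_b)=b\cup\omega_a$, and with the surjectivity of $\partial$, this yields $\ker\!\big(\on{Res}\colon H^2(G,M)\to H^2(H,M)\big)=\iota_*\big(\chi\cup H^1(G,\Z/p\Z)\big)$.

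Both exactness assertions then follow. For $H^2(G,\Z/p\Z)$: since $\iota_*$ is injective and commutes with restriction, a class $c$ satisfies $\on{Res}^G_H(c)=0$ if and only if $\iota_*(c)\in\iota_*(\chi\cup H^1(G,\Z/p\Z))$, i.e.\ $c\in\chi\cup H^1(G,\Z/p\Z)$. For $H^1(G,\Z/p\Z)$: writing an arbitrary class as $\chi_b$ with $b\in M^G$, anticommutativity of cup product and injectivity of $\iota_*$ give $\chi_b\cup\chi=0$ iff $b\cup\omega_a=0$, iff $b$ maps to $0$ in $M^G/N_{G/H}(M^H)$ by injectivity of $\on{Inf}$, iff $b\in N_{G/H}(M^H)$; and by \Cref{norm-cor} together with \eqref{connecting} (note $pM^G\subseteq N_{G/H}(M^H)$) this is precisely the condition that $\chi_b$ lie in the image of $\on{Cor}^G_H$. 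The step I expect to be the real obstacle is the identification of the inflated class of $b$ with $b\cup\omega_a$ — this is what matches the homological computation of $\ker(\on{Res})$ against the cup-product description in the statement — while the reduction to the cyclotomic case, though routine, requires some care with the transfer compatibilities.
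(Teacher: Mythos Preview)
Your argument is correct and follows essentially the same approach as the paper: reduce to the cyclotomic case by a transfer argument, then use inflation--restriction for the Hilbert 90 module $M$ (available since $H^1(H,M)=0$), periodicity for the cyclic quotient $C$, and the identification of $\iota_*(\chi_a\cup\chi_b)$ with $b\cup\omega_a$ (the paper rederives this via \cite[Proposition 3.4.9]{gille2017central}, you invoke \Cref{serre-brauer} directly) together with the injectivity of $\iota_*$. The one noteworthy difference is exactness at $H^1(G,\Z/p\Z)$: the paper simply cites \cite[Proposition 15.5]{merkurjev1982k-cohomology}, whereas you extract it from the same machinery via injectivity of $\on{Inf}$ and \Cref{norm-cor}, which makes your treatment more self-contained.
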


	\begin{proof}
	Let $G'\coloneqq \theta^{-1}(1+p\Z_p)$, let $\chi'\coloneqq \chi|_G$, and let $H'\coloneqq \on{Ker}(\chi')=H\cap G'$. Note that $G'$ is an open subgroup of $G$, the index $[G:G']$ divides $p-1$ and $(G',\theta|_{G'})$ is cyclotomic. Since $p$ does not divide $[G:G']$ and $[H\colon H']$, for all $i\geq 0$ the compositions
 \[H^i(G,\Z/p\Z)\xrightarrow{\on{Res}} H^i(G',\Z/p\Z)\xrightarrow{\on{Cor}}H^i(G,\Z/p\Z)\]
 and
 \[H^i(H,\Z/p\Z)\xrightarrow{\on{Res}} H^i(H',\Z/p\Z)\xrightarrow{\on{Cor}}H^i(H,\Z/p\Z)\]
 are isomorphisms. Therefore the sequence of \Cref{cyclicext-module} induced by $\chi$ is a direct summand of the sequence induced by $\chi'$. In particular, if the latter is exact, so is the former. Replacing $(G,\theta)$ by $(G',\theta|_{G'})$, we may thus assume that $(G,\theta)$ is cyclotomic.
  
  The conclusion is clear if $\chi=0$, hence we may assume that $\chi\neq 0$. We first prove exactness at $H^2(G,\Z/p\Z)$. Let $C\coloneqq G/H$, so that $C$ is a cyclic group of order $p$, and let $\cl{\chi}: C \to \mathbb{Z}/p\mathbb{Z}$ be the homomorphism induced by $\chi$. Let $M$ be a Hilbert 90 module for $(G,\theta)$, fix an inclusion $\iota\colon \Z/p\Z\hookrightarrow M$, and write $\iota_*\colon H^2(G,\Z/p\Z)\to H^2(G,M)$ for the induced map.

  Let $\alpha\in H^2(G,\Z/p\Z)$ be such that $\on{Res}^G_H(\alpha)=0$, and set $\alpha_1\coloneqq \iota_*(\alpha)$. Since $H^1(H, M)=0$, by \cite[Proposition (1.6.7)]{neukirch2008cohomology} the sequence
		\[
		0\to H^2(C,M^H)\xrightarrow{\on{Inf}} H^2(G,M) \xrightarrow{\on{Res}} H^2(H,M)
		\]
		is exact. As $\alpha$ restricts to zero on $H$, so does $\alpha_1$, and so there is $\alpha_2\in H^2(C,M^H)$ such that $\alpha_1=\on{Inf}^G_C(\alpha_2)$. Let \[\beta\colon H^1(G,\Z/p\Z)\to H^2(G,\Z),\qquad \cl{\beta}\colon H^1(C,\Z/p\Z)\to H^2(C,\Z),\qquad\] be the connecting homomorphisms associated to the short exact sequence \[0\to \Z\xrightarrow{p}\Z\to\Z/p\Z\to 0,\]
  viewed as a sequence of trivial $G$-modules and trivial $C$-modules, respectively.
  Since cup products commute with inflation maps (see \cite[Proposition (1.5.3)(ii)]{neukirch2008cohomology}), we have a commutative square
\[
    \begin{tikzcd}
        H^0(C,M^H) \arrow[r,"\cup\cl{\beta}(\cl{\chi})"] \arrow[d,equal,"\on{Inf}"] & H^2(C,M^H) \arrow[d,"\on{Inf}"] \\
        H^0(G,M) \arrow[r,"\cup\beta(\chi)"] & H^2(G,M). 
    \end{tikzcd}
\]
Note that $H^0(C,M^H)=H^0(G,M)=M^G$. As the group $C$ is cyclic, the top horizontal map is surjective; see \cite[Proposition (1.7.1)]{neukirch2008cohomology}. Thus, there exists $m\in M^G$ such that $m\cup\cl{\beta}(\cl{\chi})=\alpha_2$ in $H^2(C,M^H)$, and hence $m\cup\beta(\chi)=\alpha_1$ in $H^2(G,M)$. 

  By \cite[Proposition 3.4.9]{gille2017central} 
  applied to the pairing of $G$-modules $M\times \Z\to M$ given by $(v,n)\mapsto nv$, the diagram
	\[\begin{tikzcd}    H^1(G,M[p]) \otimes  H^1(G,\mathbb{Z}/p\mathbb{Z}) \arrow[d,"\beta", {xshift=5ex}] \arrow[r,"\cup"] & H^2(G,\Z/p\Z) \arrow[d,"\iota_*"] \\
		H^0(G,M) \arrow[u,"\partial",{xshift=-5ex}] \otimes H^2(G,\mathbb{Z}) \arrow[r,"\cup"] & H^2(G,M),
	\end{tikzcd}
	\]
  is anti-commutative, that is, $\iota_*(\partial(v)\cup \psi) =- v\cup\beta(\psi)$ for all $v\in H^0(G,M)=M^G$ and $\psi\in H^1(G,\Z/p\Z)$. Therefore 
  \[
  \iota_*(\alpha)=\alpha_1=m\cup\beta(\chi)=-\iota_*(\partial(m)\cup \chi)=\iota_*(-\partial(m)\cup\chi).
  \]
Since $H^1(G,M)=0$, the map $\iota_*$ is injective, and hence $\alpha=-\partial(m)\cup\chi$. This proves exactness at $H^2(G,\Z/p\Z)$. Finally, by \cite[Proposition 15.5]{merkurjev1982k-cohomology}, the sequence is also exact at $H^1(G,\Z/p\Z)$.
	\end{proof}

    \begin{cor}\label{cup-norm}
        Let $(G,\theta)$ be a $p$-oriented profinite group and let $M$ be a Hilbert 90 module for $(G,\theta)$. Let $a,b\in M^G$ be such that $\chi_a\cup\chi_b=0$ in $H^2(G,\Z/p\Z)$. Then there exists $\alpha\in M^{G_a}$ such that $N_{G/G_a}(\alpha)=b$ in $M^G$.
    \end{cor}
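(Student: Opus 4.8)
\emph{Plan.} The idea is to feed the vanishing $\chi_a\cup\chi_b=0$ into \Cref{cyclicext-module}, lift the resulting class along the boundary map $\partial$ of (\ref{connecting}), and then correct by a suitable element of $M^G$. Since the symbols $\chi_a,\chi_b$ presuppose the cyclotomic setting of (\ref{fix-iota})--(\ref{connecting}), we may assume that $(G,\theta)$ is cyclotomic. If $\chi_a=0$ then $G_a=G$ and $N_{G/G_a}=\on{id}_{M^G}$, so $\alpha=b$ works; assume henceforth $\chi_a\neq 0$, so that $G_a$ is open of index $p$ in $G$ and $C\coloneqq G/G_a$ is cyclic of order $p$.

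First I would apply \Cref{cyclicext-module} to the character $\chi\coloneqq\chi_a$, whose kernel is $G_a$, getting the exact sequence
\[
H^1(G_a,\Z/p\Z)\xrightarrow{\on{Cor}} H^1(G,\Z/p\Z)\xrightarrow{\cup\chi_a} H^2(G,\Z/p\Z).
\]
By (anti)commutativity of the cup product of degree-one classes, $\chi_a\cup\chi_b=0$ forces $\chi_b\cup\chi_a=0$, so exactness at $H^1(G,\Z/p\Z)$ yields $\psi\in H^1(G_a,\Z/p\Z)$ with $\on{Cor}^G_{G_a}(\psi)=\chi_b$. By \Cref{subgroup}, $M$ is a Hilbert 90 module for the cyclotomic pair $(G_a,\theta|_{G_a})$, so the analogue of (\ref{connecting}) for $G_a$ (with the same $\iota$) shows that $\partial\colon M^{G_a}\to H^1(G_a,\Z/p\Z)$ is surjective; choose $\gamma\in M^{G_a}$ with $\partial(\gamma)=\psi$. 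Then \Cref{norm-cor} gives
\[
\partial(N_{G/G_a}(\gamma))=\on{Cor}^G_{G_a}(\partial(\gamma))=\on{Cor}^G_{G_a}(\psi)=\chi_b=\partial(b),
\]
so $N_{G/G_a}(\gamma)-b\in\on{Ker}(\partial\colon M^G\to H^1(G,\Z/p\Z))$, which equals $p(M^G)$ by the exactness of (\ref{connecting}).

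The only point that needs a moment's care is that the previous step pins down $N_{G/G_a}(\gamma)$ only modulo $p(M^G)$. Write $N_{G/G_a}(\gamma)-b=pc$ with $c\in M^G$. Since $c$ is $G$-invariant and $[G:G_a]=p$, one has $N_{G/G_a}(c)=\sum_{gG_a\in C}gc=pc$; hence $\alpha\coloneqq\gamma-c\in M^{G_a}$ satisfies $N_{G/G_a}(\alpha)=N_{G/G_a}(\gamma)-pc=b$, as desired. I do not expect a genuine obstacle here: the statement is essentially the degree-$0$ shadow of \Cref{cyclicext-module}, and the argument is just a concatenation of results already established together with this harmless final correction of $\gamma$ by an element of $M^G$.
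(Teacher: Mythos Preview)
Your proof is correct and follows essentially the same route as the paper: apply \Cref{cyclicext-module} to $\chi=\chi_a$ to produce $\psi\in H^1(G_a,\Z/p\Z)$ with $\on{Cor}^G_{G_a}(\psi)=\chi_b$, lift $\psi$ along $\partial$ to some $\gamma\in M^{G_a}$, use \Cref{norm-cor} and the exactness of (\ref{connecting}) to see that $N_{G/G_a}(\gamma)-b\in p(M^G)$, and then correct $\gamma$ by an element of $M^G$ using $N_{G/G_a}(c)=pc$. Your write-up is in fact slightly more careful than the paper's, since you make explicit the cyclotomic assumption, the trivial case $\chi_a=0$, and the anticommutativity step $\chi_a\cup\chi_b=0\Rightarrow\chi_b\cup\chi_a=0$.
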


    \begin{proof}
    By \Cref{cyclicext-module} applied to $\chi=\chi_a$, there exists $\psi\in H^1(G_a,\Z/p\Z)$ such that $\on{Cor}^G_{G_a}(\psi)=\chi_b$. Let $\alpha'\in M^{G_a}$ be such that $\psi=\chi_{\alpha'}$. By \Cref{norm-cor}, we have $N_{G/G_a}(\alpha')=b+pu$ for some $u\in M^G$. Define $\alpha\coloneqq \alpha'-u\in M^{G_a}$. Then $N_{G/G_a}(\alpha)=b$, as desired.
    \end{proof}

	\section{Proof of Theorem \ref{hilbert-90-equiv}}\label{section4}
	
	\begin{proof}[Proof of \Cref{hilbert-90-equiv}]
    Suppose first that $(G,\theta)$ admits a Hilbert 90 module $M$. We have a commutative diagram of $G$-modules with exact rows
		\[\begin{tikzcd}    0 \arrow[r] & S[p^n] \arrow[d] \arrow[r] & M \arrow[d,"p^{n-1}"] \arrow[r,"p^{n}"] & M \arrow[d,equals] \arrow[r] & 0 \\
			0 \arrow[r] & S[p] \arrow[r] & M \arrow[r,"p"] & M \arrow[r] & 0.
		\end{tikzcd}
		\]
		Therefore, for every open subgroup $H$ of $G$, we obtain a commutative diagram with exact rows
		\[
		\begin{tikzcd}
			M^H \arrow[d,equals] \arrow[r] & H^1(H,S[p^n]) \arrow[d] \arrow[r] & H^1(H,M) \arrow[d,"p^{n-1}"] \\
			M^H \arrow[r] & H^1(H,S[p]) \arrow[r] & H^1(H,M).
		\end{tikzcd}
		\]
		Since $H^1(H,M) = 0$, we deduce that the map $H^1(H,S[p^n])\to H^1(H,S[p])$ is surjective, as desired.

\medskip
    
	Conversely, suppose that the pair $(G,\theta)$ satisfies formal Hilbert 90. 
 Let $H$ be an open subgroup of $G$, and fix $\sigma\in Z^1(H, S[p])$.
		By \Cref{surjective-induction}, we may find a sequence $(\sigma_n)_{n\geq 1}$, where $\sigma_n\in Z^1(H,S[p^n])$, $\sigma_1=\sigma$ and the map $Z^1(H,S[p^n])\to Z^1(H,S[p^{n-1}])$ sends $\sigma_n$ to $\sigma_{n-1}$ for all $n\geq 2$.
  
  \begin{lemma}\label{mult-by-p}
       Write $\cl{\sigma}_{n}$ for the image of $\sigma_{n}$ under $Z^1(H,S[p^n])\to Z^1(H,S)$. For all $n\geq 1$ and all $x\in \Z_{(p)}$, we have $x\cl{\sigma}_n=px\cl{\sigma}_{n+1}$ in $Z^1(H,S)$.
  \end{lemma}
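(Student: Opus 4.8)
The plan is to unwind the definition of the sequence $(\sigma_n)_{n\geq 1}$ and then invoke $\Z_{(p)}$-linearity; the statement will turn out to be essentially formal.

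First I would pin down the transition maps in the tower $(\sigma_n)$. By the proof of \Cref{surjective-induction}, the surjection $Z^1(H,S[p^{n+1}])\to Z^1(H,S[p^n])$ used to build the sequence is the one induced by the $G$-module homomorphism $S[p^{n+1}]\to S[p^n]$ given by multiplication by $p$ (the surjection with kernel $S[p]$ occurring in the short exact sequence $0\to S[p]\to S[p^{n+1}]\xrightarrow{p} S[p^n]\to 0$; here surjectivity uses that $S$ is $p$-divisible). Post-composing with the inclusion $S[p^n]\hookrightarrow S$, the defining relation $\sigma_{n+1}\mapsto\sigma_n$ becomes the identity $\cl\sigma_n=p\,\cl\sigma_{n+1}$ in $Z^1(H,S)$, valid for every $n\geq 1$.

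Next I would observe that for every $x\in\Z_{(p)}$, multiplication by $x$ is an endomorphism of the $G$-module $S$: the group $G$ acts on $S=\Q/\Z_{(p)}$ through the scalars $\theta(g)\in\Z_p^{\times}$, and, $S$ being a $\Z_p$-module, scalar multiplications on it commute, so multiplication by $x$ is $G$-equivariant. It therefore induces a $\Z_{(p)}$-linear endomorphism of $Z^1(H,S)$, compatible with the maps $Z^1(H,S[p^n])\to Z^1(H,S)$ since everything is defined pointwise. Applying this endomorphism to $\cl\sigma_n=p\,\cl\sigma_{n+1}$ yields $x\,\cl\sigma_n=x p\,\cl\sigma_{n+1}=p x\,\cl\sigma_{n+1}$, which is the assertion.

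The only point requiring care — and hence the main obstacle, such as it is — is the bookkeeping in the first step: one must check that the connecting map defining the tower $(\sigma_n)$ is multiplication by $p$ rather than a reduction or inclusion map, so that after including into $S$ it becomes multiplication by $p$ on $Z^1(H,S)$. Once this is settled, the remaining steps are purely formal manipulations with the $\Z_{(p)}$-module structure on $Z^1(H,S)$, and no cohomological input beyond the construction of the $\sigma_n$ is needed.
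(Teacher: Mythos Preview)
Your proposal is correct and follows the same approach as the paper: the paper draws the commutative square
\[
\begin{tikzcd}
S[p^{n+1}] \arrow[r,hook] \arrow[d,"p"] & S \arrow[d,"p"]\\
S[p^{n}] \arrow[r,hook] & S
\end{tikzcd}
\]
and passes to cocycles to obtain $\cl\sigma_n=p\,\cl\sigma_{n+1}$, which is exactly your first step stated diagrammatically; the multiplication by $x\in\Z_{(p)}$ is then immediate in both arguments.
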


  \begin{proof}
  We have a commutative square $H$-modules
      \[
    \begin{tikzcd}
        S[p^n]  \arrow[r,hook] \arrow[d,"p"] & S \arrow[d,"p"]\\
        S[p^{n-1}] \arrow[r,hook] & S.
    \end{tikzcd}
  \]
  Passing to cocycles, we get a commutative square
  \[
    \begin{tikzcd}
        Z^1(H,S[p^n]) \arrow[r] \arrow[d] & Z^1(H,S) \arrow[d,"p"]  \\
        Z^1(H,S[p^{n-1}]) \arrow[r] & Z^1(H,S),
    \end{tikzcd}
  \]
 from which the conclusion follows.
  \end{proof}
		
		Consider the uniquely divisible group $Q_H\coloneqq \oplus_{\sigma} \Q u_{\sigma}$, where $\sigma$ ranges over all elements of $Z^1(H,S[p])$ and the $u_{\sigma}$ are formal symbols. 
        
        We now define an $H$-module $M_H$. As an abelian group, $M_H\coloneqq S\oplus Q_H$. The group $H$ acts on $M_H$ as follows: for all $h\in H$, $\sigma\in Z^1(H,S[p])$, $s\in S$, $a\in \Q$, if $n$ is a positive integer such that $p^n a\in\Z_{(p)}$, then
        \begin{equation}\label{formula-general}
			h(s,au_{\sigma})\coloneqq(\theta(h)s+p^n a\cl{\sigma}_n(h), au_{\sigma}).
		\end{equation}
    By \Cref{mult-by-p}, this definition is independent on the choice of $n$. We have a short exact sequence of $H$-modules
        \begin{equation}\label{m-h-exact}0\to S\to M_H\to Q_H\to 0,\end{equation}
    where the map $S\to M_H$ is the inclusion into the first factor and the map $M_H\to Q_H$ is the projection onto the second factor. 

    \begin{lemma}\label{m-h-lemma}
    The map $H^1(H,S[p])=H^1(H,M_H[p])\to H^1(H,M_H)$ is trivial.
    \end{lemma}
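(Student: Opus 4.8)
The plan is to exhibit, for every cocycle $\sigma\in Z^1(H,S[p])$, an explicit $0$-cochain in $M_H$ whose coboundary represents the image of $\sigma$. First I would record the harmless identification $M_H[p]=S[p]$ underlying the statement: since $Q_H$ is torsion-free, an element $(s,q)\in S\oplus Q_H$ is killed by $p$ precisely when $q=0$ and $s\in S[p]$, and (\ref{formula-general}) shows that $H$ acts on this subgroup by $h\cdot(s,0)=(\theta(h)s,0)$, i.e.\ exactly as on $S[p]$. Moreover the inclusion $M_H[p]=S[p]\hookrightarrow M_H$ factors as $S[p]\hookrightarrow S\hookrightarrow M_H$, where the first arrow is the inclusion of the $p$-torsion of $S$ and the second is the inclusion from (\ref{m-h-exact}). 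Hence it is enough to show that every class in $H^1(H,S[p])$ dies under the composite $H^1(H,S[p])\to H^1(H,S)\to H^1(H,M_H)$.

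Next I would represent a given class by a cocycle $\sigma\in Z^1(H,S[p])$. Because $Q_H=\bigoplus_{\tau}\Q u_{\tau}$ with $\tau$ ranging over all of $Z^1(H,S[p])$, we have the element $\tfrac1p u_\sigma\in\Q u_\sigma\subset Q_H\subset M_H$ at our disposal, and I would compute its coboundary directly from (\ref{formula-general}) with $a=1/p$ and $n=1$: then $p^n a=1\in\Z_{(p)}$, the cocycle term equals $\cl{\sigma}_1(h)=\cl{\sigma}(h)$ since $\sigma_1=\sigma$, so $h\cdot\tfrac1p u_\sigma=(\cl{\sigma}(h),\tfrac1p u_\sigma)$ and therefore $(h-1)\cdot\tfrac1p u_\sigma=(\cl{\sigma}(h),0)$. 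Thus the image of the cocycle $\cl{\sigma}$ in $Z^1(H,M_H)$ is the coboundary of the $0$-cochain $\tfrac1p u_\sigma$, so $[\cl{\sigma}]=0$ in $H^1(H,M_H)$; since the image of $[\sigma]\in H^1(H,S[p])$ in $H^1(H,M_H)$ is exactly $[\cl{\sigma}]$, this finishes the argument.

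I do not expect a genuine obstacle here: the module $M_H$ was built precisely so as to contain the divided symbols $\tfrac1p u_\sigma$ that trivialize the reduced cocycles $\cl{\sigma}$, and the heart of the proof is the one-line computation of $(h-1)\cdot\tfrac1p u_\sigma$. The only points that require a little care are the two identifications in the first step and an appeal to \Cref{mult-by-p}, which ensures that (\ref{formula-general}) is well defined so that this computation does not depend on the choice of the auxiliary integer $n$.
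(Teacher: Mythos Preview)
Your proposal is correct and follows essentially the same approach as the paper: you take $\sigma\in Z^1(H,S[p])$, produce the element $(0,\tfrac1p u_\sigma)\in M_H$, and compute from (\ref{formula-general}) that its coboundary is $(\cl{\sigma}(h),0)$. The paper's proof is exactly this one-line computation; your additional remarks on the identification $M_H[p]=S[p]$ and the factorization through $S$ are correct elaborations of points the paper leaves implicit.
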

		  
		\begin{proof}
        Let $\sigma\in Z^1(H,S[p])$ be a $1$-cocycle, and set $m_{\sigma}\coloneqq (0, \frac{1}{p}u_{\sigma})\in M_H$.
		By (\ref{formula-general}), for all $h\in H$ we have $(h-1)m_{\sigma}=(\cl{\sigma}_1(h),0)=(\cl{\sigma}(h),0)$, that is, the image of $\sigma$ in $Z^1(H,M_H)$ is equal to the coboundary of $m_{\sigma}$.  
		\end{proof}
  Consider the following diagram of short exact sequence of $G$-modules
    \begin{equation}\label{m-h-tilde}
    \begin{tikzcd}
    0\arrow[r] & \on{Ind}_H^G(S) \arrow[r] \arrow[d,"\nu"] & \on{Ind}_H^G(M_H) \arrow[r] \arrow[d]  & \on{Ind}_H^G(Q_H) \arrow[r]\arrow[d,equal]  & 0\\
    0\arrow[r] & S \arrow[r] & \widetilde{M}_H \arrow[r] & \on{Ind}_H^G(Q_H) \arrow[r] & 0,
    \end{tikzcd}
    \end{equation}
    where the top row is induced by (\ref{m-h-exact}), $\nu$ is the map of (\ref{ind-norm}) for $A=S$, and $\widetilde{M}_H$ is defined as the pushout of the left square.
		If $N$ is an $H$-module, the induced $G$-module $\on{Ind}_H^G(N)$ is a $G$-module whose underlying abelian group is a direct sum of $[G:H]$ copies of $N$. In particular, the functor $\on{Ind}_H^G(-)$ is exact. This explains the exactness of the top row in (\ref{m-h-tilde}).

  \begin{lemma}\label{m-h-tilde-lemma}
  The map $H^1(H,S[p])=H^1(H,\widetilde M_H[p])\to H^1(H,\widetilde M_H)$ is trivial.
  \end{lemma}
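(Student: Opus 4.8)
The plan is to mimic the proof of \Cref{m-h-lemma}: we show that the inclusion $S[p]=\widetilde M_H[p]\hookrightarrow \widetilde M_H$ factors through $M_H$ as a map of $H$-modules, so that \Cref{m-h-lemma} applies directly.

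First I would check that the bottom row of (\ref{m-h-tilde}) identifies $S[p]$ with $\widetilde M_H[p]$ via the inclusion $j\colon S\to\widetilde M_H$: since $Q_H$ is a $\Q$-vector space, the group $\on{Ind}_H^G(Q_H)$ is a subgroup of the group of all maps $G\to Q_H$ and hence is torsion-free, so $j$ restricts to an isomorphism $S[p]\xrightarrow{\sim}\widetilde M_H[p]$. Thus the map in the statement is the one induced by $j|_{S[p]}\colon S[p]\to\widetilde M_H$.

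Next I would produce an $H$-module map $s\colon M_H\to\on{Ind}_H^G(M_H)$ splitting evaluation at $1$, defined by $s(m)(g)=gm$ for $g\in H$ and $s(m)(g)=0$ for $g\notin H$. The verifications that $s(m)$ is continuous and lies in $\on{Ind}_H^G(M_H)$ (here one uses that $M_H$ is a discrete $H$-module and $H$ is open in $G$) and that $s$ is additive and $H$-equivariant are routine. The point is the compatibility of $s$ with (\ref{m-h-tilde}): for $a\in S\subseteq M_H$ one has $s(a)\in\on{Ind}_H^G(S)$, and only the coset of $1$ contributes to the sum (\ref{ind-norm}) defining $\nu$, so $\nu(s(a))=a$; hence, writing $q\colon\on{Ind}_H^G(M_H)\to\widetilde M_H$ for the middle vertical map of (\ref{m-h-tilde}) and using commutativity of its left-hand square, the composite $q\circ s\colon M_H\to\widetilde M_H$ restricts on $S$ to $j$.

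Consequently $j|_{S[p]}$ equals the composite of $H$-module maps $S[p]\hookrightarrow M_H\xrightarrow{q\circ s}\widetilde M_H$. Applying $H^1(H,-)$, the map $H^1(H,S[p])\to H^1(H,\widetilde M_H)$ factors through $H^1(H,M_H)$; since $H^1(H,S[p])=H^1(H,M_H[p])\to H^1(H,M_H)$ is trivial by \Cref{m-h-lemma}, the conclusion follows. I do not anticipate a genuine obstacle; the only care needed is the bookkeeping — making sure that the copy of $S[p]$ appearing in the statement, the copy sitting inside $M_H$, and the copy sitting inside $\widetilde M_H$ are identified compatibly, and that $s$ really intertwines $\nu$ with the identity on $S$.
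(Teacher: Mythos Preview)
Your proposal is correct and follows essentially the same approach as the paper: the paper's proof simply asserts that the inclusion of $S[p]=\widetilde M_H[p]$ into $\widetilde M_H$ factors as $S[p]\to M_H\to \on{Ind}_H^G(M_H)\to \widetilde M_H$ and then invokes \Cref{m-h-lemma}, without spelling out the middle map or the compatibility with $\nu$. Your argument makes this explicit by writing down the $H$-equivariant section $s$ and checking $\nu\circ s|_S=\on{id}_S$, which is exactly the verification needed to justify the paper's one-line factorization.
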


  \begin{proof}
    This follows from the fact that the inclusion of $S[p]=\widetilde M_H[p]$ into $\widetilde M_H$ factors as the composition of the $H$-module homomorphisms
		\[
		S[p]\to M_H\to \on{Ind}_H^G(M_H)\to \widetilde M_H
		\]
		and the homomorphism $H^1(H,S[p])\to H^1(H,M_H)$ is trivial by \Cref{m-h-lemma}.
  \end{proof}

Consider the following diagram of $G$-modules with exact rows
\begin{equation}\label{m}
    \begin{tikzcd}
    0\arrow[r] &  \bigoplus_{H\subset G}S  \arrow[r] \arrow[d] & \bigoplus_{H\subset G} \widetilde{M}_H \arrow[r] \arrow[d]  & \bigoplus_{H\subset G}\on{Ind}_H^G(Q_H) \arrow[r]\arrow[d,equal]  & 0\\
    0\arrow[r] & S \arrow[r] & M \arrow[r] & \bigoplus_{H\subset G}\on{Ind}_H^G(Q_H) \arrow[r] & 0.
    \end{tikzcd}
    \end{equation}
    Here $H$ ranges over all open subgroups of $G$, the top row is the direct sum of the bottom rows in (\ref{m-h-tilde}), the left vertical map is the identity on every summand, and $M$ is defined so that the left square is a pushout.

    \begin{lemma}\label{explicit-h-90-module}
    The $G$-module $M$ is a Hilbert 90 module for $(G,\theta)$.
    \end{lemma}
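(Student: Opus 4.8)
The plan is to verify the three conditions (i), (ii), (iii) of the definition of a Hilbert 90 module for the $G$-module $M$ sitting in the bottom row of (\ref{m}), i.e.\ in the short exact sequence $0\to S\to M\to Q\to 0$ with $Q\coloneqq\bigoplus_{H}\on{Ind}_H^G(Q_H)$. Conditions (i) and (ii) are soft. Since $S=\Q/\Z_{(p)}$ is divisible and $p$-primary torsion, and each $Q_H$ is a $\Q$-vector space (hence so is each $\on{Ind}_H^G(Q_H)$, the subgroup $H$ being open in $G$), the module $Q$ is a $\Q$-vector space, in particular torsion-free and $p$-divisible. Applying the snake lemma to multiplication by $p$ on $0\to S\to M\to Q\to 0$ gives $pM=M$ because $S$ and $Q$ are $p$-divisible; and the torsion-freeness of $Q$ forces $M\{p\}=M_{\on{tors}}=S$, with the inclusion $S\hookrightarrow M$ providing the required isomorphism of $G$-modules. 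Thus (i) and (ii) hold.

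The substance is condition (iii): $H^1(H,M)=0$ for every open subgroup $H\subseteq G$. First, the long exact cohomology sequence of $0\to S\to M\to Q\to 0$ shows that the map $H^1(H,S)\to H^1(H,M)$ induced by $S\hookrightarrow M$ is surjective, because $H^1(H,Q)=0$: indeed $H^1(H,-)$ commutes with the direct sum defining $Q$, and for each open $H'\subseteq G$ the discrete $G$-module $\on{Ind}_{H'}^G(Q_{H'})$ is uniquely divisible, so $H^1(H,\on{Ind}_{H'}^G(Q_{H'}))=\on{colim}_U H^1(H/U,\on{Ind}_{H'}^G(Q_{H'})^U)=0$, the invariants being $\Q$-vector spaces and the quotients $H/U$ finite. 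It therefore suffices to prove that this same map $H^1(H,S)\to H^1(H,M)$ is the \emph{zero} map.

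Because $S=\on{colim}_n S[p^n]$ and $H^1(H,-)$ commutes with filtered colimits, it is enough to show that for every $n\geq1$ the map $f_n\colon H^1(H,S[p^n])\to H^1(H,M)$ induced by $S[p^n]\hookrightarrow S\hookrightarrow M$ vanishes, and I would argue by induction on $n$. For $n=1$: unwinding the pushout squares in (\ref{m-h-tilde}) and (\ref{m}) one checks that the composite $S[p]\hookrightarrow S\hookrightarrow M$ equals $S[p]\hookrightarrow\widetilde M_H\to M$, where the second arrow is the one occurring in (\ref{m}) (the point being that the copy of $S$ inside $\widetilde M_H$ is carried to the copy of $S$ inside $M$); hence $f_1$ factors through $H^1(H,\widetilde M_H)$ and vanishes by \Cref{m-h-tilde-lemma}. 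For the inductive step, let $\sigma\in Z^1(H,S[p^n])$; then $p^{n-1}\sigma$ takes values in $S[p]$, so its image in $Z^1(H,M)$ is a coboundary, say $p^{n-1}\sigma(h)=(h-1)m$ for some $m\in M$. Choosing $m'\in M$ with $p^{n-1}m'=m$ (possible since $pM=M$), the map $h\mapsto\sigma(h)-(h-1)m'$ is a cocycle with values in $M[p^{n-1}]=S[p^{n-1}]$ that is cohomologous to $\sigma$ in $Z^1(H,M)$, so $f_n[\sigma]=f_{n-1}[\sigma-\partial m']=0$ by the inductive hypothesis. Hence $f_n=0$ for all $n$, so $H^1(H,S)\to H^1(H,M)$ is zero; being simultaneously surjective and zero, it forces $H^1(H,M)=0$, which is (iii).

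The main obstacle, such as it is, is the bookkeeping with the two pushouts of (\ref{m-h-tilde}) and (\ref{m}) needed in the case $n=1$, namely checking that $f_1$ genuinely factors through $\widetilde M_H$ so that \Cref{m-h-tilde-lemma} applies; once the $p$-divisibility of $M$ and the identification $M_{\on{tors}}=S$ are in hand, the rest is a routine diagram chase together with the compatibility of cohomology with colimits.
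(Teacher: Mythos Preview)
Your proof is correct and shares the same core input as the paper's: the factorization $S[p]\hookrightarrow\widetilde M_H\to M$ and \Cref{m-h-tilde-lemma}, together with the divisibility properties of $M$ coming from the extension $0\to S\to M\to Q\to 0$. The paper's argument is organized a little differently. Rather than proving by induction that each $f_n\colon H^1(H,S[p^n])\to H^1(H,M)$ vanishes and then invoking surjectivity of $H^1(H,S)\to H^1(H,M)$, the paper uses only the case $n=1$: from the short exact sequence $0\to S[p]\to M\xrightarrow{p}M\to 0$ one gets $H^1(H,S[p])\to H^1(H,M)\xrightarrow{p}H^1(H,M)$ exact, and since the first map is zero (by \Cref{m-h-tilde-lemma} and the factorization), multiplication by $p$ on $H^1(H,M)$ is injective. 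Combined with unique $p'$-divisibility of $M$ for $p'\neq p$, this shows $H^1(H,M)$ has trivial $\ell$-torsion for every prime $\ell$, and one finishes using that profinite cohomology in positive degree is torsion. So the paper trades your induction for the single observation that $p$ acts injectively on $H^1(H,M)$; your route instead trades the appeal to ``$H^1$ is torsion'' at the very end for the surjectivity argument via $H^1(H,Q)=0$ (though that vanishing itself ultimately rests on the same torsion fact). Both are short and clean.
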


    \begin{proof}
     By (\ref{m}), the $G$-module $M$ is an extension of a uniquely divisible $G$-module by the $p$-primary torsion $G$-module $S$. It follows that $M$ is divisible and uniquely divisible by all primes $p'\neq p$. In particular, $M=pM$ and $M_{\on{tors}}=S$.
        
    Let $H\subset G$ be an open subgroup. By (\ref{m}), the embedding of $S$ into $M$ factors as $S\to \widetilde M_H\to M$. Therefore, by \Cref{m-h-tilde-lemma}, the homomorphism \[H^1(H,S[p])=H^1(H,M[p])\to H^1(H,M)\] is trivial. The short exact sequence \[0\to S[p]\to M\xrightarrow{p} M \to 0\] yields an exact sequence
		\[
		H^1(H,S[p])\to H^1(H,M) \xrightarrow{p} H^1(H,M).
		\]
		Since the first homomorphism is trivial, this implies that $H^1(H,M)[p]=0$. For all primes $p'\neq p$,  the subgroup $H^1(H,M)[p']$ is trivial because $M$ is uniquely $p'$-divisible. We conclude that $H^1(H,M)=0$. This completes the proof.
    \end{proof}
    By \Cref{explicit-h-90-module}, the $p$-oriented profinite group $(G,\theta)$ admits a Hilbert 90 module. This completes the proof of \Cref{hilbert-90-equiv}.
    \end{proof}

	\begin{cor}\label{cyclicext}
		Let $(G,\theta)$ be a $p$-oriented profinite group that satisfies formal Hilbert 90. Let $\chi:G\to\Z/p\Z$ be a character and set $H\coloneqq\on{Ker}(\chi)$.
		Then the sequence
		\[
		H^1(H,\Z/p\Z)\xrightarrow{\on{Cor}} H^1(G,\Z/p\Z) \xrightarrow{\cup\chi} H^2(G,\Z/p\Z)\xrightarrow{\on{Res}} H^2(H,\Z/p\Z)
		\]
		is exact.
	\end{cor}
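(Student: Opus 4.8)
The plan is simply to reduce the statement to \Cref{cyclicext-module} by invoking the equivalence established in \Cref{hilbert-90-equiv}. Since $(G,\theta)$ satisfies formal Hilbert 90 by hypothesis, \Cref{hilbert-90-equiv} guarantees that $(G,\theta)$ admits a Hilbert 90 module $M$. With such an $M$ in hand, \Cref{cyclicext-module} applies directly to the character $\chi\colon G\to\Z/p\Z$ with $H=\on{Ker}(\chi)$, and it yields precisely the exactness of
\[
H^1(H,\Z/p\Z)\xrightarrow{\on{Cor}} H^1(G,\Z/p\Z) \xrightarrow{\cup\chi} H^2(G,\Z/p\Z)\xrightarrow{\on{Res}} H^2(H,\Z/p\Z),
\]
which is the assertion. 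So the only content of the corollary is the translation ``formal Hilbert 90'' $\rightsquigarrow$ ``admits a Hilbert 90 module'', and that translation is exactly the non-trivial direction of \Cref{hilbert-90-equiv}.

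In other words, no genuinely new argument is needed here: all the real work has already been carried out, on the one hand in the proof of \Cref{hilbert-90-equiv} (the explicit construction, out of the formal Hilbert 90 property, of a $p$-divisible $G$-module $M$ with $M\{p\}\simeq S$ and $H^1(H,M)=0$ for every open $H\subset G$), and on the other hand in the proof of \Cref{cyclicext-module} (the reduction to the cyclotomic case, the inflation--restriction sequence for $M$, the surjectivity of cup product with $\cl\beta(\cl\chi)$ for the cyclic quotient $C=G/H$, and the compatibility of $\partial$ with Bocksteins via \Cref{serre-brauer}, together with \cite[Proposition 15.5]{merkurjev1982k-cohomology} for exactness at $H^1$).

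The only point that requires any care — and it is purely bookkeeping rather than a real obstacle — is checking that the hypotheses of \Cref{cyclicext-module} are met verbatim: that proposition is stated for a $p$-oriented profinite group \emph{admitting} a Hilbert 90 module, which is exactly the output of \Cref{hilbert-90-equiv} applied to our $(G,\theta)$. Once this is observed, the proof is a single line.

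\begin{proof}
    By \Cref{hilbert-90-equiv}, since $(G,\theta)$ satisfies formal Hilbert 90, it admits a Hilbert 90 module. The conclusion now follows from \Cref{cyclicext-module}.
\end{proof}
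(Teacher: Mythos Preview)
Your proof is correct and matches the paper's own proof, which is the single line ``Combine \Cref{hilbert-90-equiv} and \Cref{cyclicext-module}.'' Your extended discussion of what goes into those two results is accurate commentary, but the formal proof you give at the end is verbatim the paper's argument.
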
 

 \begin{proof}
     Combine \Cref{hilbert-90-equiv} and \Cref{cyclicext-module}.
 \end{proof}

\begin{rmk}\label{always-exact}
    When $p=2$ the sequence of \Cref{cyclicext} is exact for all profinite groups $G$, as it is part of the long exact sequence associated to
    \[0\to \Z/2\Z \to \on{Ind}_H^G(\Z/2\Z)\to \Z/2\Z\to 0.\]
\end{rmk}

    \section{Massey products in group cohomology}\label{massey-section}
\subsection{Massey products} Let $(A,\partial)$ be a differential graded ring, i.e, $A=\oplus_{i\geq 0}A^i$ is a non-negatively graded abelian group with an associative multiplication which respects the grading, and $\partial\colon A\to A$ is a group homomorphism of degree $1$ such that $\partial\circ \partial=0$ and $\partial(ab)=\partial(a)b+(-1)^ia\partial(b)$ for all $i\geq 0$, $a\in A^i$ and $b\in A$. We denote by $H^*(A)\coloneqq \on{Ker}\partial/\on{Im}\partial$ the cohomology of $(A,\partial)$, and we write $\cup$ for the multiplication (cup product) on $H^*(A)$.
	
	Let $n\geq 2$ be an integer and $a_1,\dots,a_n\in H^1(A)$. A \emph{defining system} for the $n$-th order Massey product $\ang{a_1,\dots,a_n}$ is a collection $M$ of elements of $a_{ij}\in A^1$, where $1\leq i<j\leq n+1$, $(i,j)\neq (1,n+1)$, such that
 \begin{enumerate}
     \item $\partial(a_{i,i+1})=0$ and $a_{i,i+1}$ represents $a_i$ in $H^1(A)$, and
     \item $\partial(a_{ij})=-\sum_{l=i+1}^{j-1}a_{il}a_{lj}$ for all $i<j-1$.
 \end{enumerate}
	
	It follows from (2) that  $-\sum_{l=2}^na_{1l}a_{l,n+1}$ is a $2$-cocycle: we write $\ang{a_1,\dots,a_n}_M$ for its cohomology class in $H^2(A)$, called the \emph{value} of $\ang{a_1,\dots,a_n}$ corresponding to $M$. By definition, the \emph{Massey product} of $a_1,\dots,a_n$ is the subset $\ang{a_1,\dots,a_n}$ of $H^2(A)$ consisting of the values $\ang{a_1,\dots,a_n}_M$ of all defining systems $M$. We say that the Massey product $\ang{a_1,\dots,a_n}$ is \emph{defined} if it is non-empty, and that it \emph{vanishes} if $0\in \ang{a_1,\dots,a_n}$.

    \begin{lemma}\label{trivial-case}
        Let $A$ be a differential graded ring and let $a_1,\dots,a_n\in H^1(A)$. Suppose that the Massey product $\ang{a_1,\dots,a_n}$ is defined and that $a_k=0$ for at least one $k\in\set{1,\dots,n}$. Then the Massey product $\ang{a_1,\dots,a_n}$ vanishes.
    \end{lemma}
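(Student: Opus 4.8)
The claim is essentially the observation that a Massey product containing a degenerate entry $a_k=0$ must vanish once it is known to be defined. The strategy is to take an arbitrary defining system $M=\{a_{ij}\}$ witnessing that $\ang{a_1,\dots,a_n}$ is defined, and to modify it into a new defining system $M'$ whose value is $0$. Since $a_k=0$ in $H^1(A)$, the cochain $a_{k,k+1}$ is a coboundary, say $a_{k,k+1}=\partial(b)$ for some $b\in A^0$. The idea is to use $b$ to "absorb" $a_{k,k+1}$: replace $a_{k,k+1}$ by $0$, and simultaneously adjust the cochains $a_{i,k}$ (for $i<k$) and $a_{k+1,j}$ (for $j>k+1$) — and more generally all $a_{ij}$ with $i\le k<k+1\le j$ — by adding suitable multiples of $b$ times the other entries, so that the defining-system relations (1) and (2) are preserved while forcing the final $2$-cocycle $-\sum_{l}a'_{1l}a'_{l,n+1}$ to become an explicit coboundary.

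**Key steps, in order.** First I would isolate the case $1<k<n$ from the boundary cases $k=1$ and $k=n$; the boundary cases are slightly simpler because then $a_{k,k+1}=a_{1,2}$ or $a_{n,n+1}$ sits at the end of the array and only one "side" needs to be adjusted. Second, fix $b\in A^0$ with $\partial b=a_{k,k+1}$. Third, define the new array $M'$ by the rule: keep $a_{ij}$ unchanged whenever the interval $\{i,\dots,j\}$ does not straddle the pair $(k,k+1)$ (i.e.\ whenever $j\le k$ or $i\ge k+1$); set $a'_{k,k+1}=0$; and for an index pair with $i\le k$ and $j\ge k+1$, set
\[
a'_{ij} \;=\; a_{ij} \;-\; a_{ik}\,b \;+\; (-1)^{?}\,b\,a_{k+1,j},
\]
where the exact signs and the treatment of the degenerate sub-cases ($i=k$ or $j=k+1$) are dictated by the Leibniz rule $\partial(a_{ik}b)=\partial(a_{ik})b - a_{ik}\partial(b)$ and $\partial(b\,a_{k+1,j})=\partial(b)a_{k+1,j}-b\,\partial(a_{k+1,j})$ (note $b$ has degree $0$ and the $a_{ij}$ have degree $1$). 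Fourth, verify that $M'$ still satisfies (1) and (2): relation (1) is immediate since only $a_{k,k+1}$ was changed among the diagonal entries and $0$ represents $a_k=0$; relation (2) is a direct computation using the two Leibniz identities above, where the correction terms are arranged precisely so that the $a_{k,k+1}=\partial b$ appearing in the expansion of $-\sum_l a'_{il}a'_{lj}$ cancels the $\partial$ of the correction terms. Fifth, with $M'$ in hand, evaluate $\ang{a_1,\dots,a_n}_{M'} = \left[-\sum_{l=2}^{n}a'_{1l}\,a'_{l,n+1}\right]$ and observe that, because $a'_{k,k+1}=0$ and the product telescopes through the $k$-th "node", this $2$-cocycle is the coboundary of the $1$-cochain $a'_{1k}\,b$ (up to a sign), hence $\ang{a_1,\dots,a_n}_{M'}=0$.

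**Alternative cleaner route.** Rather than the bare-hands cochain manipulation above, it may be cleaner to invoke the standard interpretation of Massey products via the bar construction / representations into the unipotent group $U_{n+1}$ of upper-triangular matrices: a defining system is exactly a "cocycle up to the corner" valued in $U_{n+1}$, and the value of the Massey product measures the obstruction to upgrading it to a genuine homomorphism. From this viewpoint, if $a_k=0$ one can conjugate the defining system by the elementary matrix $I + b\,E_{k,k+1}$ (using $\partial b=a_{k,k+1}$) to kill the $(k,k+1)$-entry, after which the resulting upper-triangular "cocycle" visibly factors through the parabolic obtained by deleting row $k$ / column $k+1$, and one reads off that the corner obstruction is trivial. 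I would likely present the elementary cochain version (as above) for self-containedness, since the paper's $\Cref{massey-section}$ sets up defining systems directly.

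**Main obstacle.** The only real difficulty is bookkeeping: getting the signs right in the definition of $a'_{ij}$ and in the final coboundary identity, and correctly handling the degenerate index ranges (when $i=k$, the term $a_{ik}=a_{k,k}$ is not part of the array, and similarly when $j=k+1$). There is no conceptual obstruction — the statement is a formal consequence of the Leibniz rule plus the fact that $a_k=0$ means $a_{k,k+1}$ is exact — so the write-up is a careful but routine verification. I expect the cleanest exposition treats the three cases $k=1$, $k=n$, $1<k<n$ with the middle case done in full and the outer two noted as simpler specializations.
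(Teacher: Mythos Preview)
Your proposal contains a genuine gap: the modification $M\rightsquigarrow M'$ that you describe is (as you note in your ``alternative cleaner route'') nothing other than conjugation of the $\cl{U}_{n+1}$-valued cochain by the constant element $I+bE_{k,k+1}$, and such a conjugation does not change the value of the defining system. Concretely, take $n=3$, $k=2$, so $a_{23}=\partial b$. Your recipe gives $a'_{12}=a_{12}$, $a'_{34}=a_{34}$, $a'_{23}=0$, $a'_{13}=a_{13}-a_{12}b$, and (with the sign forced by the Leibniz rule) $a'_{24}=a_{24}+ba_{34}$. Then
\[
-(a'_{12}a'_{24}+a'_{13}a'_{34})=-(a_{12}a_{24}+a_{12}ba_{34}+a_{13}a_{34}-a_{12}ba_{34})=-(a_{12}a_{24}+a_{13}a_{34}),
\]
which is exactly the value of the original system $M$, not a coboundary. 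So Step~5 fails: having $a'_{k,k+1}=0$ does \emph{not} force the value to be $\partial(a'_{1k}b)$ or any other coboundary. Likewise, in your matrix picture, after conjugation the entries $a'_{ij}$ with $i\le k<j$ and $(i,j)\ne(k,k+1)$ are still nonzero, so the map does not factor through any parabolic that would kill the corner obstruction.

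The paper's argument is both simpler and avoids this trap entirely. Given any defining system $(a_{ij})$, set $b_{ij}=a_{ij}$ when $i>k$ or $j\le k$, and $b_{ij}=0$ when $i\le k<j$. Since $a_k=0$, the choice $b_{k,k+1}=0$ is a legitimate representative; the relation $\partial b_{ij}=-\sum_l b_{il}b_{lj}$ is immediate in each of the three regions (both sides vanish for $i\le k<j$, and are unchanged from $(a_{ij})$ otherwise). Finally every summand $b_{1l}b_{l,n+1}$ vanishes, because $b_{1l}=0$ for $l>k$ and $b_{l,n+1}=0$ for $l\le k$. No element of $A^0$ is needed. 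Your conjugation step could be salvaged by appending this zeroing-out afterwards, but then the conjugation was unnecessary to begin with.
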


    \begin{proof}
        We follow the proof \cite[Lemma 6.2.4]{fenn1983techniques}. Let $(a_{ij})_{i,j}$ be a defining system for $\ang{a_1,\dots,a_n}$. Set $b_{ij}=a_{ij}$ if $i>k$ or $j<k$, and set $b_{ij}=0$ if $i\leq k\leq j$. Then $(b_{ij})_{i,j}$ is a defining system for $\ang{a_1,\dots,a_n}$. Moreover, for all $2\leq l\leq n$ we have $b_{1l}b_{l,n+1}=0$, and hence $\sum_{l=2}^nb_{1l}b_{l,n+1}=0$. Thus $\ang{a_1,\dots,a_n}$ vanishes.
    \end{proof}

	\subsection{Dwyer's Theorem}
 
	Let $G$ be a profinite group and let $p$ be a prime number. The complex $(C^*(G,\Z/p\Z),\partial)$ of mod $p$ non-homogeneous continuous cochains of $G$ with the standard cup product is a differential graded ring. Thus the cohomology ring $H^*(G,\Z/p\Z)=H^*(C^*(G,\Z/p\Z),\partial)$ is endowed with Massey products.
	  
 Let $U_{n+1}\subset \on{GL}_{n+1}(\F_p)$ be the subgroup of all upper-triangular unipotent matrices. For all $1\leq i<j\leq n+1$, we denote by $u_{i,j}\colon U_{n+1}\to \Z/p\Z$ the $(i,j)$-th coordinate function on $U_{n+1}$. The functions $u_{i,j}$ are group homomorphisms only when $j=i+1$. The center $Z_{n+1}$ of $U_{n+1}$ is the subgroup of all matrices such that $u_{i,j}=0$ when $(i,j)\neq (1,n+1)$. We fix an identification $Z_{n+1}\simeq \Z/p\Z$ by letting $1$ correspond to the matrix in $Z_{n+1}$ with $1$ on the top-right corner. We obtain a commutative diagram
	\begin{equation}\label{central-ses}
	\begin{tikzcd}
		1\arrow[r] & \Z/p\Z\arrow[r] &  U_{n+1}\arrow[r]  \arrow[dr]& \cl{U}_{n+1}\arrow[r] \arrow[d]  & 1\\
		&&& (\Z/p\Z)^n
	\end{tikzcd}	
	\end{equation}
	where the row is a short exact sequence and the homomorphism $U_{n+1}\to (\Z/p\Z)^n$ is given by $(u_{1,2},u_{2,3},\dots, u_{n,n+1})$. The group $\cl{U}_{n+1}$ may be identified with the group of all upper triangular unipotent matrices of size $(n+1)\times (n+1)$ with the entry $(1,n+1)$ omitted.  
	
 The following characterization of Massey products in group cohomology is due to Dwyer \cite{dwyer1975homology}.

 \begin{thm}\label{dwyer}
  Let $p$ be a prime number, let $G$ be a profinite group, let $\chi_1,\dots,\chi_n$ be in $H^1(G,\Z/p\Z)$, write $\chi= (\chi_1,\dots,\chi_n)\colon G\to (\Z/p\Z)^n$ for the induced homomorphism, and consider the mod $p$ Massey product $\ang{\chi_1,\dots,\chi_n}$.
  
  (i) If $\tilde{\chi} : G\to \cl{U}_{n+1}$ is a continuous homomorphism lifting $\chi$, then the collection of 1-cochains $M_{\tilde{\chi}}\coloneqq \set{u_{i,j}\circ \tilde{\chi}}$ forms a defining system for $\ang{\chi_1,\dots,\chi_n}$.
    
(ii) There is a bijective correspondence between continuous lifts  of $\chi$ and defining systems for $\ang{\chi_1,\dots,\chi_n}$, given by sending a lift $\tilde{\chi} : G\to \cl{U}_{n+1}$ to $M_{\tilde{\chi}}$.
     
(iii) Let $\widetilde{\chi}\colon G\to \cl{U}_{n+1}$ be a continuous lift of $\chi$. Then the value of $M_{\widetilde{\chi}}$ is equal to $-\widetilde{\chi}^*(u)\in H^2(G,\Z/p\Z)$, where $u\in H^2(\cl{U}_{n+1},\Z/p\Z)$ is the class of the short exact sequence of (\ref{central-ses})
  
(iv) In particular, $\ang{\chi_1,\dots,\chi_n}$ is defined (resp. vanishes) if and only if $\chi$ lifts to a continous homomorphism $G\to \cl{U}_{n+1}$ (resp. $G\to U_{n+1}$). 
	\end{thm}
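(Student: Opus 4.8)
The plan is to reduce everything to the combinatorics of matrix multiplication in $U_{n+1}$. First I would record the multiplication law: writing $u_{i,i}=1$ by convention, for $g,h\in U_{n+1}$ one has $u_{i,j}(gh)=\sum_{i\le l\le j}u_{i,l}(g)u_{l,j}(h)$, hence
\[
u_{i,j}(gh)=u_{i,j}(g)+u_{i,j}(h)+\sum_{i<l<j}u_{i,l}(g)\,u_{l,j}(h),
\]
and the same identity holds in $\cl U_{n+1}$ for every $(i,j)\neq(1,n+1)$, since deleting the $(1,n+1)$ coordinate does not affect those entries. The finiteness of $U_{n+1}$ and $\cl U_{n+1}$ means "continuous homomorphism" just imposes that kernels be open, so there are no topological subtleties beyond that.

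For (i) and (ii): given a continuous lift $\widetilde\chi\colon G\to\cl U_{n+1}$ of $\chi$, I would set $a_{ij}\coloneqq u_{i,j}\circ\widetilde\chi\in C^1(G,\Z/p\Z)$ for $(i,j)\neq(1,n+1)$. Since $u_{i,i+1}$ is a homomorphism, each $a_{i,i+1}$ is a cocycle, and it represents $\chi_i$ because $\widetilde\chi$ lifts $\chi=(\chi_1,\dots,\chi_n)$; this is condition (1). Combining the displayed multiplication law with the coboundary formula $(\partial f)(g,h)=f(g)+f(h)-f(gh)$ and the formula $(f\cup f')(g,h)=f(g)f'(h)$ for the cup product of $1$-cochains of \cite[p. 221]{cartan1956homological}, one obtains $\partial a_{ij}=-\sum_{l=i+1}^{j-1}a_{il}\cup a_{lj}$, which is condition (2); hence $M_{\widetilde\chi}$ is a defining system, proving (i). Conversely, the two conditions defining a defining system $(a_{ij})_{(i,j)\neq(1,n+1)}$ say exactly that $g\mapsto\bigl(u_{i,j}\mapsto a_{ij}(g)\bigr)$ is a continuous homomorphism $G\to\cl U_{n+1}$ lifting $\chi$, and this assignment is inverse to $\widetilde\chi\mapsto M_{\widetilde\chi}$; this is (ii).

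For (iii): I would compute the class $u$ of the extension (\ref{central-ses}) using the set-theoretic section $s\colon\cl U_{n+1}\to U_{n+1}$ that inserts $0$ in the $(1,n+1)$ entry. The associated $2$-cocycle $c(\cl g,\cl h)=s(\cl g)\,s(\cl h)\,s(\cl g\cl h)^{-1}$ lies in $Z_{n+1}=\Z/p\Z$, and since the $(1,n+1)$ entries of $s(\cl g),s(\cl h),s(\cl g\cl h)$ all vanish, its value is $\sum_{l=2}^{n}u_{1,l}(\cl g)\,u_{l,n+1}(\cl h)$. Pulling back along $\widetilde\chi$, the class $\widetilde\chi^*(u)$ is represented by $(g,h)\mapsto\sum_{l=2}^{n}a_{1l}(g)\,a_{l,n+1}(h)$, i.e. by the cocycle $\sum_{l=2}^n a_{1l}\cup a_{l,n+1}$, which is the negative of the cocycle whose class is, by definition, the value $\ang{\chi_1,\dots,\chi_n}_{M_{\widetilde\chi}}$; hence that value equals $-\widetilde\chi^*(u)$. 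Finally (iv) follows formally: by (ii), $\ang{\chi_1,\dots,\chi_n}$ is defined exactly when $\chi$ lifts to $\cl U_{n+1}$; by (iii), the value $\ang{\chi_1,\dots,\chi_n}_{M_{\widetilde\chi}}$ vanishes exactly when $\widetilde\chi^*(u)=0$, and, by the standard description of the class of a central extension as the obstruction to lifting a homomorphism, this holds exactly when $\widetilde\chi\colon G\to\cl U_{n+1}$ lifts further to $U_{n+1}$, equivalently when $\chi$ lifts to $U_{n+1}$.

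I expect the only delicate points to be the sign bookkeeping — arranging that the minus signs in the definition of a defining system and in part (iii) are consistent with the chosen coboundary and cup-product conventions — and, in part (iv), the input that $\widetilde\chi^*(u)=0$ if and only if $\widetilde\chi$ lifts to $U_{n+1}$; the latter is the single non-formal ingredient, and it is itself a standard fact about central group extensions. Everything else is a direct unwinding of definitions.
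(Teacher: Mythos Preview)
Your argument is correct and self-contained. The paper, however, does not prove this theorem at all: its proof consists solely of citations to Dwyer's original paper \cite{dwyer1975homology} and to \cite[Proposition 8.3]{efrat2014zassenhaus} and \cite[Proposition 2.2]{harpaz2019massey} for the version with the sign conventions used here. So what you have written is not a comparison of approaches but rather a direct proof of a result the paper treats as a black box. Your unwinding of the matrix-multiplication identity in $U_{n+1}$ to obtain the defining-system conditions, and your explicit computation of the extension cocycle via the zero-section, are exactly the standard ingredients in the cited references; the advantage of spelling them out is that the sign consistency between the paper's conventions (the minus sign in condition (2) of a defining system and the minus sign in (iii)) becomes transparent, whereas the paper simply asserts that those references use compatible conventions.
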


 It follows from (iii) that $\widetilde{\chi}$ lifts to $U_{n+1}$ lifts to $U_{n+1}$ if and only if the class
 \[h^*(u)=\left[\sum_{l=2}^{n}\widetilde{\chi}_{1l}\widetilde{\chi}_{l,n+1}\right]\in H^2(G,\Z/p\Z),\]
 vanishes, where $\widetilde{\chi}_{ij}\coloneqq u_{ij}\circ \widetilde{\chi}$. We will refer to $h^*(u)$ as the \emph{obstruction} to lifting $\widetilde{\chi}$ to $U_{n+1}$.
	
	\begin{proof}
	This result is originally due to Dwyer \cite{dwyer1975homology}, with different sign conventions. For the version of Dwyer's Theorem with our sign conventions, see \cite[Proposition 8.3]{efrat2014zassenhaus} or  \cite[Proposition 2.2]{harpaz2019massey}.
	\end{proof}

Let $C_p$ be a cyclic group of order $p$, and fix a generator $\sigma\in C_p$. For every $C_p$-module $A$, we define a $C_p$-module endomorphism
\[\widetilde{N}_{C_p}\colon A\to A,\qquad \widetilde{N}_{C_p}(a)=\sum_{l=0}^{p-2}(p-1-l)\sigma^la\qquad \text{for all $a\in A$.}\]
Now let $G$ be a profinite group, let $\chi\in H^1(G,\Z/p\Z)$ be a non-zero character, let $H$ be the kernel of $\chi$, and let $g\in G$ be such that $\chi(g)=1$. For every $G$-module $B$ on which $H$ acts trivially, we define a $G$-module endomorphism $\widetilde{N}_{\chi}\colon B\to B$ as $\widetilde{N}_{C_p}$, where we view $B$ as a $G/H$-module and identify $G/H$ with $C_p$ so that $gH$ maps to $\sigma$. Explicitly:
\[\widetilde{N}_{\chi}\colon B\to B,\qquad \widetilde{N}_{\chi}(b)=\sum_{l=0}^{p-2}(p-1-l)g^lb\qquad \text{for all $b\in B$.}\]
A simple calculation shows that
\begin{equation}\label{g-1}(g-1)\widetilde{N}_{\chi}(b)=N_{G/H}(b)-pb\qquad\text{for all $b\in B$.}\end{equation}
For all $i\geq 0$, we have the induced maps $\widetilde{N}_{\chi}\colon H^i(H,B)\to H^i(H,B)$.

\begin{lemma}\label{p-u3}
   Let $p$ be a prime, let $G$ be a profinite group, and let $\chi_1,\chi_2\in H^1(G,\Z/p\Z)$. Suppose that $\chi_1\neq 0$, and set $H\coloneqq \on{Ker}(\chi_1)$.
   
   (1) Let $\psi\in H^1(H,\Z/p\Z)$ be such that $\on{Cor}^G_{H}(\psi)=\chi_2$, and define \[\eta\coloneqq \widetilde{N}_{\chi_1}(\psi)\in H^1(H,\Z/p\Z).\] Then there exists a continuous homomorphism $G\to U_3$ given by a matrix 
\begin{equation}\label{a-b-rho}
	\begin{bmatrix}
		1 & \chi_1 & \rho\\
		0 & 1 & \chi_2 \\
		0 & 0 & 1
	\end{bmatrix}
\end{equation}
such that $\rho|_{H}=\eta$.

(2) Suppose that $p=2$. For all $\psi\in H^1(H,\Z/2\Z)$ such that $\on{Cor}^G_{H}(\psi)=\chi_2$, there exists a continuous homomorphism $G\to U_3$ as in (\ref{a-b-rho}) such that $\rho|_{H}=\psi$. Conversely, given a continuous homomorphism $G\to U_3$ as in (\ref{a-b-rho}) and setting $\psi\coloneqq \rho|_{H}$, we have $\on{Cor}^G_{H}(\psi)=\chi_2$. 
\end{lemma}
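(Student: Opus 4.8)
The plan is to reduce the existence of a continuous homomorphism $G\to U_3$ of the shape (\ref{a-b-rho}) to a cocycle-theoretic condition, and then identify that condition with the hypothesis on $\psi$. First I would record what it means for a matrix-valued map $\begin{bmatrix}1&\chi_1&\rho\\0&1&\chi_2\\0&0&1\end{bmatrix}$ to be a homomorphism: the diagonal entries being $1$ and $\chi_1,\chi_2$ being homomorphisms is automatic, while the $(1,3)$-entry must satisfy the nonhomogeneous $1$-cocycle-with-a-twist identity $\rho(gg')=\rho(g)+\rho(g')+\chi_1(g)\chi_2(g')$; equivalently, $\partial\rho=-\chi_1\cup\chi_2$ as $\Z/p\Z$-valued cochains on $G$. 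Such a $\rho$ exists if and only if $\chi_1\cup\chi_2=0$ in $H^2(G,\Z/p\Z)$, in which case the set of valid $\rho$ is a torsor under $Z^1(G,\Z/p\Z)$; and $G\to U_3$ restricts on $H=\ker\chi_1$ to the upper-triangular map with $(1,3)$-entry $\rho|_H$, which is now a genuine homomorphism $H\to\Z/p\Z$ since $\chi_1|_H=0$. So the content of the lemma is: which classes in $H^1(H,\Z/p\Z)$ arise as $\rho|_H$ for such a $\rho$, and the answer should be exactly the $H^1(G,\Z/p\Z)$-translates (via restriction) of $\widetilde N_{\chi_1}(\psi)$ for any $\psi$ with $\on{Cor}^G_H(\psi)=\chi_2$.

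For part (1), I would first invoke \Cref{cyclicext} (or rather the plain exactness statement, which for general $G$ follows from Shapiro's lemma applied to $0\to\Z/p\Z\to\on{Ind}^G_H\Z/p\Z\to\Z/p\Z(1)\to\cdots$ — but since we only need one direction, exactness at $H^2(G,\Z/p\Z)$ via $\on{Cor}^G_H(\psi)=\chi_2$ gives $\chi_1\cup\on{Cor}^G_H(\psi)=\chi_1\cup\chi_2=0$ using the projection formula $\on{Cor}(\on{Res}(\chi_1)\cup\psi)=\chi_1\cup\on{Cor}(\psi)$ together with $\on{Res}^G_H(\chi_1)=0$). Thus $\chi_1\cup\chi_2=0$ and a matrix homomorphism (\ref{a-b-rho}) exists for some $\rho$. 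The real work is to pin down $\rho|_H$ and adjust it to land on $\eta=\widetilde N_{\chi_1}(\psi)$. The mechanism is the identity (\ref{g-1}): writing $N_{G/H}$ in the form $N_{G/H}=p+(g-1)\widetilde N_{\chi_1}$ on $H$-trivial modules, and noting that the corestriction $H^1(H,\Z/p\Z)\to H^1(G,\Z/p\Z)$ composed appropriately with restriction is computed by the norm element, one sees that the restriction $\rho|_H$ of the $(1,3)$-entry of a lift is — up to a globally defined $1$-cocycle of $G$ — governed by $\widetilde N_{\chi_1}$ applied to a choice of $\psi$ lifting $\chi_2$ "along the $H$-coset sum". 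Concretely, I would fix a section of $G\to G/H\cong C_p$ using $g$, write down the standard crossed-homomorphism formula for $\rho$ in terms of $\chi_2$ and $g$, restrict to $H$, and compute that the result is $\sum_{l=0}^{p-2}(p-1-l)\,g^l\!\cdot(\text{a }\Z/p\Z\text{-valued function pinning }\chi_2)$, i.e. precisely $\widetilde N_{\chi_1}$ of a cocycle representing $\psi$; then modify $\rho$ by an element of $Z^1(G,\Z/p\Z)$ to replace $\psi$ by the prescribed one. The main obstacle is exactly this bookkeeping: matching the explicit inhomogeneous cocycle for the $(1,3)$-entry of a $U_3$-lift with the operator $\widetilde N_{\chi_1}$, getting the coefficients $(p-1-l)$ and the sign right, and checking that the ambiguity in the lift (a $Z^1(G,\Z/p\Z)$-torsor) surjects onto the ambiguity in $\psi$ under $\on{Res}^G_H$ so that every target $\eta=\widetilde N_{\chi_1}(\psi)$ is hit.

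For part (2), with $p=2$ the operator simplifies drastically: $\widetilde N_{\chi_1}(b)=\sum_{l=0}^{0}(1-l)g^l b=b$, so $\widetilde N_{\chi_1}$ is the identity and $\eta=\psi$; hence the forward direction is immediate from part (1). For the converse, given a homomorphism (\ref{a-b-rho}) over $\F_2$, I would set $\psi\coloneqq\rho|_H\in H^1(H,\Z/2\Z)$ and compute $\on{Cor}^G_H(\psi)$ directly. Here the cleanest route is to use that, since $\chi_1\cup\chi_2=0$ (which holds because the $U_3$-lift exists), the exact sequence of \Cref{cyclicext} — valid unconditionally for $p=2$ by \Cref{always-exact} — and the description of $\rho$ as the $(1,3)$-entry together show $\chi_2=\chi_1\cup(\text{something})$ is replaced by the statement that $\on{Cor}^G_H(\rho|_H)$ is computed by the coboundary-type identity $\partial\rho=\chi_1\cup\chi_2$; evaluating the corestriction as a sum over the two cosets $\{1,g\}$ of $G/H$ and using $\rho(g\cdot)-\rho(\cdot)=\chi_1(\cdot)\chi_2(g)+(\text{terms vanishing on }H)$ yields $\on{Cor}^G_H(\psi)=\chi_2$. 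I expect this converse computation to be short once the explicit formula for $\on{Cor}$ in terms of a coset section is in hand; the only care needed is that corestriction in degree $1$ is the transfer, which for $G/H$ of order $2$ sends a cocycle $\psi$ on $H$ to $g\mapsto \psi$-values summed over the section, and matches the $(1,3)$-entry obstruction exactly.
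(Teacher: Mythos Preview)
Your approach is correct and takes a genuinely different route from the paper's. The paper argues structurally: it decomposes $U_3=V\rtimes C$, twists $V$ by $\chi_1$ to obtain a $G$-module $V_1$, and constructs an explicit embedding $j\colon V_1\hookrightarrow\on{Ind}_H^G(\Z/p\Z)$ fitting into a commutative diagram with $\widetilde N_{\chi_1}$, the norm map $\nu$, and the Faddeev--Shapiro isomorphism $H^1(G,\on{Ind}_H^G(\Z/p\Z))\cong H^1(H,\Z/p\Z)$; given $\psi$, one pulls back along Shapiro, pushes into $H^1(G,V_1)$, untwists to a homomorphism $G\to U_3$, and the diagram forces $\rho|_H=\eta$ without ever writing $\rho$ explicitly. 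Your route is the direct cocycle computation: parametrize $\rho$ via the section $g^ih\mapsto f(i)+\lambda_0(h)+i\chi_2(h)$ and solve for $\lambda_0=\rho|_H$. This is more elementary (no twisted cohomology, no Shapiro), but the ``bookkeeping'' you flag is real: one must check both the conjugation constraint $(g-1)\lambda_0=\pm\chi_2|_H$ and a separate constraint at $g^p$, and then verify that $\widetilde N_{\chi_1}(\psi)$ satisfies both (the $g^p$ check uses $\sum_{l=0}^{p-2}(p-1-l)=\binom{p}{2}\equiv 0\pmod p$ for odd $p$, and the transfer identity $\on{Cor}(\psi)(g)=\psi(g^2)$ for $p=2$). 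The paper's machinery absorbs these verifications into diagram commutativity; your approach trades that machinery for explicit calculation. Your converse argument for $p=2$ via the explicit two-coset transfer is in fact shorter than the paper's, which routes back through the same induced-module diagram.
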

	\begin{proof}
    (1) We have $U_3=V\rtimes C$, where
    \[
        V=\begin{bmatrix}
			1 & 0 & *  \\
			& 1 & *  \\
			&    & 1
		\end{bmatrix},\qquad 
		 C=\begin{bmatrix}
			1 & * & 0  \\
			& 1 & 0  \\
			&    & 1
		\end{bmatrix}.
  \]
     We let $C$ act on $V$ by conjugation. We identify $C$ and $\Z/p\Z$ via the coordinate map $u_{1,2}$, and we write $V_1$ for the twist of $V$ by $\chi_1\colon G\to C=\Z/p\Z$. 
    
    Let $g\in G$ be such that $\chi_1(g)=1$. We define $j\colon V_1\to \on{Ind}^G_H(\Z/p\Z)$ by sending
    \[\begin{bmatrix}
        1 & 0 & 1 \\
        0 & 1 & 0 \\
        0 & 0 & 1
    \end{bmatrix},\qquad 
    \begin{bmatrix}
        1 & 0 & 0 \\
        0 & 1 & 1 \\
        0 & 0 & 1
    \end{bmatrix}\]
    to the constant function $1$ and the function mapping $g^iH$ to $i-1$ for $i=0,\dots,p-1$, respectively. We have the following commutative diagram:
\begin{equation}\label{alpha-theta-diagram}
\begin{tikzcd}
    \on{Ind}_{H}^G(\Z/p\Z) \arrow[r,->>,"\mu"] \arrow[rr,bend left =15,"\widetilde{N}_{\chi_1}"]  \arrow[d,equal] & V_1 \arrow[d, "\pi"] \arrow[r,hook,"j"] & \on{Ind}_{H}^G(\Z/p\Z) \arrow[d,"g-1"] \\  
     \on{Ind}_{H}^G(\Z/p\Z) \arrow[r,->>,"\nu"]\arrow[rr,bend right =15,"N_{G/H}",swap] & \Z/p\Z\arrow[r,hook,"i"]  & \on{Ind}_{H}^G(\Z/p\Z),
\end{tikzcd}
\end{equation}
where $\mu$ is defined by the commutativity of the top triangle, and $i$ and $\nu$ are the maps of (\ref{ind-inclusion}) and (\ref{ind-norm}) for $A=\Z/p\Z$. Indeed, the bottom triangle commutes by a direct calculation, the outer rectangle commutes by (\ref{g-1}), the right square commutes by a direct calculation, and by the injectivity of the right horizontal maps this implies the commutativity of the left square.

    Let $q\colon \on{Ind}_H^G(\Z/p\Z)\to \Z/p\Z$ be given by evaluation at the identity of $G$: it is a homomorphism of trivial $H$-modules. The Faddeev--Shapiro isomorphism
    \[\varphi\colon H^1(G,\on{Ind}_H^G(\Z/p\Z))\xrightarrow{\on{Res}} H^1(H,\on{Ind}_H^G(\Z/p\Z))\xrightarrow{q_*} H^1(H,\Z/p\Z)\]
    fits into a commutative square
    \begin{equation}\label{norm-shapiro-cor}
    \begin{tikzcd}
        H^1(G,\on{Ind}_H^G(\Z/p\Z)) \arrow[r,"\varphi"]\arrow[d,"\nu_*"]  & H^1(H,\Z/p\Z)\arrow[d,"\on{Cor}^G_H"] \\
        H^1(G,\Z/p\Z) \arrow[r,equal] & H^1(G,\Z/p\Z);
    \end{tikzcd}
    \end{equation}
    see \cite[Proposition (1.6.5)]{neukirch2008cohomology}.
    Let $\psi_0\coloneqq \varphi^{-1}(\psi)$ and $\eta_0\coloneqq \varphi^{-1}(\eta)$, so that $\widetilde{N}_{\chi_1}(\psi_0)=\eta_0$. 
    The homomorphism $H^1(G,\on{Ind}_{H}^G(\Z/p\Z))\to H^1(G,V_1)$ coming from (\ref{alpha-theta-diagram})
    sends $\psi_0$ to a class
    \[h_0\coloneqq \begin{bmatrix}
      1 & 0 &  \rho \\
      0 & 1 &  \chi \\
      0 & 0 & 1
    \end{bmatrix}\in H^1(G,V_1).
    \]
    The commutativity of the right square of (\ref{alpha-theta-diagram}) implies that $\chi=\pi_*(h_0)=\nu_*(\eta_0)$, and by (\ref{norm-shapiro-cor}) we have $\nu_*(\psi_0)=\on{Cor}^G_H(\psi)=\chi_2$. Therefore $\chi=\chi_2$. 
We have a commutative square of $H$-modules with trivial action
    \begin{equation}\label{used-for-shapiro}
    \begin{tikzcd}
    V_1 \arrow[d,"u_{1,3}"] \arrow[r, hook,"j"] & \on{Ind}_{H}^G(\Z/p\Z) \arrow[d,"q"]  \\
    \Z/p\Z \arrow[r, equal] & \Z/p\Z, 
    \end{tikzcd}
    \end{equation}
    where the top horizontal map comes from (\ref{alpha-theta-diagram}). We obtain the commutative square
    \begin{equation}\label{double-vertical}
    \begin{tikzcd}
        H^1(G,V_1) \arrow[r,"j_*"]\arrow[d,"\on{Res}"]  & H^1(G,\on{Ind}_{H}^G(\Z/p\Z)) \arrow[d,"\on{Res}"] \arrow[dd, bend left=75,"\varphi"] \\
        H^1(H,V_1) \arrow[r,"j_*"] \arrow[d,"(u_{1,3})_*"] & H^1(H,\on{Ind}_{H}^G(\Z/p\Z)) \arrow[d,"q_*"] \\
        H^1(H,\Z/p\Z) \arrow[r,equal] & H^1(H,\Z/p\Z),
    \end{tikzcd}
    \end{equation}
    where the bottom square is induced by (\ref{used-for-shapiro}). We know from (\ref{alpha-theta-diagram}) that the map $j_*\colon H^1(G,V_1)\to H^1(G,\on{Ind}_{H}^G(\Z/p\Z))$ sends $h_0$  
    to $\widetilde{N}_{\chi_1}(\psi_0)=\eta_0$. On the other hand, $\varphi(\eta_0)=\eta$ and the composition of the left vertical maps sends $h_0$ to $\rho|_{H}$. Thus $\rho|_{H}=\eta$.

    By \cite[Proposition (28.8)]{knus1998book}, the twisting map $Z^1(G,V_1)\to Z^1(G,U_3)$ sends $h_0$ to
    \[ 
    \begin{bmatrix}
			1 & 0 & \rho\\
			0 & 1 & \chi_2 \\
			0 & 0 & 1
		\end{bmatrix}\cdot
  \begin{bmatrix}
			1 & \chi_1 & 0\\
			0 & 1 & 0 \\
			0 & 0 & 1
		\end{bmatrix}
    =
    \begin{bmatrix}
			1 & \chi_1 & \rho\\
			0 & 1 & \chi_2 \\
			0 & 0 & 1
		\end{bmatrix}.
  \]
    This matrix defines a continuous homomorphism $G\to U_3$ such that $\rho|_{H}=\eta$.  
    
    (2) Since $\widetilde{N}_{\chi_1}(\psi)=\psi$ when $p=2$, the first part of (2) follows from an application of (1) to $\eta=\psi$. We now prove the second part of (2). Let $g\in G-H$. The map $g-1\colon \on{Ind}^G_H(\Z/2\Z)\to\on{Ind}^G_H(\Z/2\Z)$ factors through $\nu\colon \on{Ind}^G_H(\Z/2\Z)\to \Z/2\Z$. Therefore the right square of (\ref{alpha-theta-diagram}) yields a commutative square
    \[
    \begin{tikzcd}
    V_1 \arrow[r,"j"]\arrow[d,"\pi"]  & \on{Ind}_{H}^G(\Z/2\Z)\arrow[d,"\nu"] \\
    \Z/2\Z \arrow[r,equal] & \Z/2\Z.  
    \end{tikzcd}
    \]
    We obtain a commutative diagram
    \[
    \begin{tikzcd}
    H^1(G,V_1) \arrow[r,"j_*"] \arrow[d,"\pi_*"]  & H^1(G,\on{Ind}_{H}^G(\Z/2\Z)) \arrow[r,"\varphi", "\sim"']  \arrow[d,"\nu_*"] & H^1(H,\Z/2\Z) \arrow[d,"\on{Cor}^G_{H}"]\\
        H^1(G,\Z/2\Z) \arrow[r,equal] & H^1(G,\Z/2\Z) \arrow[r,equal] & H^1(G,\Z/2\Z),
    \end{tikzcd}
    \]
    where the right square is (\ref{norm-shapiro-cor}). Consider the class
    \[
    h\coloneqq \begin{bmatrix}
        1 & 0 & \rho \\
        0 & 1 & \chi_2 \\
        0 & 0 & 1
    \end{bmatrix}\in H^1(G,V_1).
    \]
    We deduce that $\chi_2=\pi_*(h)=\on{Cor}^G_H(\varphi(j_*(h)))$. By (\ref{double-vertical}), we have $\varphi(j_*(h))=\psi$, and hence $\chi_2=\on{Cor}^G_{H}(\psi)$, as desired.
    \end{proof}

	\section{Proof of Theorem \ref{main-h90}}\label{section6}

	\begin{proof}[Proof of \Cref{main-h90}]
It is clear that (3) implies (2), and it follows from the definition of triple Massey products that (1) and (2) are equivalent. It thus suffices to show that (2) implies (3).

Define $G'\coloneqq \theta^{-1}(1+p\Z_p)$. Then  $G'$ is an open subgroup of $G$, the index $[G:G']$ divides $p-1$ and $(G',\theta|_{G'})$ is cyclotomic. Suppose that (2) implies (3) for all triple Massey products in $H^2(G',\Z/p\Z)$. Then, as $p$ does not divide $[G:G']$, by \cite[Proposition 4.14]{minac2016triple} we deduce that (2) implies for (3) for all triple Massey products in $H^2(G,\Z/p\Z)$. Replacing $(G,\theta)$ by $(G',\theta|_{G'})$, we may thus assume that $(G,\theta)$ is cyclotomic. Moreover, by \Cref{trivial-case} we may suppose that $\chi_1\neq 0$ and $\chi_3\neq 0$.

By \Cref{hilbert-90-equiv}, the pair $(G,\theta)$ admits a Hilbert 90 module $M$. Fix an inclusion $\iota\colon \Z/p\Z\hookrightarrow M$, and let $a,b,c\in M^G$ such that $\chi_1=\chi_a$, $\chi_2=\chi_b$ and $\chi_3=\chi_c$; see below (\ref{connecting}) for the notation. Since $\chi_a\cup\chi_b=0$, by \Cref{cup-norm} there exists $\alpha\in M^{G_a}$ such that $b=N_{G/G_a}(\alpha)$. 

By \Cref{p-u3}(1), letting $\eta\coloneqq \widetilde{N}_{\chi_a}(\alpha)\in M^{G_a}$, there exists a continuous homomorphism $G\to U_3$ given by a matrix
    \[
    \begin{bmatrix}
    1 & \chi_a & \rho \\
    0 & 1 & \chi_b \\
    0 & 0 & 1
    \end{bmatrix}
    \]
    such that $\rho|_{G_a}=\chi_\eta$. Similarly, since $\chi_b\cup\chi_c=0$ there exists $\gamma\in M^{G_c}$ such that $N_{G/G_c}(\gamma)=b$. Letting $\mu\coloneqq \widetilde{N}_{\chi_a}(\gamma)\in M^{G_a}$, there exists a continuous homomorphism $G\to U_3$ given by a matrix
    \[
    \begin{bmatrix}
    1 & \chi_b & \pi \\
    0 & 1 & \chi_c \\
    0 & 0 & 1
    \end{bmatrix}
    \] 
    such that $\pi|_{G_c}=\chi_\mu$. The matrix
	\[
	\begin{bmatrix}
		1 & \chi_a  & \rho & \square \\
		0 & 1 & \chi_b & \pi \\
		0 & 0 & 1 & \chi_c \\
		0 & 0 & 0 & 1
	\end{bmatrix}
	\]
	defines a continuous homomorphism $G\to \cl{U}_4$. The obstruction to lifting this homomorphism to $U_4$ is given by
	\[
	\varepsilon:=\chi_a\cup \pi + \rho \cup \chi_c\in H^2(G,\Z/p\Z).
	\]
    Suppose first that $\chi_a$ and $\chi_c$ are linearly dependent in $H^1(G,\Z/p\Z)$. In this case $\on{Res}^G_{G_a}(\varepsilon)=0$, and so by \Cref{cyclicext-module} we have $\varepsilon=\chi_a\cup\chi_u$ for some $u\in M$. Consider the continuous homomorphism $G\to \cl{U}_4$ defined by the matrix
    \[
	\begin{bmatrix}
		1 & \chi_a  & \rho & \square \\
		0 & 1 & \chi_b & \pi-\chi_u \\
		0 & 0 & 1 & \chi_c \\
		0 & 0 & 0 & 1
	\end{bmatrix}.
	\]
    The obstruction to lifting this homomorphism to $U_4$ is given by
    \[\chi_a\cup(\pi-\chi_u)+\rho\cup\chi_c=\varepsilon-\chi_a\cup\chi_u=0,\]
   and hence this homomorphism lifts to $U_4$. The conclusion follows from \Cref{dwyer}.
    
	Suppose now that $\chi_a$ and $\chi_c$ are linearly independent in $H^1(G,\Z/p\Z)$. Then $G_a\neq G_c$, and hence
 \[N_{{G_{a+c}}/{G_a\cap G_c}}(\alpha-\gamma)=N_{G/G_a}(\alpha)-N_{G/G_c}(\gamma)=b-b=0.\] 
Let $g\in G_{a+c}$ such that $\chi_a(g)=1$. By \Cref{subgroup}, $M$ is a Hilbert 90 module for $(G_a,\theta|_{G_a})$, and hence by \Cref{omega-lemma} we have an exact sequence
    \[M^{G_a\cap G_c}\xrightarrow{g-1} M^{G_a\cap G_c}\xrightarrow{N_{G/G_c}} M^{G_a}.\]
Therefore there exists $\omega\in M^{G_a\cap G_c}$ such that $(g-1)\omega=\alpha-\gamma$. Set
\[
	e\coloneqq\eta+N_{G/G_c}(\omega)\in M^{G_a}.
	\]
Then
\begin{align*}
    (g-1)e&=(g-1)\eta + (g-1)(N_{G/G_c}(\omega)) \\
    &=(N_{G/G_a}-p)(\alpha) + N_{G/G_c}((g-1)\omega) \\
    &=b-p\alpha+N_{G/G_c}(\alpha-\gamma)\\
    &=b-p\alpha+p\alpha-b\\
    &=0,
\end{align*}
which proves that $e\in M^G$. We have
	\[
	\on{Res}^G_{G_a}(\varepsilon)=\rho|_{G_a} \cup \chi_c=\chi_\eta \cup \chi_c =\chi_e \cup \chi_c\qquad\text{in $H^2(G_a,\Z/p\Z)$,}
	\]
	that is, $\on{Res}^G_{G_a}(\varepsilon-\chi_e \cup \chi_c)=0$ in $H^2(G_a,\Z/p\Z)$.
	By Proposition \ref{cyclicext} applied to $\chi=\chi_a$, there exists $f\in M^G$ such that $\varepsilon-\chi_e \cup \chi_c=\chi_a \cup \chi_f$ in $H^2(G,\Z/p\Z)$, and hence
	\[
	\varepsilon=\chi_a \cup \chi_f+\chi_e \cup \chi_c\qquad\text{in $H^2(G,\Z/p\Z).$}
	\]
	Consider the continuous homomorphism $G\to \cl{U}_4$ given by the matrix
    \[
	\begin{bmatrix}
		1 & \chi_a  & \rho-\chi_e & \square \\
		0 & 1 & \chi_b & \pi-\chi_f \\
		0 & 0 & 1 & \chi_c \\
		0 & 0 & 0 & 1
	\end{bmatrix}.
	\]
 The obstruction to lifting this homomorphism to $U_4$ is given by
 \[\chi_a\cup(\pi-\chi_f)+(\rho-\chi_e)\cup\chi_c=\varepsilon-\chi_a \cup \chi_f-\chi_e \cup \chi_c=0,\]
 and hence this homomorphism lifts to $U_4$. By \Cref{dwyer}, the Massey product $\ang{\chi_1,\chi_2,\chi_3}$ vanishes, as desired.
	\end{proof}

\begin{rmk}\label{comments1}
    When $p>2$, it is proved in \cite[Proposition 6.1.4, Theorem 6.2.1]{lam2023generalized} that if the sequence of \Cref{cyclicext} is exact at $H^1(G,\Z/p\Z)$ for a profinite group $G$, then the conclusion of \Cref{main-h90} holds for $G$. On the other hand, when $p=2$, the exactness of the sequence of \Cref{cyclicext} is satisfied by all profinite groups $G$ (see \Cref{always-exact}), and so it cannot imply the conclusion of \Cref{main-h90}. 
    
    We emphasize that, even when $p$ is odd, \Cref{main-h90} does not follow from \cite[Theorem 6.2.1]{lam2023generalized}. Indeed, the proof of \Cref{main-h90} relies on the construction of a Hilbert 90 module given by \Cref{hilbert-90-equiv}. Moreover, our proof of Theorems \ref{main-h90} and \ref{hilbert-90-equiv} is uniform in $p$ and applies to $p=2$ as well.
\end{rmk}

\begin{rmk}\label{comments2}
    \Cref{main-h90} proves the first half of a conjecture of Quadrelli \cite[Conjecture 1.5]{quadrelli2021chasing}, which predicts that \Cref{main-h90} might be true under the further assumption that $\on{Im}(\theta)\subset 1+4\Z_2$ when $p=2$.    The second half of \cite[Conjecture 1.5]{quadrelli2021chasing} predicts that, under the same assumptions, a fourfold Massey product $\ang{\chi_1,\chi_2,\chi_3,\chi_4}\subset H^2(G,\Z/p\Z)$ vanishes as soon as $\chi_1\cup\chi_2=\chi_2\cup\chi_3=\chi_3\cup\chi_4=0$. This is false in general; see \cite[Theorem 1.6]{merkurjev2022degenerate}.
\end{rmk}

\begin{rmk}
    Consider the group $G$ given by the following presentation:
    \[G\coloneqq \ang{x_1,x_2,x_3,x_4,x_5: [[x_1,x_2],x_3][x_4,x_5]=1}.\]
    Let $p$ be a prime number, and write $\hat{G}$ for the pro-$p$ completion of $G$. By \cite[Example 7.2]{minac2017triple}, there is a triple Massey product $\ang{\chi_1,\chi_2,\chi_3}\subset H^2(\hat{G},\Z/p\Z)$ which is defined but non-vanishing. By \Cref{main-h90}, we conclude that $(\hat{G},\theta)$ does not satisfy formal Hilbert 90 for any orientation $\theta$. When $p$ is odd, this fact already been proved by Quadrelli \cite[Theorem 1.5]{quadrelli2021chasing}.
\end{rmk}

\section{Proof of Theorem \ref{main-h90-degenerate}}\label{section7}

\begin{prop}\label{u5-criterion}
	Let $(G,\theta)$ be a $p$-oriented profinite group satisfying formal Hilbert $90$, and let $\chi_1,\chi_2,\chi_3\in H^1(G,\Z/2\Z)$. Suppose that $\chi_1\neq 0$ and let $H\coloneqq \on{Ker}(\chi_1)$.
 
	(1) The Massey product $\langle \chi_1,\chi_2,\chi_3,\chi_1\rangle$ vanishes if and only if there exist $\varphi$ and $\psi$ in $H^1(H,\Z/2\Z)$ such that $\on{Cor}^G_{H}(\varphi)=\chi_2$, $\on{Cor}^G_{H}(\psi)=\chi_3$, $\varphi\cup \psi=0$ and $\varphi\cup \on{Res}^G_{H}(\chi_3)=0$.
	
 (2) The Massey product $\langle \chi_1,\chi_2,\chi_3,\chi_1\rangle$ is defined if and only if there exist $\varphi$ and $\psi$ in $H^1(H,\Z/2\Z)$ such that $\on{Cor}^G_{H}(\varphi)=\chi_2$, $\on{Cor}^G_{H}(\psi)=\chi_3$,  $\varphi\cup \psi$ belongs to the image of $\on{Res}\colon H^2(G,\Z/2\Z)\to H^2(H,\Z/2\Z)$ and $\varphi\cup \on{Res}^G_{H}(\chi_3)=0$.
\end{prop}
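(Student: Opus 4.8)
The plan is to run everything through Dwyer's Theorem (\Cref{dwyer}) and to build the relevant homomorphisms into $U_5$ and $\cl{U}_5$ explicitly, in the spirit of the proof of \Cref{main-h90}. By \Cref{dwyer}(iv), $\ang{\chi_1,\chi_2,\chi_3,\chi_1}$ is defined (resp.\ vanishes) if and only if $(\chi_1,\chi_2,\chi_3,\chi_1)\colon G\to(\Z/2\Z)^4$ lifts to a continuous homomorphism $G\to\cl{U}_5$ (resp.\ $G\to U_5$). Since the coefficients are trivial, such a lift is a matrix
\[
\begin{bmatrix}
1 & \chi_1 & a & b & f\\
0 & 1 & \chi_2 & c & d\\
0 & 0 & 1 & \chi_3 & e\\
0 & 0 & 0 & 1 & \chi_1\\
0 & 0 & 0 & 0 & 1
\end{bmatrix},
\]
where $a,b,c,d,e$ --- and, in the $U_5$ case, $f$ --- are continuous $1$-cochains satisfying $\partial a=\chi_1\cup\chi_2$, $\partial c=\chi_2\cup\chi_3$, $\partial e=\chi_3\cup\chi_1$, $\partial b=\chi_1\cup c+a\cup\chi_3$, $\partial d=\chi_2\cup e+c\cup\chi_1$ and $\partial f=\chi_1\cup d+a\cup e+b\cup\chi_1$ (signs irrelevant since $p=2$). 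The obstruction to upgrading a $\cl{U}_5$-lift to a $U_5$-lift is the class $\varepsilon:=[\chi_1\cup d+a\cup e+b\cup\chi_1]\in H^2(G,\Z/2\Z)$.

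I would then set up the dictionary between such a matrix and the pair $(\varphi,\psi)$. The top-left $3\times3$ block is a continuous homomorphism $G\to U_3$ of the type in \Cref{p-u3}, so for $\varphi:=a|_H$ one automatically has $\on{Cor}^G_H(\varphi)=\chi_2$ (converse part of \Cref{p-u3}(2)), and conversely \Cref{p-u3}(2) produces, from any $\varphi$ with $\on{Cor}^G_H(\varphi)=\chi_2$, a suitable $a$ with $a|_H=\varphi$. The bottom-right $3\times3$ block is a homomorphism $G\to U_3$ with superdiagonal $(\chi_3,\chi_1)$; composing it with the group automorphism of $U_3$ sending the matrix with off-diagonal entries $(x,y,z)$ to the one with entries $(y,x,z+xy)$ turns it into one of the shape of \Cref{p-u3}, and since $\chi_1|_H=0$ its $(1,3)$-entry still restricts to $e|_H$; thus $\psi:=e|_H$ satisfies $\on{Cor}^G_H(\psi)=\chi_3$, and conversely. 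By the projection formula applied to $\on{Cor}^G_H$ together with $\on{Res}^G_H(\chi_1)=0$, the cup products $\chi_1\cup\chi_2$, $\chi_2\cup\chi_3$, $\chi_3\cup\chi_1$ all vanish automatically.

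For the forward implications, restrict a given lift to $H$ and use $\chi_1|_H=0$: the relation for $\partial b$ restricts to $\partial(b|_H)=\varphi\cup\on{Res}^G_H(\chi_3)$, giving $\varphi\cup\on{Res}^G_H(\chi_3)=0$; in the $U_5$ case the relation for $\partial f$ restricts to $\partial(f|_H)=\varphi\cup\psi$, giving $\varphi\cup\psi=0$; in the $\cl{U}_5$ case instead $\on{Res}^G_H(\varepsilon)=\varphi\cup\psi$, so $\varphi\cup\psi\in\on{Im}(\on{Res}^G_H)$. With the corner dictionary this gives the ``only if'' parts of (1) and (2).

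The converse is the substance of the proof. Given $\varphi,\psi$ as in (1) or (2), produce $a$ and $e$ from the dictionary; note $\chi_2\cup\chi_3=\on{Cor}^G_H(\varphi\cup\on{Res}^G_H(\chi_3))=0$, so a cochain $c$ with $\partial c=\chi_2\cup\chi_3$ exists, and the remaining task is to choose $c$ so that $\varepsilon_b:=[\chi_1\cup c+a\cup\chi_3]$ and $\varepsilon_d:=[\chi_2\cup e+c\cup\chi_1]$ in $H^2(G,\Z/2\Z)$ both vanish (yielding a $\cl{U}_5$-lift, hence the ``if'' direction of (2)), and, in case (1), so that in addition $\varepsilon=0$. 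Replacing $c$ by $c+z$ for a cocycle $z$ shifts both $\varepsilon_b$ and $\varepsilon_d$ by $\chi_1\cup[z]$; since $\on{Res}^G_H(\varepsilon_b)=\varphi\cup\on{Res}^G_H(\chi_3)=0$, \Cref{cyclicext} applied to $\chi_1$ gives $\varepsilon_b\in\chi_1\cup H^1(G,\Z/2\Z)$, so a suitable $z$ kills $\varepsilon_b$; the residual class $\varepsilon_b+\varepsilon_d$ is independent of $c$, and using $\varphi\cup\on{Res}^G_H(\chi_3)=0$ one checks $\on{Res}^G_H(\varepsilon_b+\varepsilon_d)=\on{Res}^G_H\on{Cor}^G_H(\varphi\cup\psi)=0$ (by the hypothesis that $\on{Cor}^G_H(\varphi\cup\psi)=0$, which is how ``$\varphi\cup\psi\in\on{Im}(\on{Res}^G_H)$'' reads via \Cref{cyclicext}, and a fortiori in case (1)), so $\varepsilon_b+\varepsilon_d\in\chi_1\cup H^1(G,\Z/2\Z)$. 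The main obstacle is that this residual class need not be removable by adjusting $c$ alone; I expect one must realize $\varphi$ and $\psi$ through elements of a Hilbert 90 module $M$ for $(G,\theta)$ (which exists by \Cref{hilbert-90-equiv}) --- writing $\varphi=\chi_\eta$, $\psi=\chi_\mu$ and tracking norm relations --- and then use \Cref{cup-norm} and \Cref{omega-lemma} to kill it, splitting into cases according to the linear relations among $\chi_1,\chi_2,\chi_3$ exactly as in the proof of \Cref{main-h90}. Once a $\cl{U}_5$-lift is in hand, case (1) is finished by the same trick as there: $\on{Res}^G_H(\varepsilon)=\varphi\cup\psi=0$, so $\varepsilon\in\chi_1\cup H^1(G,\Z/2\Z)$ by \Cref{cyclicext}, and adjusting the $(2,5)$-entry $d$ by an appropriate cocycle kills $\varepsilon$ while preserving $\varepsilon_b=\varepsilon_d=0$.
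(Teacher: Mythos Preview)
Your forward (``only if'') directions are correct and essentially match the paper: restricting a lift to $H$ and using $\chi_1|_H=0$ reads off the required conditions on $\varphi=a|_H$ and $\psi=e|_H$, and \Cref{p-u3}(2) gives $\on{Cor}^G_H(\varphi)=\chi_2$, $\on{Cor}^G_H(\psi)=\chi_3$.

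The gap is in the converse, and you have correctly located it. After adjusting $c$ to kill $\varepsilon_b$, the residual $\varepsilon_d$ still lies in $\chi_1\cup H^1(G,\Z/2\Z)$ but cannot be removed by a further adjustment of $c$ without reintroducing $\varepsilon_b$, and the only other moves available (shifting $a$ or $e$ by $\chi_1$) contribute $\chi_1\cup\chi_3$ or $\chi_2\cup\chi_1$, which are already zero. Your proposed rescue via a Hilbert 90 module is a wrong turn: the paper's proof of this proposition uses no Hilbert 90 module, and the analogy with the proof of \Cref{main-h90} is misleading here.

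The idea you are missing is structural rather than cochain-by-cochain. The surjection $U_5\to U_3\times U_3$ that forgets the upper-right $2\times 2$ block has abelian kernel $P$; once $(h,h')\colon G\to U_3\times U_3$ is built from $\varphi,\psi$ via \Cref{p-u3}(2), the obstruction to lifting to $U_5$ is a \emph{single} class in $H^2(G,P)$. The point is that, twisted by $(h,h')$, one has $P\simeq\on{Ind}^G_H(\Z/2\Z)\otimes\on{Ind}^G_H(\Z/2\Z)\simeq\on{Ind}^G_H((\Z/2\Z)^{\oplus 2})$, so by Faddeev--Shapiro the restriction $H^2(G,P)\to H^2(H,P)$ is \emph{injective}. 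The restricted obstruction is explicitly the $2\times 2$ matrix with entries $\varphi\cup\on{Res}^G_H(\chi_3)$, $\varphi\cup\psi$, $\on{Res}^G_H(\chi_2\cup\chi_3)$, $\on{Res}^G_H(\chi_2)\cup\psi$, and one checks (exactly the computation $\on{Res}^G_H(\chi_2)\cup\psi=\varphi\cup\psi+g(\varphi\cup g\psi)$ you would have done) that the vanishing of the first two forces the other two. Part (2) is the same argument with $\cl{P}=P/(\Z/2\Z)$: the obstruction in $H^2(G,\cl{P})$ vanishes iff the class in $H^2(G,P)$ comes from $H^2(G,\Z/2\Z)$, and again Faddeev--Shapiro reduces this to $H$. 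In short, package your coupled scalar obstructions $\varepsilon_b,\varepsilon_d$ into one obstruction with coefficients in an induced module; Shapiro then gives for free the simultaneous vanishing you could not arrange by hand.
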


\begin{proof}
Let $P$ be the normal subgroup of $U_5\subset \on{GL}_5(\F_2)$ given by 
	\[P\coloneqq \begin{bmatrix}
		1 & 0 & 0 & * & * \\
		& 1 & 0 & * & * \\
		&   & 1 & 0 & 0 \\
		&   &   & 1 & 0 \\
		&   &   &   & 1
	\end{bmatrix}.\]
	Note that $P$ is the kernel of the surjective homomorphism $U_5\to U_3\times U_3$ which forgets the $2\times 2$ upper-right square of a unitriangular matrix. We write $\cl{P}$ for the quotient of $P$ by the center of $U_5$. We obtain a commutative diagram
	\begin{equation}\label{cocycle-diag}
	\begin{tikzcd}
	& 1\arrow[d]  & 1 \arrow[d]\\
	& \Z/2\Z \arrow[r,equal] \arrow[d]  & \Z/2\Z \arrow[d] \\	
	1 \arrow[r] & P \arrow[d] \arrow[r] & U_5 \arrow[d] \arrow[r] & U_3\times U_3 \arrow[r] \arrow[d, equal] & 1 \\
	1 \arrow[r] & \cl{P}\arrow[d] \arrow[r] & \cl{U}_5\arrow[d] \arrow[r] & U_3\times U_3 \arrow[r] & 1\\
	& 1 & 1
	\end{tikzcd}
	\end{equation}
	where the rows and columns are short exact sequences. Since $P$ is abelian, the rows endow $P$ and $\cl{P}$ with the structure of $(U_3\times U_3)$-modules, and the quotient map $P\to \cl{P}$ is $(U_3\times U_3)$-equivariant.

    \begin{lemma}\label{u3u3}
Let $\varphi,\psi\in H^1(H,\Z/2\Z)$ be such that $\on{Cor}^G_{H}(\varphi)=\chi_2$ and $\on{Cor}^G_{H}(\psi)=\chi_3$, and let $h,h':G\to U_3$ be continuous homomorphisms given by matrices
 \begin{equation}\label{h-h'-matrices}
    \begin{bmatrix}
        1 & \chi_1 & \rho \\
        0 & 1 & \chi_2 \\
        0 & 0 & 1
    \end{bmatrix},\qquad 
    \begin{bmatrix}
        1 & \chi_3 & \rho' \\
        0 & 1 & \chi_1 \\
        0 & 0 & 1
    \end{bmatrix},
 \end{equation}
 where $\rho|_{H}=\varphi$ and $(\rho')|_{H}=\psi$.
    
(1) The homomorphism $(h,h')\colon G\to U_3\times U_3$ lifts to a continuous homomorphism $G\to U_5$ if and only if $\varphi\cup\psi=\varphi\cup \on{Res}^G_{H}(\chi_3)=0$.

(2) The homomorphism $(h,h')\colon G\to U_3\times U_3$ lifts to a continuous homomorphism $G\to \cl{U}_5$ if and only if $\varphi\cup\psi$ belongs to the image of $H^2(G,\Z/2\Z)\to H^2(H,\Z/2\Z)$ and $\varphi\cup \on{Res}^G_{H}(\chi_3)=0$.
    \end{lemma}
  
	\begin{proof}
 Consider the following normal subgroups of $U_3$:
\[
N\coloneqq \begin{bmatrix}
    1 & 0 & *\\
    & 1 & * \\
    & & 1
\end{bmatrix},\qquad
N'\coloneqq \begin{bmatrix}
    1 & * & *\\
    & 1 & 0 \\
    & & 1
\end{bmatrix}.
\]
We have an isomorphism of $(U_3\times U_3)$-modules $N\otimes N'\to P$ given by
\[
		\begin{bmatrix}
			1 & 0 & f_1  \\
			& 1 & e_1  \\
			&    & 1
		\end{bmatrix}\otimes\begin{bmatrix}
			1 & e_2 & f_2  \\
			& 1 & 0  \\
			&    & 1
		\end{bmatrix}\mapsto
  \begin{bmatrix}
			1 & 0 & 0 & f_1 e_2 & f_1 f_2 \\
			& 1 & 0 & e_1 e_2 & e_1 f_2 \\
			&   & 1 & 0 & 0 \\
            &   &   & 1 & 0 \\
            &   &   &   & 1
		\end{bmatrix}.
		\]

 We obtain a homomorphism 
 \[H^1(U_3,N)\otimes H^1(U_3,N')\xrightarrow{\cup} H^2(G,P).\]
 The natural projections $t:U_3\to N$ and $t':U_3\to N'$ are  $1$-cocycles, and a direct calculation shows that the class of the middle row of (\ref{cocycle-diag}) in $H^2(U_3\times U_3,P)$ is equal to $t\cup t'$. Consider the commutative diagram
\[
		\begin{tikzcd}
			H^1(U_3,N)\otimes H^1(U_3,N') \arrow[d,"h^*\otimes (h')^*"] \arrow[r,"\cup"] & H^2(U_3 \times U_3,P) \arrow[d,"\text{$(h,h')^*$}"] \\
			H^1(G,N)\otimes H^1(G,N')  \arrow[d,"\on{Res}\otimes\on{Res}"] \arrow[r,"\cup"] & H^2(G,P) \arrow[d,"\on{Res}"] \\	
            H^1(H,N)\otimes H^1(H,N')  \arrow[r,"\cup"] & H^2(H,P).
		\end{tikzcd}
		\]
It follows that the map $H^1(U_3\times U_3,P)\to H^2(H,P)$ sends
\begin{equation}\label{restrict-obstruction}
  t\cup t'\mapsto \begin{bmatrix}
			1 & 0 & 0 & \varphi\cup \on{Res}^G_H(\chi_3) & \varphi\cup\psi \\
			& 1 & 0 & \on{Res}^G_H(\chi_2\cup\chi_3) & \on{Res}^G_H(\chi_2)\cup\psi \\
			&   & 1 & 0 & 0 \\
            &   &   & 1 & 0 \\
            &   &   &   & 1
		\end{bmatrix}.
\end{equation}

(1) The homomorphism $(h,h')\colon G\to U_3\times U_3$ lifts to $U_5$ if and only if
the image of $t\cup t'$ in $H^2(G,P)$ vanishes. Since
\[P\simeq N\otimes N'\simeq \on{Ind}_{H}^G(\Z/2\Z)^{\otimes 2}\simeq \on{Ind}_{H}^G((\Z/2\Z)^{\oplus 2}),\]
we have the Faddeev--Shapiro isomorphism
\[H^2(G,P)\xrightarrow{\on{Res}} H^2(H,P)\to H^2(H,(\Z/2\Z)^{\oplus 2}),\]
and hence the restriction $H^2(G,P)\to H^2(H,P)$ is injective.
Therefore, $(h,h')$ lifts to $U_5$ if and only if the image of $t\cup t'$ in $H^2(H,P)$ vanishes. In view of (\ref{restrict-obstruction}), this is equivalent to
\[\varphi\cup\psi=\varphi\cup \on{Res}^G_H(\chi_3)=\on{Res}^G_H(\chi_2)\cup\psi=\on{Res}^G_H(\chi_2\cup\chi_3)=0.\] 
This implies that $\varphi\cup\psi=\varphi\cup \on{Res}^G_H(\chi_3)=0$ in $H^2(H,\Z/2\Z)$, but it is in fact equivalent to this. Indeed, if $\varphi\cup\psi=\varphi\cup \on{Res}^G_H(\chi_3)=0$, then by the projection formula we have \[\chi_2\cup\chi_3=\on{Cor}^G_{H}(\varphi)\cup\chi_3=\on{Cor}^G_{H}(\varphi\cup \on{Res}^G_H(\chi_3))=0,\] and hence $\on{Res}^G_H(\chi_2\cup\chi_3)=0$. Moreover, letting $g\in G-H$, we have \[\varphi\cup g(\psi)=\varphi\cup \on{Res}^G_H(\chi_3) + \varphi\cup \psi = 0+0=0,\]
and hence
\[\on{Res}^G_H(\chi_2)\cup\psi=\varphi\cup\psi+g(\varphi)\cup\psi=\varphi\cup\psi+g(\varphi\cup g(\psi))=0+0=0.\]
  
(2) The left vertical sequence in (\ref{cocycle-diag}) induces an exact sequence
		\begin{equation}\label{p-exact}
		H^2(G,\Z/2\Z) \to H^2(G,P)\to H^2(G,\cl{P}).
		\end{equation}
The homomorphism $(h,h')\colon G\to U_3\times U_3$ lifts to a homomorphism $G\to \cl{U}_5$ if and only if the pullback of the middle row of (\ref{cocycle-diag}) along $(h,h')$ splits, that is, if and only if the corresponding element in $H^2(G,\cl{P})$ is trivial. In view of (\ref{p-exact}), this happens if and only if the image of $t\cup t'$ in $H^2(G,P)$ belongs to the image of $H^2(G,\Z/2\Z)\to H^2(G,P)$. We have a commutative square
\[
\begin{tikzcd}
    H^2(G,\Z/2\Z) \arrow[d,"\on{Res}"] \arrow[r] & H^2(G,P) \arrow[d,"\on{Res}",hook] \\
    H^2(H,\Z/2\Z) \arrow[r] & H^2(H,P),
\end{tikzcd}
\]
where, as explained earlier, the right vertical map is injective by the Faddeev--Shapiro lemma. Therefore the image of $t\cup t'$ in $H^2(G,P)$ belongs to the image of $H^2(G,\Z/2\Z)\to H^2(G,P)$ if and only if the image of $t\cup t'$ in $H^2(H,P)$ belongs to the image of the composition $H^2(G,\Z/2\Z)\to H^2(H,\Z/2\Z)\to H^2(H,P)$. By (\ref{restrict-obstruction}), this is equivalent to the existence of $u\in H^2(G,\Z/2\Z)$ such that
\[\varphi\cup\psi=u,\qquad \varphi\cup \on{Res}^G_H(\chi_3)=\on{Res}^G_H(\chi_2)\cup\psi=\on{Res}^G_H(\chi_2\cup \chi_3)=0.\]
This implies that $\varphi\cup\psi$ comes from $H^2(G,\Z/2\Z)$ and that $\varphi\cup \on{Res}^G_H(\chi_3)=0$, but it is in fact equivalent to this. Indeed, given $u\in H^2(G,\Z/2\Z)$ such that $\varphi\cup\psi=u$ and $\varphi\cup \on{Res}^G_H(\chi_3)=0$, by the projection formula we have $\chi_2\cup\chi_3=0$, and hence $\on{Res}^G_H(\chi_2\cup\chi_3)=0$. Moreover, letting $g\in G-H$, we have 
\[\varphi\cup g(\psi)=\varphi\cup \on{Res}^G_H(\chi_3)+\varphi\cup\psi=0+u=u,\] and hence
\[\on{Res}^G_H(\chi_2)\cup\psi=\varphi\cup\psi+g(\varphi)\cup\psi=\varphi\cup\psi+g(\varphi\cup g(\psi))=u+g(u)=0.\qedhere\]
  \end{proof}
		
Using \Cref{u3u3}, we complete the proof of \Cref{u5-criterion} as follows. Suppose first that the Massey product $\ang{\chi_1,\chi_2,\chi_3,\chi_1}$ vanishes (resp. is defined). By \Cref{dwyer}, the homomorphism $(\chi_1,\chi_2,\chi_3,\chi_1)\colon G\to (\Z/2\Z)^4$ lifts to a homomorphism $G\to U_5$ (resp. $G\to\cl{U}_5$) and hence to a homomorphism $(h,h')\colon G\to U_3\times U_3$ given by matrices as in (\ref{h-h'-matrices}). We let $\varphi,\psi\in H^1(H,\Z/2\Z)$ be such that $\rho|_{H}=\varphi$ and $(\rho')|_{H}=\psi$. By \Cref{p-u3}(2), we have $\on{Cor}^G_{H}(\varphi)=\chi_2$ and $\on{Cor}^G_{H}(\psi)=\chi_3$ in $H^1(G,\Z/2\Z)$. By construction, $(h,h')$ lifts to $U_5$ (resp. $\cl{U}_5$), hence \Cref{u3u3} implies that $\varphi\cup \psi=\varphi\cup \on{Res}^G_H(\chi_3)=0$ in $H^1(H,\Z/2\Z)$ (resp. $\varphi\cup \psi$ comes from $H^2(G,\Z/2\Z)$ and $\varphi\cup \on{Res}^G_H(\chi_3)=0$ in $H^1(H,\Z/2\Z)$).

  Conversely, let $\varphi,\psi\in H^1(H,\Z/2\Z)$ be such that $\on{Cor}^G_{H}(\varphi)=\chi_2$, $\on{Cor}^G_{H}(\psi)=\chi_3$, and $\varphi\cup \psi=\varphi\cup \on{Res}^G_H(\chi_3)=0$ in $H^1(H,\Z/2\Z)$ (resp. $\varphi\cup \psi$ comes from $H^2(G,\Z/2\Z)$ and $\varphi\cup \on{Res}^G_H(\chi_3)=0$ in $H^1(H,\Z/2\Z)$). By \Cref{p-u3}(2), there exist homomorphisms $h,h'\colon G\to U_3$ as in (\ref{h-h'-matrices}) such that $\rho|_{H}=\varphi$ and $(\rho')|_{H}=\psi$. By \Cref{u3u3}, the homomorphism $(h,h')$ lifts to $U_5$ (resp. $\cl{U}_5$). On the other hand, the homomorphism $(h,h')\colon G\to U_3\times U_3$ is a lift of $(\chi_1,\chi_2,\chi_3,\chi_1)\colon G\to (\Z/2\Z)^4$. Therefore $(\chi_1,\chi_2,\chi_3,\chi_1)$ lifts to $U_5$ (resp. $\cl{U}_5$). By \Cref{dwyer}, the Massey product $\ang{\chi_1,\chi_2,\chi_3,\chi_1}$ is defined. 
\end{proof}

\begin{prop}\label{pi-lambda-res}
    Let $(G,\theta)$ be a $2$-oriented profinite group, let $M$ be a Hilbert 90 module for $(G,\theta)$, let $a,b\in M^G$, and suppose that $\chi_a\neq 0$ and $\chi_b\neq 0$. Let $\alpha\in M^{G_a}$ be such that $N_{G/G_a}(\alpha)=b$ and $\beta\in M^{G_b}$ such that $N_{G/G_b}(\beta)=a$.

    Then we have an exact sequence of $G$-modules
\[
M^{(G_a)_{\alpha}}\oplus M^{G_b} \xrightarrow{\pi} M^{G_a}  \xrightarrow{\lambda} H^2(G,M)\xrightarrow{\on{Res}} H^2((G_b)_{\beta},M),
\]
where \[\pi(\zeta,\eta) = N_{G_a/(G_a)_{\alpha}}(\zeta)-N_{G/G_{b}}(\eta)\] and $\lambda(\delta)$ is the image of $\on{Cor}^G_{G_a}(\chi_{\alpha}\cup\chi_{\delta})\in H^2(G,\Z/2\Z)$ in $H^2(G,M)$.
\end{prop}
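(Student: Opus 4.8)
The plan is to reduce both exactness statements to identities in mod $2$ cohomology and then feed these into the exact sequences of \Cref{norm-cor}, \Cref{cyclicext} and \Cref{always-exact}. First some preliminaries. Since $\chi_a,\chi_b\neq 0$, the subgroups $G_a,G_b$ have index $2$ in $G$, and $\chi_\alpha\in H^1(G_a,\Z/2\Z)$, $\chi_\beta\in H^1(G_b,\Z/2\Z)$ are nonzero: if e.g. $\chi_\alpha=0$ then $\alpha\in 2M^{G_a}$, hence $b=N_{G/G_a}(\alpha)\in 2M^G$, contradicting $\chi_b\neq 0$; so $(G_a)_\alpha$ and $(G_b)_\beta$ have index $4$ in $G$. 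As $(G,\theta)$ is $2$-oriented it is cyclotomic, so $(\ref{fix-iota})$ and $(\ref{connecting})$ are available; moreover, for every closed subgroup $K\subseteq G$, the sequence $(\ref{fix-iota})$ together with $H^1(K,M)=0$ (using \Cref{subgroup}) shows $\iota_*\colon H^2(K,\Z/2\Z)\to H^2(K,M)$ is injective with image $H^2(K,M)[2]$. By \Cref{norm-cor} we have $\chi_b=\on{Cor}^G_{G_a}(\chi_\alpha)$ and $\chi_a=\on{Cor}^G_{G_b}(\chi_\beta)$; combined with the projection formula this gives $\lambda(\delta)=\iota_*(\chi_b\cup\chi_\delta)$ for $\delta\in M^G$. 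Finally, by \Cref{cyclicext} and the long exact sequence from \Cref{always-exact}, for a character $\chi$ of $G$ with kernel $K$ of index $2$ one has $\on{Ker}\big(\on{Res}^G_K\colon H^2(G,\Z/2\Z)\to H^2(K,\Z/2\Z)\big)=\chi\cup H^1(G,\Z/2\Z)$ and $\on{Ker}\big(\on{Cor}^G_K\colon H^2(K,\Z/2\Z)\to H^2(G,\Z/2\Z)\big)=\on{Im}\big(\on{Res}^G_K\big)$; the same statements hold for $\chi_\alpha$ over $G_a$ and $\chi_\beta$ over $G_b$.

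\textbf{Exactness at $M^{G_a}$.} The inclusion $\on{Im}(\pi)\subseteq\on{Ker}(\lambda)$ is a direct computation of $\lambda$ on the two summands. On $N_{G_a/(G_a)_\alpha}(M^{(G_a)_\alpha})$: by \Cref{norm-cor} for $(G_a)_\alpha\subseteq G_a$ and the projection formula, $\chi_\alpha\cup\chi_{N_{G_a/(G_a)_\alpha}(\zeta)}=\on{Cor}^{G_a}_{(G_a)_\alpha}\!\big(\on{Res}^{G_a}_{(G_a)_\alpha}(\chi_\alpha)\cup\chi_\zeta\big)=0$ since $\on{Res}^{G_a}_{(G_a)_\alpha}(\chi_\alpha)=0$. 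On $N_{G/G_b}(M^{G_b})$: using $\lambda(\delta)=\iota_*(\chi_b\cup\chi_\delta)$ for $\delta\in M^G$, \Cref{norm-cor} and the projection formula, $\chi_b\cup\chi_{N_{G/G_b}(\eta)}=\on{Cor}^G_{G_b}\!\big(\on{Res}^G_{G_b}(\chi_b)\cup\chi_\eta\big)=0$ since $\on{Res}^G_{G_b}(\chi_b)=0$. For the reverse inclusion, suppose $\lambda(\delta)=0$; by injectivity of $\iota_*$ this says $\on{Cor}^G_{G_a}(\chi_\alpha\cup\chi_\delta)=0$ in $H^2(G,\Z/2\Z)$, hence $\chi_\alpha\cup\chi_\delta=\on{Res}^G_{G_a}(u)$ for some $u\in H^2(G,\Z/2\Z)$. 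When $\delta\in M^G$ the argument is short: then $\lambda(\delta)=0$ gives $\chi_b\cup\chi_\delta=0$, so by \Cref{cyclicext} for $\chi_b$ we have $\chi_\delta=\on{Cor}^G_{G_b}(\chi_\eta)$ for some $\eta\in M^{G_b}$, whence $\chi_{\delta-N_{G/G_b}(\eta)}=0$ by \Cref{norm-cor}, i.e. $\delta-N_{G/G_b}(\eta)\in 2M^G\subseteq N_{G_a/(G_a)_\alpha}(M^{(G_a)_\alpha})$, and $\delta\in\on{Im}(\pi)$. The general case is reduced to this one: using the freedom to modify $u$ by an element of $\chi_a\cup H^1(G,\Z/2\Z)=\on{Ker}(\on{Res}^G_{G_a})$, together with a Mackey-formula comparison of the index-$2$ subgroups $G_a,G_b$ of $G$ (distinguishing $G_a=G_b$ from $G_a\neq G_b$, and in the latter case relating $G_a\cap G_b$, of index $4$, to $(G_a)_\alpha$), one finds $\eta\in M^{G_b}$ and $\zeta\in M^{(G_a)_\alpha}$ with $\chi_\alpha\cup\chi_{\delta-N_{G/G_b}(\eta)-N_{G_a/(G_a)_\alpha}(\zeta)}=0$ in $H^2(G_a,\Z/2\Z)$; by \Cref{cyclicext} for $\chi_\alpha$ and \Cref{norm-cor} this means $\delta-N_{G/G_b}(\eta)-N_{G_a/(G_a)_\alpha}(\zeta)\in N_{G_a/(G_a)_\alpha}(M^{(G_a)_\alpha})$, so $\delta\in\on{Im}(\pi)$.

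\textbf{Exactness at $H^2(G,M)$.} That $\on{Res}^G_{(G_b)_\beta}\circ\lambda=0$ is again Mackey plus the projection formula: $\on{Res}^G_{(G_b)_\beta}\on{Cor}^G_{G_a}$ is a sum of terms $\on{Cor}\,\on{Res}$, and in each one the inner restriction annihilates $\chi_\alpha\cup\chi_\delta$ because, after restricting to the relevant intersection, $\chi_\alpha$ already restricts to $0$ on $(G_a)_\alpha$ while the remaining factor is absorbed by the projection formula (here one uses the relation $a=N_{G/G_b}(\beta)$ to rewrite $\on{Res}^G_{G_b}(\chi_a)$ in terms of $\chi_\beta$). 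For the reverse inclusion, let $\xi\in H^2(G,M)$ with $\on{Res}^G_{(G_b)_\beta}(\xi)=0$. Peeling off along $(G_b)_\beta\trianglelefteq G_b\trianglelefteq G$ via the exact sequences $0\to H^2(C,M^K)\xrightarrow{\on{Inf}}H^2(L,M)\xrightarrow{\on{Res}}H^2(K,M)$ (valid since $H^1(K,M)=0$), and using that for a cyclic group $C$ of order $2$ one has $H^2(C,-)\cong\widehat H^0(C,-)$ and that — by a standard computation with cyclic groups combined with \Cref{serre-brauer} — the composite $\widehat H^0(C,M^K)\xrightarrow{\sim}H^2(C,M^K)\xrightarrow{\on{Inf}}H^2(L,M)$ carries the class of $m\in M^L$ to $\iota_*(\chi\cup\chi_m)$ for the character $\chi$ of $L$ with kernel $K$, one writes $\on{Res}^G_{G_b}(\xi)=\iota_*(\chi_\beta\cup\chi_n)$ with $n\in M^{G_b}$ and then runs the same Mackey bookkeeping as above in the opposite direction (now using $b=N_{G/G_a}(\alpha)$) to produce $\delta\in M^{G_a}$ with $\lambda(\delta)=\xi$; in particular this exhibits $\xi=\iota_*(\on{Cor}^G_{G_a}(\chi_\alpha\cup\chi_\delta))\in\on{Im}(\iota_*)$, so the construction also accounts for the needed control on the exponent of $\on{Ker}(\on{Res}^G_{(G_b)_\beta})$.

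\textbf{Main obstacle.} The hard part is the Mackey-formula bookkeeping underlying the reverse inclusions (at $M^{G_a}$ and, equivalently, at $H^2(G,M)$): one must track the two nested towers of index-$2$ subgroups $(G_a)_\alpha\subset G_a\subset G$ and $(G_b)_\beta\subset G_b\subset G$, carry out the case distinction $G_a=G_b$ versus $G_a\neq G_b$ (comparing $G_a\cap G_b$, $(G_a)_\alpha$ and $(G_b)_\beta$ in the latter case), and use precisely how the relations $b=N_{G/G_a}(\alpha)$ and $a=N_{G/G_b}(\beta)$ — i.e. $\chi_b=\on{Cor}^G_{G_a}(\chi_\alpha)$, $\chi_a=\on{Cor}^G_{G_b}(\chi_\beta)$, which encode $\chi_a\cup\chi_b=0$ — allow the error terms produced by these restrictions and corestrictions to be absorbed into norms.
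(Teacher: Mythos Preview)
Your approach diverges substantially from the paper's, and the divergence is not merely cosmetic: the parts you label ``Mackey-formula bookkeeping'' are precisely where the argument lives, and you have not carried them out.

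The paper does not chase corestrictions and restrictions through the subgroup lattice at all. Instead it writes down an explicit four-term exact sequence of $\Z[U_3]$-permutation modules
\[
0 \to \Z[U_3/\langle\sigma_2,\tau\rangle] \to \Z[U_3/\langle\sigma_2\rangle] \oplus \Z[U_3/\langle\sigma_1,\tau\rangle] \to \Z[U_3/\langle\sigma_1\rangle] \oplus \Z \to \Z \to 0,
\]
pulls it back along a homomorphism $h\colon G\to U_3$ built from $\chi_a,\chi_b,\alpha$ via \Cref{p-u3}(2) (so that the four stabilizers pull back to $G_a$, $(G_a)_\alpha$, $G_b$, $(G_b)_\beta$), applies $\on{Hom}_{\Z}(-,M)$, and then uses Faddeev--Shapiro together with $H^1=0$ on each induced term to obtain the exact sequence for free. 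The only remaining work is to identify the resulting connecting map with $\lambda$, which is done by comparing with a second resolution and invoking \Cref{serre-brauer}. All the combinatorics of how the four subgroups interact is absorbed into the single verification that the $U_3$-sequence above is exact.

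Your forward inclusions $\on{Im}(\pi)\subseteq\on{Ker}(\lambda)$ and (with more care) $\on{Res}\circ\lambda=0$ are fine. But for the reverse inclusion at $M^{G_a}$ you arrive at $\chi_\alpha\cup\chi_\delta=\on{Res}^G_{G_a}(u)$ and then must produce $\eta\in M^{G_b}$ with $\chi_\alpha\cup\on{Res}^G_{G_a}\!\big(\on{Cor}^G_{G_b}(\chi_\eta)\big)=\on{Res}^G_{G_a}(u)$. Nothing you have written explains why such an $\eta$ exists. Saying one ``relates $G_a\cap G_b$ to $(G_a)_\alpha$'' does not suffice: these are in general \emph{different} index-$2$ subgroups of $G_a$ (the first is $\on{Ker}(\chi_b|_{G_a})=\on{Ker}(\chi_\alpha+g\chi_\alpha)$, the second is $\on{Ker}(\chi_\alpha)$), and the freedom to change $u$ by $\chi_a\cup H^1(G,\Z/2\Z)$ does not visibly interact with the image of $\on{Cor}^G_{G_b}$ in the way you need. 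The same gap recurs in dual form at $H^2(G,M)$: after peeling you reach $\on{Res}^G_{G_b}(\xi)=\iota_*(\chi_\beta\cup\chi_n)$, a description on the $G_b$-side, but membership in $\on{Im}(\lambda)$ is a statement about a corestriction from the $G_a$-side, and the promised ``Mackey bookkeeping in the opposite direction'' linking the two is never supplied.

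In short, what you call the main obstacle is not a technicality to be cleaned up later; it is the proof. The paper's permutation-module resolution is exactly the device that packages this obstacle into one finite exactness check over $\Z[U_3]$.
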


\begin{proof}
    We define the following elements of $U_3$:
\[
\sigma_1\coloneqq \begin{bmatrix}
    1 & 1 & 0\\
    0 & 1 & 0\\
    0 & 0 & 1
\end{bmatrix},\qquad
\sigma_2\coloneqq \begin{bmatrix}
    1 & 1 & 0\\
    0 & 1 & 0\\
    0 & 0 & 1
\end{bmatrix},\qquad
\tau\coloneqq [\sigma_1,\sigma_2]= \begin{bmatrix}
    1 & 0 & 1\\
    0 & 1 & 0\\
    0 & 0 & 1
\end{bmatrix}.
\]
We have the following exact sequence of $U_3$-modules:
\begin{equation}\label{resolution}
0 \to \Z[U_3/\langle\sigma_2,\tau\rangle] \xrightarrow{f} \Z[U_3/\langle\sigma_2\rangle] \oplus \Z[U_3/\langle\sigma_1,\tau\rangle] \xrightarrow{g} \Z[U_3/\langle\sigma_1\rangle] \oplus \Z \xrightarrow{k} \Z \to 0,
\end{equation}
where
\begin{align*}
f(1) & = (1+\tau, -1-\sigma_2), \\
g(1,0) & = (1+\sigma_2, -1), \\
g(0,1) & = (1+\tau, -1), \\
k(1,0) & = 1, \\
k(0,1) & = 2.
\end{align*}
By \Cref{p-u3}(2), there exists a continuous homomorphism $h\colon G\to U_3$ given by a matrix (\ref{a-b-rho}) such that $\rho|_{G_a}=\chi_{\alpha}$. Since $\chi_a$ and $\chi_b$ are non-zero, either $h$ is surjective or the image of $h$ is the subgroup of $U_3$ generated by $\sigma_1\sigma_2$. In either case, we have
\begin{equation}\label{correct-index}
    [G:G_a]=[G:G_b]=2,\qquad [G:(G_a)_\alpha]=[G:(G_b)_\beta]=4.
    \end{equation}
We regard (\ref{resolution}) as an exact sequence of $G$-modules via $h$. In view of (\ref{correct-index}), an application of the functor $\on{Hom}_{\Z}(-,M)$ yields the exact sequence of $G$-modules
\begin{equation}\label{mu-epsilon-pi}
0 \to M \xrightarrow{\mu} \on{Ind}_{(G_b)_{\beta}}^G(M)\oplus M \xrightarrow{\varepsilon} \on{Ind}_{(G_a)_{\alpha}}^G(M) \oplus \on{Ind}_{G_b}^G(M)\xrightarrow{\pi} \on{Ind}_{G_a}^G(M) \to 0,
\end{equation}
where 
\[\pi(\zeta,\eta) = N_{G_a/(G_a)_{\alpha}}(\zeta)-N_{G/G_{b}}(\eta).\]
Let $T = \on{Ker}(\pi) = \on{Coker}(\mu)$. Since $M$ is a Hilbert 90 module for $G$, by \Cref{subgroup} it is also a Hilbert 90 module for the open subgroups $(G_a)_{\alpha}$, $G_b$ and $(G_b)_{\beta}$, and hence by Faddeev--Shapiro's lemma
\begin{align*}
    H^1(G,\on{Ind}_{(G_b)_{\beta}}^G(M)\oplus M)=H^1((G_b)_{\beta},M)\oplus H^1(G,M)=0, \\
    H^1(G,\on{Ind}_{(G_a)_{\alpha}}^G(M) \oplus \on{Ind}_{G_b}^G(M))=H^1((G_a)_{\alpha},M)\oplus H^1(G_b,M)=0.
\end{align*}
Thus (\ref{mu-epsilon-pi}) yields the exact sequences
\[
M^{(G_a)_{\alpha}}\oplus M^{G_b} \xrightarrow{\pi} M^{G_a} \to H^1(G,T) \to 0
\]
and
\[
0\to H^1(G,T) \to H^2(G,M)\xrightarrow{\on{Res}} H^2((G_b)_{\beta},M).
\]
We obtain an exact sequence of $G$-modules
\[
M^{(G_a)_{\alpha}}\oplus M^{G_b} \xrightarrow{\pi} M^{G_a}  \xrightarrow{\lambda'} H^2(G,M)\xrightarrow{\on{Res}} H^2((G_b)_{\beta},M),
\]
where $\lambda'$ is defined as the composition $M^{G_a}\to H^1(G,T)\to H^2(G,M)$. It remains to show that $\lambda'=\lambda$. 

Let $\delta\in M^{G_a}$. By \Cref{serre-brauer}, the homomorphism $H^2(G_a,\Z/2\Z)\to H^2(G_a,M)$ sends $\chi_{\alpha}\cup \chi_{\delta}$ to $\omega_{\alpha}\cup\delta$, where $\omega_{\alpha}\in H^2(G_a,\Z)$ is defined as the Bockstein of $\chi_{\alpha}\in H^1(G_a,\Z/2\Z)$. Thus $\lambda(\delta)=\on{Cor}^G_{G_a}(\omega_a\cup\delta)$. 

Consider the following commutative diagram
\begin{equation}\label{three-squares-u3}
\begin{adjustbox}{max width=\textwidth}
\begin{tikzcd}
0 \arrow[r] & \Z[U_3/\langle\sigma_2,\tau\rangle] \arrow[d,equal] \arrow[r,"f"] & \Z[U_3/\langle\sigma_2\rangle] \oplus \Z[U_3/\langle\sigma_1,\tau\rangle] \arrow[d,"\text{$(1,0)$}"] \arrow[r,"g"] & \Z[U_3/\langle\sigma_1\rangle] \oplus \Z \arrow[r,"k"] \arrow[d,"l"] & \Z \arrow[d,"1+\sigma_1"] \arrow[r] & 0 \\
0\arrow[r] & \Z[U_3/\ang{\sigma_2,\tau}] \arrow[r,"1+\tau"] &  \Z[U_3/\ang{\sigma_2}] \arrow[r,"1-\tau"] & \Z[U_3/\ang{\sigma_2}] \arrow[r,"1"] & \Z[U_3/\ang{\sigma_2,\tau}] \arrow[r] & 0,
\end{tikzcd}
\end{adjustbox}
\end{equation}
where the top row is (\ref{resolution}) and
\begin{align*}
    l(1,0)&= 1+\sigma_1,\\
    l(0,1)&=1+\sigma_1+\tau+\sigma_1\tau.
\end{align*}
The commutativity of (\ref{three-squares-u3}) is easily verified using the explicit descriptions of the arrows. By applying the functor $\on{Hom}_{\Z}(-,M)$ to each term of (\ref{three-squares-u3}), we obtain the following commutative diagram with exact rows:
\begin{equation}\label{three-squares-g}
\begin{adjustbox}{max width=\textwidth}
\begin{tikzcd}
0 \arrow[r] & \on{Ind}_{G_a}^G(M) \arrow[d,"\nu"] \arrow[r] & \on{Ind}_{(G_a)_{\alpha}}^G(M) \arrow[r,"1-\tau"] \arrow[d] & \on{Ind}_{(G_a)_{\alpha}}^G(M) \arrow[r,"\nu'"] \arrow[d,"\text{$(1,0)$}"] & \on{Ind}_{G_a}^G(M) \arrow[d,equal] \arrow[r] & 0 \\  
0 \arrow[r] & M  \arrow[r,"\mu"] & \on{Ind}_{(G_b)_{\beta}}^G(M)\oplus M \arrow[r,"\varepsilon"] & \on{Ind}_{(G_a)_{\alpha}}^G(M) \oplus \on{Ind}_{G_b}^G(M) \arrow[r,"\pi"] & \on{Ind}_{G_a}^G(M) \arrow[r] & 0.
\end{tikzcd}
\end{adjustbox}
\end{equation}
Here $\nu$ is the map of (\ref{ind-norm}) for $H=G_a$ and $A=M$, and $\nu'$ is obtained by applying the functor $\on{Ind}^G_{G_a}$ to the map of (\ref{ind-norm}) for the profinite group $G_a$, the open subgroup $H=(G_a)_{\alpha}$ and the $G_a$-module $A=\on{Ind}^G_{G_a}(M)$.

Let $N\coloneqq \ang{\sigma_2,\tau}$. The bottom row of (\ref{three-squares-u3}) is induced from the exact sequence of $N$-modules
\[0\to \Z\to \Z[N/\ang{\sigma_2}]\xrightarrow{1-\tau} \Z[N/\ang{\sigma_2}]\to \Z\to 0.\] 
This sequence may be viewed as a sequence of $N/\ang{\sigma_2}$-modules. As $N/\ang{\sigma_2}$ is cyclic of order $2$, by \cite[Proposition 3.4.11]{gille2017central} the induced map $\Z\to H^2(N/\ang{\sigma_2},\Z)$ sends $1$ to the generator of $H^2(N/\ang{\sigma_2},\Z/2\Z)$, that is, to the Bockstein of the generator of $H^1(\Z,\Z/2\Z)$. It follows that the map $M^G\to H^2(G_a,M)$ induced by the top row of (\ref{three-squares-g}) is given by cup product with the Bockstein of $\chi_a$, that is, with $\omega_a$. Therefore (\ref{three-squares-g}) gives a commutative square 
\begin{equation}\label{lambda'-square}
\begin{tikzcd}
    M^{G_a} \arrow[r,"(-)\cup\omega_{\alpha}"] \arrow[d,equal] & H^2(G_a,M) \arrow[d,"\on{Cor}^G_{G_a}"] \\
    M^{G_a} \arrow[r,"\lambda'"] & H^2(G,M).
\end{tikzcd}
\end{equation}
We conclude that $\lambda'(\delta)= \on{Cor}^G_{G_a}(\omega_{\alpha}\cup\delta)=\lambda(\delta)$, as desired.\end{proof}

 \begin{proof}[Proof of \Cref{main-h90-degenerate}]
By \Cref{trivial-case}, we may assume that $\chi_1\neq 0$ and $\chi_2\neq 0$. By \Cref{hilbert-90-equiv}, there exists a Hilbert 90 module $M$ for $(G,\theta)$. Let $\iota\colon \Z/2\Z\hookrightarrow M$ be the unique injective homomorphism, and let $a,b,c\in M^G$ be such that $\chi_1=\chi_a$, $\chi_2=\chi_b$ and $\chi_3=\chi_c$; see below (\ref{connecting}) for the notation. Since $\chi_a$ and $\chi_b$ are non-trivial, by \Cref{u5-criterion}(2) there exist $\alpha,\delta \in M^{G_a}$ such that $\on{Cor}_{G/G_a}(\chi_{\alpha})=\chi_b$, $\on{Cor}_{G/G_a}(\chi_{\delta})=\chi_c$,  $\chi_{\alpha}\cup \chi_{\delta}$ belongs to the image of $H^2(G,\Z/2\Z)\to H^2(G_a,\Z/2\Z)$ and $\chi_{\alpha}\cup \chi_c=0$ in $H^2(G_a,\Z/2\Z)$.

Since $\chi_{\alpha}\cup\chi_{\delta}$ comes from $H^2(G,\Z/2\Z)$, we have $\on{Cor}^G_{G_a}(\chi_{\alpha}\cup\chi_{\delta})=0$. As $\chi_a$ and $\chi_b$ are non-trivial,  \Cref{pi-lambda-res} implies that $\lambda(\delta)=0$. It follows that $\delta$ belongs to the image the map $\pi$ appearing in \Cref{pi-lambda-res}, that is, $\delta = N_{G_a/(G_a)_{\alpha}}(\zeta)-N_{G/G_b}(\eta)$ for some $\zeta \in M^{(G_a)_{\alpha}}$ and $\eta \in M^{G_b}$. Letting $z \coloneqq N_{G/G_b}(\eta)\in M^G$, we obtain \[\chi_{\alpha}\cup\chi_{\delta} = \chi_{\alpha}\cup\on{Cor}^{G_a}_{(G_a)_{\alpha}}(\zeta)+\chi_{\alpha}\cup\chi_z= \chi_{\alpha}\cup\chi_z\qquad\text{in $H^2(G_a,\Z/2\Z)$.}\]
It follows that $\chi_{\alpha}\cup\chi_{\delta+z}=0$ in $H^2(G_a,\Z/2\Z)$. Moreover, \[N_{G/G_a}(\delta+z)=N_{G/G_a}(\delta)+2z=c+0=c,\] and so $\on{Cor}^G_{G_a}(\chi_{\delta+z})=\chi_c$. We already know that $\chi_a\neq 0$, $\on{Cor}^G_{G_a}(\chi_\alpha)=\chi_b$ and $\chi_{\alpha}\cup\chi_c=0$ in $H^1(G_a,\Z/p\Z)$, the elements $\alpha$ and $\delta+z$ satisfy the conditions of \Cref{u5-criterion}(2). By \Cref{u5-criterion}(2), the Massey product $\ang{\chi_1,\chi_2,\chi_3,\chi_1}$ vanishes.
 \end{proof}

\section{An example}\label{section-example}

In this section, we illustrate \Cref{main-h90} with an example. Consider the group $G$ given by the following presentation by generators and relations:
\begin{equation}\label{g-define}G\coloneqq\ang{a,b: a^2b=ba^2}.\end{equation}
As in the proof of \Cref{pi-lambda-res}, we define the following elements of $U_3\subset \on{GL}_3(\F_2)$:
\[
\sigma_1\coloneqq 
\begin{bmatrix}
    1 & 1 & 0 \\
    0 & 1 & 0 \\
    0 & 0 & 1
\end{bmatrix},\qquad
\sigma_2\coloneqq 
\begin{bmatrix}
    1 & 0 & 0 \\
    0 & 1 & 1 \\
    0 & 0 & 1
\end{bmatrix},\qquad
\tau\coloneqq [\sigma_1,\sigma_2]=
\begin{bmatrix}
    1 & 0 & 1 \\
    0 & 1 & 0 \\
    0 & 0 & 1
\end{bmatrix}.
\]
    The group $G$ fits into the diagram with exact rows
    \begin{equation}\label{h-define}
    \begin{tikzcd}
        1 \arrow[r] & H \arrow[r]\arrow[d,"\varphi|_H"] & G \arrow[r,"f"]\arrow[d,"\varphi"]  & \Z/2\Z \arrow[r] \arrow[d,equal] & 1 \\
        1 \arrow[r] & N' \arrow[r] & U_3 \arrow[r,"u_{2,3}"] & \Z/2\Z \arrow[r] & 1,
    \end{tikzcd}
    \end{equation}
    where $\varphi$ is determined by $\varphi(a)=\sigma_1$ and $\varphi(b)=\sigma_2$, the subgroup $H\subset G$ is defined as the kernel of $f$, and the subgroup $N'\subset U_3$ as the kernel of the coordinate map $u_{2,3}$.

\begin{prop}\label{counterexample-formal-h90}
    Let $\hat{G}$ be the pro-$2$ completion of $G$ defined in (\ref{g-define}) and let $\hat{H}\subset \hat{G}$ be the pro-$2$ completion of the subgroup $H\subset G$ considered in (\ref{h-define}).

    (1) The reduction maps $H^1(\hat{G},\Z/2^n\Z)\to H^1(\hat{G},\Z/2\Z)$ are surjective for all $n\geq 1$.

    (2) The reduction map $H^1(\hat{H},\Z/4\Z)\to H^1(\hat{H},\Z/2\Z)$ is not surjective.

    (3) There exist $\chi_1,\chi_2,\chi_3\in H^1(\hat{G},\Z/2\Z)$ such that $\ang{\chi_1,\chi_2,\chi_3}$ is defined but does not vanish.
\end{prop}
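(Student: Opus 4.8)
The plan is to treat the three parts essentially separately, exploiting two things: $\Z/2^n\Z$ carries the trivial $\hat G$-action, so $H^1(-,\Z/2^n\Z)$ is just the group of continuous homomorphisms and is governed by abelianizations; and Dwyer's Theorem (\Cref{dwyer}), which turns the Massey-product statement into a question about lifting a homomorphism to $U_3$ or $U_4$.

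For (1), I would note that the abelianization of $G$ is $\Z^2$ (the relator $a^2b=ba^2$ dies after abelianizing), so $H^1(\hat G,\Z/2^n\Z)=\on{Hom}(G,\Z/2^n\Z)=(\Z/2^n\Z)^2$, since any homomorphism from $G$ to a finite $2$-group factors through $\hat G$, and reduction mod $2$ is visibly surjective. For (2), I would run Reidemeister--Schreier on $H=\on{Ker}(f)$ with transversal $\{1,b\}$: the Schreier generators are $a$, $c\coloneqq bab^{-1}$, $d\coloneqq b^2$, and rewriting the two translates $r$ and $brb^{-1}$ of the relator $r=[a^2,b]$ yields the presentation $H=\ang{a,c,d\mid a^2=c^2,\ [a^2,d]=1}$ (the second translate rewrites to $c^2=da^2d^{-1}$, which, given $a^2=c^2$ and the centrality of $a^2$ in $G$, is equivalent to $[a^2,d]=1$). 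Abelianizing, $[a^2,d]=1$ is automatic and $a^2=c^2$ reads $2(a-c)=0$, so $H^{\mathrm{ab}}\cong\Z/2\Z\oplus\Z^2$ with torsion summand generated by the class of $ac^{-1}=[a,b]$. Hence $H^1(\hat H,\Z/2^n\Z)\cong\Z/2\Z\oplus(\Z/2^n\Z)^2$, and for $n=2$ the reduction map misses every character that is nontrivial on the torsion summand, because $\on{Hom}(\Z/2\Z,\Z/4\Z)\to\on{Hom}(\Z/2\Z,\Z/2\Z)$ is the zero map.

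For (3), let $\chi_1$ be the character $a\mapsto1,\ b\mapsto0$ and $\chi_3$ the character $a\mapsto0,\ b\mapsto1$; these span $H^1(\hat G,\Z/2\Z)=(\Z/2\Z)^2$. First I would check that the cup product vanishes identically on $H^1(\hat G,\Z/2\Z)$: for every $\chi$ one has $\chi\cup\chi=0$, since $\chi\cup\chi$ is the Bockstein of $\chi$ for $0\to\Z/2\Z\to\Z/4\Z\to\Z/2\Z\to0$ and $\chi$ lifts to $\Z/4\Z$ by part (1); and $\chi_1\cup\chi_3=0$, since the homomorphism $\varphi\colon\hat G\to U_3$ of (\ref{h-define}) lifts $(\chi_1,\chi_3)$, so \Cref{dwyer}(iv) with $n=2$ applies. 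Bilinearity and the (mod $2$) symmetry of the cup product then give $\cup\equiv0$, so in particular every triple Massey product over $\hat G$ is defined. It remains to exhibit a non-vanishing one, and I would take $\ang{\chi_1,\chi_1,\chi_3}$. By \Cref{dwyer}(iv) this vanishes if and only if $(\chi_1,\chi_1,\chi_3)$ lifts to a continuous homomorphism $\hat G\to U_4$, i.e.\ (as $U_4$ is a finite $2$-group) if and only if there exist $A,B\in U_4(\F_2)$ with superdiagonal entries $(1,1,0)$ and $(0,0,1)$ respectively satisfying the defining relation $[A^2,B]=I$ of $G$. A short computation over $\F_2$ gives $A^2=I+E_{13}+rE_{14}$, where $r$ is the $(2,4)$-entry of $A$ and the $(1,3)$-entry of $A^2$ is forced to equal $1$ because the $(1,2)$- and $(2,3)$-entries of $A$ are both $\chi_1(a)=1$; since the $(3,4)$-entry of $B$ is $\chi_3(b)=1$, one obtains $A^2B-BA^2=E_{14}\neq0$ for every choice of the remaining entries of $A$ and $B$. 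Hence no such lift exists, and $\ang{\chi_1,\chi_1,\chi_3}$ is defined but does not vanish. (Consistently, $E_{14}$ is central in $U_4$, so $(\chi_1,\chi_1,\chi_3)$ does lift to $\cl{U}_4$, matching the vanishing of all cup products.)

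The two delicate points will be the Reidemeister--Schreier rewriting in (2) --- in particular verifying that the second translate of the relator contributes nothing new, so that the $\Z/2\Z$-torsion genuinely survives in $H^{\mathrm{ab}}$ --- and the $\F_2$ matrix bookkeeping in (3); the conceptual content of the latter is simply that the relator $[a^2,b]$ of $G$ forces $A^2$ to have nonzero $(1,3)$-entry, and this entry cannot be centralized against the nonzero $(3,4)$-entry of $B$ inside $U_4$.
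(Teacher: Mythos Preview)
Your argument is correct in all three parts, and for (1) it coincides with the paper's. For (2) and (3) you take genuinely different routes.

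For (2), the paper does not compute $H^{\mathrm{ab}}$; instead it exhibits directly the obstructing character $\chi\coloneqq u_{1,3}\circ\varphi|_H$ and the element $h\coloneqq[a,b]=ac^{-1}$, observes $\chi(h)=1$, and notes that for any lift $\widetilde\chi$ to $\Z/4\Z$ one has $\widetilde\chi(h^2)=\widetilde\chi(a^2c^{-2})=0$ (since $c^2=a^2$), forcing $\chi(h)=0$. This is your torsion generator seen ``by hand''; your Reidemeister--Schreier argument is longer but yields the full abelianization $H^{\mathrm{ab}}\cong\Z/2\Z\oplus\Z^2$, which is more information than the paper extracts.

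For (3), the paper works with a different triple, namely $a\mapsto(1,1,1)$, $b\mapsto(1,0,0)$, exhibits an explicit lift to $\cl U_4$, and then, to rule out a lift to $U_4$, invokes an auxiliary lemma describing the $U_4$-conjugacy class of $I+N$ in order to normalize $f(a)=I+N$ before deriving the contradiction. Your choice $a\mapsto(1,1,0)$, $b\mapsto(0,0,1)$ lets you bypass that lemma entirely: because the superdiagonal of $A$ is $(1,1,0)$, one has $A^2=I+E_{13}+rE_{14}$ for \emph{every} admissible $A$, and the single commutator computation $A^2B-BA^2=E_{14}$ finishes the job. Your preliminary step showing that all cup products on $H^1(\hat G,\Z/2\Z)$ vanish (via $\chi\cup\chi=\beta(\chi)$ and the existing $U_3$-lift $\varphi$) is also a clean replacement for the paper's explicit $\cl U_4$-lift, and gives the stronger statement that every triple Massey product over $\hat G$ is defined.
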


\begin{proof}[Proof of \Cref{counterexample-formal-h90}(1) and (2)]
(1) We have $G/[G,G]\simeq \Z\times \Z$, and hence the reduction maps $H^1(G,\Z/2^n\Z)\to H^1(G,\Z/2\Z)$ are surjective for all $n\geq 1$. By the universal property of pro-$2$ completion, the canonical map $G\to\hat{G}$ induces isomorphisms $H^1(\hat{G},\Z/2^n\Z)\to H^1(G,\Z/2^n\Z)$ for all $n\geq 1$. It follows that the reduction maps $H^1(\hat{G},\Z/2^n\Z)\to H^1(\hat{G},\Z/2\Z)$ are also surjective for all $n\geq 1$.

(2)  We have $N'\simeq (\Z/2\Z)^2$ and the map $u_{1,3}\colon N'\to \Z/2\Z$ is a homomorphism. Define $\chi\coloneqq u_{1,3}\circ \varphi|_H \in H^1(H,\Z/2\Z)$. Suppose by contradiction that $\chi$ lifts to some $\widetilde{\chi}\in H^1(H,\Z/4\Z)$.

Let $h\coloneqq aba^{-1}b^{-1}\in H$. Then $\varphi(h)=\tau$ in $N'$, and hence $\chi(h)=1$ in $\Z/2\Z$. Note that $a$ belongs to $H$, and let $c\coloneqq bab^{-1}\in H$, so that we have the identities $h=ac^{-1}$ and $c^2=ba^{2}b^{-1}=a^2$ in $H$. Since the group $\Z/4\Z$ is abelian, we have $\widetilde{\chi}(h^2)=\widetilde{\chi}(a^2c^{-2})=0$ in $\Z/4\Z$.
It follows that $\widetilde{\chi}(h)$ belongs to $2\Z/4\Z$, and hence $\chi(h)=0$, a contradiction.
\end{proof}

\begin{lemma}\label{i+n}
    Let $p$ be a prime and $n\geq 3$ be an integer. Let $N\in \on{Mat}_{n+1}(\F_p)$ be such that $N_{i,j}=1$ if $j=i+1$ and $N_{i,j}=0$ elsewhere. 

    (1) The $U_{n+1}$-centralizer of $I+N$ consists of all upper unitriangular matrices which are constant on all upper diagonals.
    
    (2) The $U_{n+1}$-conjugacy class of $I+N$ consists of all upper unitriangular matrices $M$ with entry $1$ at $(i,i+1)$ for all $i$.
\end{lemma}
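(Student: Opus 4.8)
The plan is to reduce both statements to elementary matrix computations. First observe that a matrix $g$ commutes with $I+N$ if and only if it commutes with $N$, and that $N$ acts on the standard basis $e_1,\dots,e_{n+1}$ of $\F_p^{n+1}$ as the regular nilpotent shift: $Ne_1=0$ and $Ne_j=e_{j-1}$ for $j\geq 2$.

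For part (1), I would take a general upper triangular $g=(g_{ij})_{i\leq j}$ and expand the identity $gN=Ng$ entry by entry. Using $E_{ab}E_{cd}=\delta_{bc}E_{ad}$ one finds $(gN)_{ij}=g_{i,j-1}$ and $(Ng)_{ij}=g_{i+1,j}$, with the usual conventions that $g_{ab}=0$ when $a>b$ or when an index leaves $\{1,\dots,n+1\}$. Thus $gN=Ng$ amounts to the system $g_{i,j-1}=g_{i+1,j}$ for all $i,j$, and reading this relation along a fixed upper diagonal shows exactly that each upper diagonal of $g$ is constant; conversely every upper triangular matrix constant on its diagonals commutes with $N$. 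Since membership in $U_{n+1}$ forces the main diagonal to be all $1$'s, the centralizer of $I+N$ is $\{I+c_1N+\dots+c_nN^n:c_i\in\F_p\}$, i.e. the upper unitriangular matrices constant on all upper diagonals; in particular it has order $p^n$, a fact that will be reused in part (2).

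For part (2), one inclusion is direct: if $M=g(I+N)g^{-1}$ with $g\in U_{n+1}$, then $M$ is upper unitriangular and $M-I=gNg^{-1}$; writing $g=I+A$ and $g^{-1}=I+B$ with $A,B$ strictly upper triangular and expanding $gNg^{-1}=N+AN+NB+ANB$, a short index check shows that $AN$, $NB$ and $ANB$ all have vanishing $(i,i+1)$-entry, so $(M-I)_{i,i+1}=N_{i,i+1}=1$. Hence every $U_{n+1}$-conjugate of $I+N$ lies in the set $X$ of upper unitriangular matrices with $1$ at each position $(i,i+1)$. For the reverse inclusion I would count: the free entries of a member of $X$ are those strictly above the first superdiagonal, so $|X|=p^{\binom n2}$, while by part (1) and the orbit--stabilizer theorem the $U_{n+1}$-conjugacy class of $I+N$ has size $|U_{n+1}|/p^n=p^{\binom{n+1}2-n}=p^{\binom n2}$; since this orbit is a subset of the finite set $X$ of the same cardinality, the two coincide. (Alternatively, given $M\in X$ one can set $L\coloneqq M-I$, check by induction that $L^m e_{n+1}\equiv e_{n+1-m}$ modulo $\langle e_1,\dots,e_{n-m}\rangle$, and then verify that the matrix whose $j$-th column is $L^{n+1-j}e_{n+1}$ lies in $U_{n+1}$ and conjugates $M$ to $I+N$, using $L^{n+1}=0$.)

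The computations involve no conceptual difficulty; the one place demanding care is keeping the index ranges and the below-diagonal/out-of-range conventions consistent in the entrywise identities of part (1), since both parts are ultimately deduced from them. I expect no genuine obstacle beyond this bookkeeping.
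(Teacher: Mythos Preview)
Your proof is correct and follows essentially the same approach as the paper: both argue part (1) via the entrywise identity $gN=Ng$, and both settle part (2) by orbit--stabilizer counting against the set of matrices with $1$'s on the first superdiagonal. The only cosmetic difference is that the paper obtains the easy inclusion in (2) by passing to the abelianization $U_{n+1}/[U_{n+1},U_{n+1}]$ rather than by your direct expansion of $gNg^{-1}$.
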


\begin{proof}
(1) A matrix $M$ is in the $U_{n+1}$-centralizer of $I+N$ if and only if $MN=NM$. This is equivalent to $M_{i,j+1}=M_{i-1,j}$ for all $i,j$.

(2)    The size of the $U_{n+1}$-conjugacy class of $I+N$ is equal to the index of the centralizer of $I+N$ in $U_{n+1}$. The order of $U_{n+1}$ is $p^{\frac{n(n+1)}{2}}$. By (1), the order of the centralizer of $I+N$ is equal to $p^n$, thus the size of the $U_{n+1}$-conjugacy class of $I+N$ is equal to $p^{\frac{n(n-1)}{2}}$.

If $M$ is conjugate to $I+N$, then the image of $M$ in $U_{n+1}/[U_{n+1},U_{n+1}]$ is equal to the image of $I+N$. This implies that $M_{i,i+1}=(I+N)_{i,i+1}=1$ for all $i$. This restricts the possible conjugates of $I+N$ to at most $p^{\frac{n(n-1)}{2}}$ elements, hence by the previous paragraph they must form the conjugacy class of $I+N$.
\end{proof}

\begin{proof}[Proof of \Cref{counterexample-formal-h90}(3)]
Let
    \[
    A\coloneqq I+N=\begin{bmatrix}
        1 & 1 & 0 & 0\\
        0 & 1 & 1 & 0 \\
        0 & 0 & 1 & 1 \\
        0 & 0 & 0 & 1
    \end{bmatrix}\in U_4,
    \]
    so that
    \[A^2=I+N^2=\begin{bmatrix}
        1 & 0 & 1 & 0\\
        0 & 1 & 0 & 1 \\
        0 & 0 & 1 & 0 \\
        0 & 0 & 0 & 1
    \end{bmatrix}.\]
    We write $\cl{A}$ for the image of $A$ in $\cl{U}_4$.
    Let 
    \[B=\begin{bmatrix}
        1 & B_{1,2} & B_{1,3} & B_{1,4}\\
        0 & 1 & B_{2,3} & B_{2,4} \\
        0 & 0 & 1 & B_{3,4} \\
        0 & 0 & 0 & 1
    \end{bmatrix}\in U_4.
    \]
    Then
    \[BN^2=\begin{bmatrix}
        0 & 0 & 1 & B_{1,2}\\
        0 & 0 & 0 & 1 \\
        0 & 0 & 0 & 0 \\
        0 & 0 & 0 & 0
    \end{bmatrix},\qquad
    N^2B=\begin{bmatrix}
        0 & 0 & 1 & B_{3,4}\\
        0 & 0 & 0 & 1 \\
        0 & 0 & 0 & 0 \\
        0 & 0 & 0 & 0
    \end{bmatrix}.
    \]
    Since $A^2=I+N^2$, we deduce that:

    (i) $\cl{A}^2$ belongs to the center of $\cl{U}_4$, and
    
    (ii) $A^2$ commutes with $B$ in $U_4$ if and only if $B_{1,2}=B_{3,4}$.

     In view of \Cref{dwyer} and the universal property of pro-$2$ completion, in order to prove (3) it suffices to find a homomorphism $G\to (\Z/2\Z)^3$ which lifts to $\cl{U}_4$ but not to $U_4$. If $S$ is a group, to give a homomorphism $G\to S$ is the same as giving two elements $s_a,s_b\in S$ such that $s_bs_a^2=s_a^2s_b$. 

    Consider the homomorphism $\chi\colon G\to (\Z/2\Z)^3$ given by sending $a$ to $(1,1,1)$ and $b$ to $(1,0,0)$. 
    By (i), sending
    \[
    a\mapsto \cl{A},\qquad b\mapsto \begin{bmatrix}
        1 & 1 & 0 & \square\\
        0 & 1 & 0 & 0 \\
        0 & 0 & 1 & 0 \\
        0 & 0 & 0 & 1
    \end{bmatrix},
    \]
    defines a lift of $\chi$ to $\cl{U}_4$.
    
    Suppose that $\chi$ lifts to a group homomorphism $f\colon G\to U_4$. By \Cref{i+n}(2), $f(a)$ is $U_4$-conjugate to $A$, and so, replacing $f$ by a $U_4$-conjugate if necessary, we may suppose that $f(a)=A$. On the other hand, we have
    \[f(b)=\begin{bmatrix}
        1 & 1 & x_1 & x_3\\
        0 & 1 & 0 & x_2 \\
        0 & 0 & 1 & 0 \\
        0 & 0 & 0 & 1
    \end{bmatrix},
    \]
    for some $x_1,x_2,x_3\in \Z/2$. Since $f(b)_{1,2}=1\neq 0=f(b)_{3,4}$, we deduce from (i) that $f(b)$ does not commute with $A^2=f(a^2)$, contradicting the fact that $b$ commutes with $a^2$. Therefore $\chi$ does not lift to $U_4$.
\end{proof}

\newcommand{\etalchar}[1]{$^{#1}$}

\end{document}